\tikzset{
	treenode/.style = {shape=rectangle, rounded corners,
		draw, align=center,
		top color=white,
		bottom color=blue!20},
	root/.style     = {treenode, font=\Large,
		bottom color=red!30},
	env/.style      = {treenode, font=\ttfamily\normalsize},
	dummy/.style    = {circle,draw}
}
\definecolor{hred}{rgb}{1,0,0}
\definecolor{hivory}{rgb}{1,1,0.94}
\definecolor{hdarkblueblack}{RGB}{30,30,40}
\definecolor{hdarkgrayblack}{RGB}{35,35,35}
\definecolor{hdarkgrayblack}{RGB}{30,30,30}
\definecolor{hdarkgrayblack}{RGB}{25,25,25}
\newtheorem{theorem}{Theorem}[section]
\newtheorem{lemma}[theorem]{Lemma}
\newtheorem{proposition}[theorem]{Proposition}
\newtheorem{corollary}[theorem]{Corollary}
\newtheorem{definition}{Definition}[section]
\newtheorem{assumption}[theorem]{Assumption}
\theoremstyle{remark}
\newtheorem{remark}{Remark}[section]
\numberwithin{equation}{section}
\numberwithin{theorem}{section}
\newcommand{\calA}{{\mathcal{A}}}
\newcommand{\calD}{{\mathcal{D}}}
\newcommand{\calL}{{\mathcal{L}}}
\newcommand{\calM}{{\mathcal{M}}}
\newcommand{\calG}{{\mathcal{G}}}
\newcommand{\calE}{{\mathcal{E}}}
\newcommand{\calV}{{\mathcal{V}}}
\newcommand{\calF}{{\mathcal{F}}}
\newcommand{\BA}{\mathbb{A}}
\newcommand{\BBA}{{\bf{A}}}
\newcommand{\BR}{\mathbb{R}}
\newcommand{\BV}{\mathbb{V}}
\newcommand{\BX}{\mathbb{X}}
\newcommand{\BBX}{{\bf{X}}}
\newcommand{\BF}{\mathbb{F}}
\newcommand{\BFF}{\mathfrak{F}}
\newcommand{\BBF}{{\bf{F}}}
\newcommand{\BB}{\mathbb{B}}
\newcommand{\BBB}{{\bf{B}}}
\newcommand{\BT}{\mathbb{T}}
\newcommand{\BU}{\mathbb{U}}
\newcommand{\BP}{\mathbb{P}}
\newcommand{\BBP}{{\bf{P}}}
\newcommand{\BS}{\mathbb{S}}
\newcommand{\BBS}{{\bf{S}}}
\newcommand{\BK}{\mathbb{K}}
\newcommand{\BCK}{\mathcal{K}}
\newcommand{\BE}{\mathbb{E}}
\newcommand{\BL}{\mathbb{L}}
\newcommand{\BBL}{{\bf{L}}}
\newcommand{\BH}{\mathbb{H}}
\newcommand{\BBH}{{\bf{H}}}
\newcommand{\BBY}{{\bf{Y}}}
\newcommand{\B}[1]{{\bf #1}}
\newcommand{\kk}[1]{_{#1=1}^m}
\def\Tr@nsmogrify#1#2.{\expandafter\newcommand\csname #1#2\endcsname
	{\mathchardef\Tr@ns@temp=\mathcode\lccode`#1\relax
		\mathcode\lccode`#1=\mathcode`#1\lowercase{\csname#1#2\endcsname}%
		\mathcode\lccode`#1=\Tr@ns@temp\relax}}
\begin{document}
	\title{Optimal boundary control of the isothermal semilinear Euler equation for gas dynamics on a network\footnote{M.B and M.H. acknowledge support by the Berlin Cluster of Excellence Math+ under project AA4-3; and M.H. further acknowledges support by the DFG via SFB-TRR 154 project B02}}
	
	\author{
		Marcelo Bongarti$^1$ and Michael Hinterm\"{u}ller$^{1,2}$
	}
	\date{\small %
		$^1$Weierstrass Institute for Applied Analysis and Stochastics, Berlin, Germany\\
        $^2$Humboldt-Universit\"at zu Berlin, Germany\\%
		\	\\
		\today
	}
	\maketitle
	\begin{abstract} 
		The analysis and boundary optimal control of the nonlinear transport of gas on a network of pipelines is considered. The evolution of the gas distribution on a given pipe is modeled by an isothermal semilinear compressible Euler system in one space dimension. On the network, solutions satisfying (at nodes) the so called Kirchhoff flux continuity conditions are shown to exist in a neighborhood of an equilibrium state. The associated nonlinear optimization problem then aims at steering such dynamics to a given target distribution by means of suitable (network) boundary controls while keeping the distribution within given (state) constraints. The existence of local optimal controls is established and a corresponding Karush-Kuhn-Tucker (KKT) stationarity system with an almost surely non-singular Lagrange multiplier is derived.
	\end{abstract} 
	\noindent{\bf Keywords:} optimal boundary control, gas dynamics, gas networks, isothermal Euler equation, compressible fluid dynamics, nonlinear hyperbolic PDE's, pointwise state constraints, non-singular Lagrange multiplier

%\tableofcontents

\section{Introduction}

In this paper we are interested in the analysis of optimal control problems for gas transport on a network of pipelines. For the sake of an introductory discussion, let us start by sketching the underlying state system in the simple case where the pipeline system consists of a single pipe only. Further, we state the optimization problem of interest. Once these mathematical formulations are at hand, we will set it in context in view of the currently ongoing energy transition from fossil fuels to all renewable energy sources.

Mathematically, a single pipe can be associated with the domain $\Omega=(0,L)$ where $L>0$ denotes the length of the pipe. Given physical parameters $c, \alpha, \lambda, D$ and $g$, which will be discussed in detail later, for modeling the gas transport in that pipe consider the semilinear hyperbolic initial boundary value problem (IBVP): 
\begin{subnumcases}{\label{iso2ex}}
	\dfrac{1}{c^2}\dfrac{\partial p}{\partial t} + \dfrac{\partial q}{\partial x} = 0, \label{eiso2aex} \\[2mm]
	\dfrac{\partial q }{\partial t} + \dfrac{\partial p}{\partial x} = -\dfrac{\lambda}{2D}\dfrac{q|q|}{p} - \dfrac{g\sin(\alpha)p}{c^2},  \label{eiso2bex} \\[2mm]
	p(0,x) = p_0, \ q(0,x) = q_0(x), \label{iso2cex} \\[2mm]
	p(t,0) = \Phi_1(t), \ q(t,L) = \Phi_2(t). \label{iso2dex}
\end{subnumcases} 
This system is known as the isothermal Euler system for gas dynamics. It describes the evolution in time of a given initial gas pressure $p_0: \Omega \to \BR$ and gas flux distribution $q_0: \Omega \to \BR$ along the fixed pipe $\Omega$. Within the scope of this work, the quantities $\Phi_1, \Phi_2: (0,T) \to \BR$ acting on the boundary of the pipe domain $\Omega$ take the roles of \emph{controls} which can be influenced in order to steer the gas dynamics in a desired way. Note that the system \eqref{eiso2aex}--\eqref{eiso2bex} can be derived from the compressible Euler system for an ideal gas by assuming constant temperature and a subsonic regime (i.e. gas velocity below the speed of sound $c$). For more details on the derivation we refer the interested reader to \cite[ISO2 model]{D-H-L-M-M-T}  and \cite[Sections 1 \& 2]{G-H-H-H}. We also point out that models of the form \eqref{iso2ex} or similar appear in a number of contributions to the literature. In subsequent sections we will however only include references that are very close to our context. For a more general reference we refer here to the monograph \cite{L}. 

Suppose that the {\it states} $p, q: (0,T) \times \Omega \to \BR$ are appropriate solutions of the system \eqref{iso2ex} and the given (non-empty, convex and closed) set $U$ encodes constraints on the controls. Then consider the rather general optimization problem 
\begin{equation}
	\begin{aligned}
	&\text{minimize}  \qquad\quad J(\Phi_1,\Phi_2,p,q) \nonumber \\
	&\text{subject to (s.t)}  \ \Phi_i \in U, i=1,2; \text{and }p,q\text{ solve } \eqref{iso2ex} ; \nonumber \\
	&\phantom{\text{subject to (s.t)}} \ k_p \leqslant p(t,x) \leqslant K_p \ \mathrm{and} \ k_q \leqslant q(t,x) \leqslant K_q\text{ for almost every }(t,x)\in(0,T)\times\Omega.
	\end{aligned}
	\tag{${P}$}\label{minexp}
\end{equation}
where $J$ is an objective functional that one wishes to minimize. To ease our exposition, in this work we take $J$ as the standard tracking-type functional which models the desire to reach a given target state over time while keeping the (average) cost of the control low. For more details see below. We also mention here that more general objectives can be used as long as they fulfill certain requirements. Further, $k_p, k_q, K_p, K_q \in \BR$ with $k_p<K_p$ and $k_q<K_q$ yield point-wise almost everywhere bound constraints on the states $p,q$.

In view of this optimization task, one of the goals of this paper is to investigate the well-posedness of \eqref{minexp} on a network of $m$ pipes, allowing non-trivial controls to act only at the \textit{boundary} of such a network. Concerning details on the latter notion we refer to the discussion in the  following section. At internal network nodes the Kirchhoff law is imposed in order to balance inflow and outflow at such nodes. Once optimal solutions of \eqref{minexp} are guaranteed, a natural next goal of this work is to derive a suitable stationarity system for characterizing such solutions via first-order conditions.

In connection with these research goals, one of the main challenges in our present study is associated with the analysis of the underlying state system which consists of nonlinear hyperbolic partial differential equations (PDEs) coupled via the Kirchhoff law on a network. Since pipes are connected by joints and in view of the dynamics of \eqref{minexp}, one needs to pay special attention to the structure of solutions and their regularity. This entails a careful choice of function spaces to enable suitable solution concepts. Another challenge stems from the non-linearity of \eqref{iso2ex} which renders the constrained problem \eqref{minexp} non-convex, even if $U$ is convex. As a consequence, both the proof of existence of optimal controls as well as their characterization via first-order systems require a careful analysis. 

Next we connect the above mathematical setting to the wider application context of implementing a transition strategy from a current (often fossil fuel prone) energy portfolio to an (ideally) all renewable one.

In fact, natural gas still plays a central role in the current European energy scenario. The so-called \emph{European Green Deal} \cite{EC2} has set a net zero greenhouse gas emissions target for the year 2050. In this context, natural gas is the key common factor in all proposed transition strategies (from fossil to renewable energy sources) \cite{EC1}. It is transported through large pipeline networks whose complexity poses a challenge to currently known modeling and analysis techniques. The intricacy of this transmission system encompasses much more than just the various hardware structures in a given network \cite{K-H-P-S} --- which are generally difficult to represent as abstract mathematical objects and concepts that can be dealt with. Apart from the fact that the partial differential equations (PDEs) governing the dynamics of gas transport are nonlinear and hyperbolic, the understanding of gas markets is constantly confronted by the rapidly evolving European Commission's gas policies. Such changes are often interpreted as (\textit{state, operator}) constraints in regards to the structure of the gas network as well as limitations imposed by legislation. The latter usually changes the gas market's dynamics by adding or removing agents (buyers, sellers, etc) or by changing their logistics. An example of such change is the replacement of the point-to-point transport routes with the entry-exit system for capacity booking and the establishment of a \textit{virtual trading point}, which allows for transactions between agents that not necessarily have direct physical access to gas volumes \cite{H-H-A}.

In a realistic gas market, one has to deal with non-cooperative agents pursuing specific objectives subject to general (global) as well as individual (private) constraints. One example for such a global constraint would be the above model of the gas flow in a pipe (or a pipeline network, more generally). The controls may then be interpreted as the agents' decision variables; in \eqref{iso2ex}, for instance, $\Phi_1$ may belong to one agent (producer) and $\Phi_2$ to another agent (wholesaler). Then, $U$ in \eqref{minexp} would encode private constraints affecting the agents.

This leads to modeling such a gas market as non-cooperative game. More precisely we will call this a generalized Nash equilibrium problem (GNEP); see, for instance, \cite{E-G-H-Z,E-G2,G-H-H-S-S-Z,H-H-A,J-vL-vW-vO}. We also refer to the seminal work \cite{N}, or \cite{A-D,F-K} for instance, for further references. Note that the descriptor ''generalized'' is used here to emphasize that each agent's optimization problem has a feasible set that depends on the decisions of its competitors.  Establishing the existence and a (first-order) characterization of associated solutions (so-called Nash equilibria) for GNEPs are typically challenged when the agents' individual problems are non-convex or when the agents' problems are posed in infinite dimensional settings. The latter is for instance the case when the above PDE model of the gas network becomes a constraint. Let us point out that GNEPs with PDE constraints are a relatively recent problem class in the literature; see e.g. \cite{D-G,G-H-S,G-H-H-S-S-Z,H-S,H-S-K,K-K-S-W,G-P-R,R,R2}. In view of our focus on energy networks we mention that the theory in \cite{H-S-K} is applied to study a GNEP for a gas market modeled by a simplified linear PDE for the gas transport. In \cite{G-H-H-S-S-Z}, GNEPs are studied in the context of gas markets where the gas transport is modeled by a linearized and viscosity regularized version of the semilinear Euler equation \eqref{iso2ex}, which is also the model of interest in this paper.

The main study object of this paper is the optimization problem which can be attributed to a single agent in the above GNEP context. It has the structure of \eqref{minexp}, but with the PDE for the gas transport posed on a tree-like network under the Kirchhoff law at interior nodes and possibly more than two controls. Let us further point out that \eqref{minexp} is also of interest independent of the GNEP resp. market context, but rather it would entail optimal boundary control of a (passive) gas network. Such a viewpoint is of interest, e.g., in steering the physical network towards a target state in terms of gas distribution.  

The two major contributions of this paper to the existing literature are the following ones:
\begin{itemize}
	\item[(i)] Based on semigroup theory, we prove the existence of smooth (local) solutions to the semilinear Euler system \eqref{iso2ex} on a tree-like network and with coupling at interior nodes (joints) via the Kirchhoff law. Technically, our approach is based on \cite{L}, but under a weak smallness assumption of the boundary data (controls); here only in the space of continuous functions, rather than in $C^1$.
	\item[(ii)] Exploiting compactness properties of the image space of the control-to-state map, i.e., the solution map for the semilinear isothermal Euler system as a mapping of the boundary controls, existence of optimal controls for \eqref{minexp} for the state system of (i) is established. Moreover, the regularity of our states allows us to derive a first-order optimality condition for characterizing a (local) solution with almost surely bounded Lagrange multipliers.
\end{itemize}

The rest of the paper is organized as follows: In Section \ref{pressec} we introduce the configuration of our state system in detail. We also fix some notation for temporal and spatial regularity spaces and comment on some of their key properties used in this work. In section \ref{main_res} we present (without proofs) and discuss our main results. In this way we hope to make the paper more accessible to the reader. The following sections are then devoted to the associated mathematical proofs. In fact, section \ref{localwell} is concerned with the well-posedness of the state system, and in Section \ref{control} we prove the existence of optimal controls as well as their first-order characterization. 

\section{The semilinear Euler system on a network of pipelines}\label{pressec}

Let $\calG = (\calE, \calV)$ be a finite, directed and connected graph whose underlying undirected graph is a tree. We denote the set of $m$ edges of $\calG$ by $\calE = \{e_1,\cdots,e_m\}$ and the set of its $n$ vertices by $\calV=\{v_1, \cdots, v_n\}.$ In this paper, $\cal{G}$ models the network of pipelines on which the analysis and the optimal boundary control of  gas transport are of interest, respectively. Each edge $e_k$ represents a pipe and each vertex $v_k$ is either a junction or a boundary node, and since $\cal{G}$ is a directed graph, we identify the \textit{start} and \textit{end} points of a given pipe $e_k$ by $0_k$ and $l_k$, respectively.

We decompose the set $\calV$ of vertices into the following three distinct subsets: entry (source or provider) vertices, denoted by $\calV_-^\partial$; exit (sink or customer) vertices, denoted by $\calV_+^\partial$; and interior (junction) nodes, denoted by $\calV^\circ$.  For the analytical description of such subsets, we first distinguish interior and boundary vertices. For this purpose, define $\xi_k: \calV \to \{-1,0, 1\}$ by
$$ \xi_k(v) := 
\begin{cases}
	-1, \ &\mbox{if} \ v = 0_k,\\
	1, \ &\mbox{if} \ v = L_k, \\
	0, \ &\mbox{otherwise}.
\end{cases}
$$
Notice that $\xi_k$ establishes a clear relationship between a given vertex $v$ and every pipe of the network. Since inner nodes are connected to more than one pipe, by introducing the set $\kappa(v) \coloneqq \{e_k \in \calE: \xi_k(v) \neq 0\}$ we can identify inner and boundary vertices via
\begin{equation}\label{innerbound}
	\calV^\circ = \{v \in \calV; |\kappa(v)|>1\}, \qquad \calV^\partial = \calV \setminus {\calV^\circ}.
\end{equation} 
Entry and exit vertices can also be characterized by $\xi_k$. Indeed, we have 
\begin{equation}\label{entryexit}
	\calV_\pm^\partial \coloneqq \{v \in \calV^\partial: \xi_k(v) = \pm 1 \ \mbox{for some } k\}.
\end{equation} 
An example of a network as described above is shown in Figure \ref{net1}. Also, we refer to \cite{G-H2} for a similar description of such networks. 
\begin{figure}[t]
	\centering
	\begin{tikzpicture}[scale=.5, transform shape]
		\node [circle, draw] (a) at (0, 1) {$v_2$} ;
		\node [circle, draw,color=blue] (g) at (1, 5) {$v_3$} ;
		\node [circle, draw,color=blue] (b) at (4, -1) {$v_1$} ;
		\node [circle, draw] (c) at (4, 3) {$v_4$} ;
		\node [circle, draw] (d) at (8, 1) {$v_5$} ;
		\node [circle, draw,color=red] (e) at (8, 5) {$v_6$} ;
		\node [circle, draw,color=red] (f) at (13, 0) {$v_7$} ;
		\draw [->,line width=1] (g) -- (c) node[pos=.5,above] {$e_3$} ;
		\draw [->,line width=1] (b) -- (a) node[pos=.5,above] {$e_1$} ;
		\draw [->,line width=1] (a) -- (c) node[pos=.5,above] {$e_2$};
		\draw [->,line width=1] (c) -- (d) node[pos=.5,above] {$e_4$};
		\draw [->,line width=1] (c) -- (e) node[pos=.5,above] {$e_5$};
		\draw [->,line width=1] (d) -- (f) node[pos=.5,above] {$e_6$};
	\end{tikzpicture}
	\caption{\footnotesize Example of a network with $m = 6$ and $n = 7$. Notice that in this case $\calV^\circ = \{v_2, v_4, v_5\}$, $\calV_-^{\partial} = \{v_1, v_3\}$ and $\calV_=^\partial = \{v_6,v_7\}.$}\label{net1}
\end{figure}
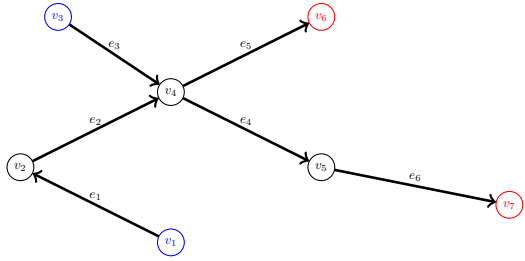 

\

A pipe $e_k$ in $\calG$ is assumed to have length $L_k > 0$, and it is considered to be cylindrical with a circular cross section of diameter $D_k>0$. We also consider its associated friction coefficient $\lambda_k \geqslant 0$ and its inclination $\sin(\alpha_k), \alpha_k \in (-\pi/2, \pi/2)$. It is typical to assume $L_k \gg D_k$ such that the gas dynamics are well described by one dimensional models \cite{D-H-L-M-M-T}.  For each time instance $t \geqslant 0$, the gas pressure and flux are functions from the finite interval $[0,L_k]$ to $\mathbb{R}$, respectively. As motivated in the introduction, the evolution of these distributions is assumed to obey the isothermal semilinear Euler system. We restate this system here, but now with a focus on introducing suitable initial and boundary data (for pipes connected to entry or exit vertices) and continuity conditions (for junctions). 

For a fixed time horizon $T>0$, define the open interval $\Omega_k \coloneqq (0,L_k)$ with product $\Omega \coloneqq \prod_{k=1}^m \Omega_k$, and let $Q_T^k, Q_T$ denote the cylinders $Q_T^k \coloneqq (0,T) \times \Omega_k$ and $Q_T \coloneqq (0,T) \times \Omega$. On the $k^{\tiny\mbox{th}}$-pipe, the Euler system relates the quantities, pressure $p^k = p^k(t,x)$ and flux $q^k = q^k(t,x)$ 
via the equations 
\begin{subnumcases}{\label{ksys}}
	\dfrac{\partial p^k}{\partial t} + c^2\dfrac{\partial q^k}{\partial x} = 0 & a.e. in $Q_T^k$, \label{iso2na} \\[2mm]
	\dfrac{\partial q^k }{\partial t} + \dfrac{\partial p^k}{\partial x} + \gamma_k p^k= -\beta_k\dfrac{q^k|q^k|}{p^k}  & a.e. in $Q_T^k$,\label{iso2nb} 
\end{subnumcases} 
with
\begin{equation}
	\label{betagamma} \beta_k \coloneqq \dfrac{\lambda_k}{2D_k}, \qquad \gamma_k \coloneqq \dfrac{g\sin(\alpha_k)}{c^2},
\end{equation} 
where $c$ denotes the sound speed, and $g$ is the acceleration of gravity. Here and below, 'a.e.' stands for 'almost everywhere' in the sense of the Lebesgue measure.

We shall assume that initially at time $t = 0$ the pressure and flux distributions in each pipe are given by an equilibrium or steady state (SS) solution of the corresponding counterpart of the Euler system. This assumption is standard in the context of boundary control of compressible Euler equations; see, for instance, \cite{G-H} and \cite{H-S2}. To compute the SS solution, we assume that the pressure is known at the entry vertex of each pipe and collect this information in the vector $\B{p}_{\mathrm{e}}^{\mathrm{in}} = (p_{\mathrm{\mathrm{in}}}^k)\kk{k}$. The constant SS mass flux vector is denoted by $\B{q}_{\mathrm{e}} = (q_{\mathrm{e}}^k)\kk{k}$ and the SS pressure $\B{p}_{\mathrm{e}} = (p_{\mathrm{e}}^k)\kk{k}$ is given, in each pipe, by 
$$p_{\mathrm{e}}^k(x) = \sqrt{ e^{-2\gamma_k x}\left[(p_{\mathrm{\mathrm{in}}}^k)^2 -\dfrac{\beta_k (q_{\mathrm{e}}^k)^2}{\gamma_k} (e^{2\gamma_k x}-1)\right]}, \qquad x \in [0,L_k).$$
 We define $p_{\mathrm{\mathrm{out}}}^k \coloneqq p_{\mathrm{e}}^k(L_k), \B{p}_{\mathrm{e}}^{\mathrm{out}} = (p_{\mathrm{\mathrm{out}}}^k)\kk{k}.$ Finally, the SS solution on each pipe is denoted by $\vec{v}_{\mathrm{e}}^k(\cdot) \coloneqq (p_{\mathrm{e}}^k(\cdot), q_{\mathrm{e}}^k(\cdot))$, and on the network by $\B{v}_{\mathrm{e}}: \Omega \to \BR^{2m}$, $\B{v}_{\mathrm{e}} := (\vec{v}_{\mathrm{e}}^k)\kk{k}.$ We shall assume that $\B{p}_{\mathrm{e}}^{\mathrm{in}}$ and $\B{q}_{\mathrm{e}}$ are such that each component of the SS solution is continuously differentiable on its respective domain and $p_{\mathrm{e}}^k$ is monotonically decreasing for all $k$ with $p_{\mathrm{e}}^k(x) \geqslant p_{\mathrm{\mathrm{out}}}^k$ for all $x \in \overline{\Omega_k}=[0,L_k]$. Moreover, we assume $\B{v}_{\mathrm{e}}$ to be compatible with the pressure and flux continuity conditions that we wish solutions to enjoy. This will be further discussed below.

The condition on the \textit{continuity of the flux} serves the fact that at each inner node of the network the amount of gas that streams in needs to flow out again. We model this by imposing the so-called \textit{Kirchhoff} condition at each node. In our context the latter reads
\begin{equation}\label{kirchoff} 
	\sum\limits_{k = 1}^m \xi_k(v)D_k^2q^k(v,t) = 0, \qquad v \in \calV^\circ, \ t \in [0,T).
\end{equation}

Concerning the \textit{continuity of the pressure} we require the pressure to be stable at junctions, i.e., equal in all of the pipes meeting at a junction. Mathematically, we require that for any $1 \leqslant j, k \leqslant m$ and $v \in \calV^\circ$ such that both $\xi_k(v), \xi_j(v)$ are nonzero 
\begin{equation}
	\label{pressurecont} p^j(v,t) = p^k(v,t),\qquad\text{for all }t \in [0,T).
\end{equation} 

The main goal of this paper is to study how controls acting on the boundary of the network can drive the SS solution (assumed to be the state at $t=0$) to a given target distribution while satisfying fixed state constraints. We are going to assume that the pressure is controlled at the set $\calV_-^\partial$ of entry vertices and the flux is controlled at the set $\calV_+^\partial$ of exit vertices. That is, $p^k(\cdot,0)$ (if $0_k \in \calV_-^\partial$) and $q^k(\cdot,L_k)$ (if $l_k \in \calV_+^\partial$) are given functions on $(0,T)$ of appropriate regularity. Again, this structure (pressure controlled at the entry and flux at the exit) is not new in the context of the (optimal) control of gas transport; see \cite{D-L}, for instance.

{\bf Notation.} We now introduce the notation which we are going to use throughout the paper. 

For an open set $O \subset \BR$ and $1 \leqslant p < \infty$, $L^p(O)$ denotes the space of Lebesgue measurable functions whose absolute value to the $p^{\tiny \mbox{th}}$-th power is Lebesgue integrable. For $p = \infty$, the set $L^\infty(\Omega)$ denotes the set of Lebesgue measurable functions which are also almost everywhere bounded in $O$. When equipped with the norm 
\begin{equation*} 
	\|u\|_{L^p(O)} \coloneqq 
	\begin{cases} 
		\left(\displaystyle\int_O |p(x)|^pdx\right)^{1/p} & \ \mbox{for} \ 1\leqslant p < \infty, \\[2mm] \mbox{ess}\sup\limits_{x \in O} |u(x)|, & \ \mbox{for} \ p = \infty, 
	\end{cases}
\end{equation*} 
$L^p(O)$ is a Banach space. In the case $p=2$, $L^2(O)$ is a Hilbert space if equipped with the standard inner product $$(u,v)_{L^2(O)} \coloneqq \int_O u(x)v(x)dx$$. The latter induces the norm $\|\cdot\|_{L^2(O)}$. For a non-negative integer $s$ we denote by $W^{s,p}(O)$ the Sobolev space of $L^p(O)$ functions whose distributional derivatives up to order $s$ are also in $L^p(O)$. With the norm
\begin{equation*}
	\|u\|_{W^{s,p}(O)} \coloneqq 
	\begin{cases} \left(\sum\limits_{\alpha \leqslant s}\|D^\alpha u\|_{L^p(O)}^p\right)^{1/p} & \ \mbox{for} \ 1\leqslant p < \infty, \\[2mm] \sum\limits_{\alpha \leqslant s} \|D^\alpha u\|_{L^\infty(O)}, & \ \mbox{for} \ p = \infty, \end{cases}
\end{equation*} 
$W^{s,p}(O)$ (for $s \in \mathbb{N} \cup \{0\}$, $1 \leqslant p \leqslant \infty$) is a Banach space. In the case $p = 2$, the associated spaces are also Hilbert spaces. It is then common to write $H^s(O)$ instead of $W^{s,2}(O)$. The inner product $$(u,v)_{H^s(O)} = \sum\limits_{\alpha \leqslant s} (D^\alpha u, D^\alpha v)_{L^2(O)}$$ induces the norm $\|\cdot\|_{H^s(O)}$. 

We denote by $C(\overline O)$ the set of functions $u: O \to \BR$ which are continuous on $O$ and can be extended continuously to $\partial O$, the boundary of $O$. In the particular case where $O = (a,b)$, with $a,b\in\mathbb{R}$ and $a<b$, we denote by $\calM(O)$ the dual of $C(\overline O)$. It is well known that $\calM(O)$ can be identified with the set of regular and finite Borel measures. More specifically (since we are in dimension one), any $l \in \calM(O)$ can be uniquely (up to a countable set) represented by a function $\mu$ of bounded variation and such that $\mu(a) = 0$ and $$l(f) = \int_a^b fd\mu$$ for all $f \in C(\overline O).$ This justifies the duality pairing between $\calM(O)$ and $C(\overline O)$ to be defined as $$\langle \mu, f \rangle_{\calM(O),C(\overline O)} = \int_O fd\mu.$$

 If $I \subset \BR$ is an interval and $X$ a Banach space, then we denote by $C^m(I;X)$ the set of $X$-valued functions $u: I \subset \BR \to X$ which are $m$-times continuously differentiable on $I$. When $I$ is compact, then the space $C^m(I;X)$ when equipped with the norm $$\|u\|_{C^m(I;X)} \coloneqq \sup\limits_{t \in I}\sum\limits_{k=0}^m \|\partial_t^k u(t)\|_X$$ is a Banach space.  

For $1 \leqslant p \leqslant \infty$ we denote by $L^p(I;X)$ the Bochner space of functions $u: I \subset \BR \to X$ such that $t \mapsto \|u(t)\|_X$ belongs to $L^p(I).$ Equipped with the norm 
$$\|u\|_{L^p(I;X)} \coloneqq 
\begin{cases} 
	\left(\displaystyle\int_I \|u(t)\|^pdt\right)^{1/p} & \ \mbox{for} \ 1\leqslant p < \infty, \\[2mm] \mbox{ess}\sup\limits_{t \in I} \|u(t)\|_X, & \ \mbox{for} \ p = \infty, 
\end{cases}$$ 
$L^p(I;X)$ is a Banach space, and if $X'$ denotes the topological dual of $X$ then $\left[L^p(I;X)\right]' = L^q(I;X')$ where $pq = p+q$ for $1<p,q<\infty$. If $X$ is a Hilbert space then $L^2(I;X)$ is a Hilbert space when equipped with the inner product $$(u,v)_{L^2(I;X)} = \int_I (u(t),v(t))_{X}dt.$$ Similarly, $W^{s,p}(I,X)$ denotes the space of functions in $L^p(I,X)$ whose distributional derivatives up to order $s$ are also in $L^p(I,X).$ With the norm 
\begin{equation*}
	\|u\|_{W^{s,p}(I,X)} \coloneqq 
	\begin{cases} 
		\left(\sum\limits_{\alpha \leqslant s}\|D^\alpha u\|_{L^p(I,X)}^p\right)^{1/p} & \ \mbox{for} \ 1\leqslant p < \infty, \\[2mm] \sum\limits_{\alpha \leqslant s} \|D^\alpha u\|_{L^\infty(I,X)}, & \ \mbox{for} \ p = \infty, 
	\end{cases}
\end{equation*}
When $p = 2$ and $X$ is a Hilbert space, the Hilbert space $W^{s,2}(I,X)$ is denoted by $H^s(I,X)$ and the inner product is given by  $$(u,v)_{H^s(I,X)} = \sum\limits_{\alpha \leqslant s} (D^\alpha u, D^\alpha v)_{L^2(I,X)}.$$

Since, in our application context, on each pipe the description of the evolution is given by two functions --- pressure and flux --- and we have $m$ pipes, we are led to work with Cartesian products of Lebesgue and Sobolev spaces. To ease notation, we use \emph{bold}  typeface ($\BBL, \BBH, $ etc) to denote the products on each pipe and \emph{double-struck} typeface ($\BL, \BH, $ etc) when extending it to the network. 

For the sake of convenient reference let us state the following definition where we write $x_k\in (0,L_k)=\Omega_k$ in order to refer to the space variable along the $k$th pipe.
\begin{definition}\label{solution} 
	We say that the vector-valued function $\B{v}: (0,T) \times \Omega \to \BR^{2m}$ is a solution of the isothermal semilinear Euler system if for each $(t,\B{x}) \in (0,T) \times \Omega$, $\B{v}$ has the form $\B{v}(t,\B{x}) = (\vec{v}^k(t,x_k))\kk{k}$, $\B{x} = (x_k)\kk{k}$ with $\vec{v}^k = (p^k, q^k)$ and 
	\begin{itemize}
		\item[\bf (i)] each $\vec{v}^k$ solves the corresponding system \eqref{ksys};
		\item[\bf (ii)] the initial, boundary and continuity conditions are satisfied.
	\end{itemize}  
\end{definition} 

Later in the text we will also specify in which sense $\vec{v}^k$ is required to solve the PDE on the $k$th pipe. For the time being, we are merely interested in structural aspects of our setting. Below we will also use $\BBL_k^2(\Omega_k) \coloneqq L^2(\Omega_k) \times L^2(\Omega_k)$ with the norm $$\|\vec{w}\|_{\BBL_k^2(\Omega_k)}^2 := D_k^2\left(\|w_1\|_{L^2(\Omega_k)}^2 + c^2\|w_2\|_{L^2(\Omega_k)}^2\right),$$ and $\BBH_k^1(\Omega_k) \coloneqq H^1(\Omega_k) \times H^1(\Omega_k)$ equipped with the norm $$\|\vec{w}\|_{\BBH_k^1(\Omega_k)}^2 \coloneqq \|\vec{w}\|_{\BBL_k^2(\Omega_k)}^2 + \|\nabla \vec{w}\|_{\BBL_k^2(\Omega_k)}^2.$$ On the network, we let $\BL^2(\Omega) \coloneqq 
\prod_{k=1}^m \BBL_k^2(\Omega_k)$ 
and $\BH^1(\Omega) \coloneqq 
\prod_{k=1}^m \BBH_k^1(\Omega_k)$ 
both endowed with the induced Euclidean norm. 

In view of regularity in time, given two Banach spaces $X, Y$, with $X$ continuously embedded in $Y$, i.e., $X \hookrightarrow Y$ we define
\begin{equation*}
	\BBX_{T}(X,Y) \coloneqq  C([0,T];X) \cap C^1([0,T], Y),
\end{equation*} with the norm $\|\vec{w}\|_{\BBX_T} \coloneqq \sup_{t \in [0,T]} \left(\|\vec{w}(t,\cdot)\|_{X} + \|\vec{w}_t(t,\cdot)\|_{Y}\right)$,
and for $1 \leqslant p \leqslant \infty$ 
\begin{equation}\label{YXY}
	\BBY_{T}^p(X,Y) \coloneqq  L^p(0,T;X) \cap H^1(0,T; Y),
\end{equation} 
with the norm $\|\vec{w}\|_{\BBY_T^p} \coloneqq \|\vec{w}(\cdot,t)\|_{L^p(0,T;X)} + \|\vec{w}_t(\cdot,t)\|_{L^2(0,T;Y)}$. 

In this paper we will often use the spaces defined above with $X = \BH^1(\Omega)$ and $Y = \BL^2(\Omega)$, or spaces of similar structure. A key observation related to this choice is that, due to the Aubin--Lions lemma \cite[Corollary 4, p. 84]{S} the space $\BBY_T^p \ (p < \infty)$ (resp. $\BBY_T^\infty$) lies in between two spaces contained in $L^p\left(0,T;[C(\overline\Omega)]^2\right)$ (resp. $C\left([0,T] \times \overline{\Omega}^2\right)$). Namely, we have 
\begin{equation}\label{embed2}
	\BBX_T(\BH^1(\Omega),\BL^2(\Omega)) \hookrightarrow \BBY_T^p(\BH^1(\Omega),\BL^2(\Omega)) \hookrightarrow L^p\left(0,T;[C(\overline\Omega)]^2\right),
\end{equation}
with the second embedding being compact and 
\begin{equation}\label{embed}
	\BBX_T(\BH^1(\Omega),\BL^2(\Omega)) \hookrightarrow \BBY_T^\infty(\BH^1(\Omega),\BL^2(\Omega)) \hookrightarrow C\left([0,T] \times \overline{\Omega}^2\right),
\end{equation} 
with the second embedding being again compact. 
We finish this section by defining yet another important space for us used when showing the well-posedness of the semilinear Euler system. Given a compact set $\mathcal{K} \subset \BR^{2m}$, we define for some fixed $\kappa_t,\kappa_\B{x}>0$ 
\begin{equation}\label{compactsmooth}
\BBX_T^\mathcal{K}(\BH^1(\Omega),\BL^2(\Omega)) \coloneqq \left\{\B{v} \in \BBX_T(\BH^1(\Omega),\BL^2(\Omega)): 
\begin{matrix}
	\quad\:\B{v}(t,\B{x}) \in \mathcal{K} \ \mbox{for all} \ (t,\B{x}) \in \overline{Q}_T, \\
	\|D_t\B{v}\|_{C([0,T];\BL^2(\Omega))} \leqslant \kappa_t, \\
	\|D_\B{x}\B{v}\|_{C([0,T];\BL^2(\Omega))} \leqslant \kappa_\B{x}
\end{matrix}\right\}.
\end{equation}   

\section{Main results and discussion}\label{main_res}

As indicated in \eqref{minexp}, in our main optimal control problem we work with pointwise a.e \textit{box-} constraints on the state. In this paper, we use $\BCK\subset \BR^{2m}$ to indicate this particular compact set.

With $$\BV_T \coloneqq C\left([0,T]; \prod\limits_{k = 1}^m C\left( \overline{\Omega_k} \right) \times C\left( \overline{\Omega_k}\right)\right),$$ and given a compact $\mathcal{K} \subset \BR^{2m}$ we introduce the $\mathcal{K}$--admissible set
\begin{equation}\label{stateconstraint}
	\BV_{T,\BCK}^\mathrm{ad} \coloneqq \{\B{v} \in \BV_T : \B{v}(t,\B{x}) \in {\mathcal{K}} \ \mbox{for all} \ (t,\B{x}) \in \overline{Q}_T\}.
\end{equation} 

Due to the structure of the nonlinearity in the Euler system, the compact set $\BCK$ cannot be arbitrary. This requires us to introduce the notion of \emph{suitable} compact sets.
\begin{definition}\label{suitable}
	Let ${\mathcal{K}} \subset \BR^{2m}$ be a compact set. We say that ${\mathcal{K}}$ is {\bf suitable} for state constraint representation if there exists an element $\B{v}_\mathrm{e}(\B{x}) \in \mathrm{int}(\BCK)$ for all $\B{x} \in \overline\Omega$ and $${\mathcal{K}} \coloneqq \bigcup\limits_{k=1}^m {\mathcal{K}}_k$$ where ${\mathcal{K}}_k \coloneqq {\mathcal{K}}_k^\mathrm{p} \times {\mathcal{K}}_k^\mathrm{q}$ with ${\mathcal{K}}_k^\mathrm{p} \coloneqq [a_k, b_k] \subseteq \BR_+^\ast$, ${\mathcal{K}}_k^\mathrm{q} \coloneqq [c_k,d_k] \subseteq \BR$ and, for each $k$, $a_k + p_{\mathrm{in}}^k \leqslant b_k.$
\end{definition} 

Throughout we invoke the following assumption concerning the initial state.
\begin{assumption}\label{SSnet}
	The vectors $\B{q}_{\mathrm{e}}$ and $\B{p}_{\mathrm{e}}^{in}$ are taken such that $q_{\mathrm{e}}^k, p_{\mathrm{in}}^k,p_{\mathrm{out}}^k> 0$ for all $1 \leqslant k \leqslant m$ and the continuity conditions \eqref{kirchoff} and \eqref{pressurecont} are both satisfied.
\end{assumption} 
We notice that Assumption \ref{SSnet} above is sufficient to guarantee that $\B{v}_{\mathrm{e}}$ is continuously differentiable and each component is, in addition, strictly monotonically decreasing. However, to comply with the state constraints, we also needed to add the interiority condition on $\B{v}_\mathrm{e}$.

Let $\BU \subset [H^2(0,T)]^{2m}$ be defined by
\begin{equation}\label{defU} 
	\BU \coloneqq \left\{\Phi \in [H^2(0,T)]^{2m}: 
	\begin{matrix}
		\Phi_{2k+1} \equiv 0 \ \mathrm{if} \ 0_k \notin \calV_-^\partial, \ 0 \leqslant k \leqslant m-1, \\
		\Phi_{2k}
		\equiv 0 \ \mathrm{if} \ l_k \notin \calV_+^\partial, \ 1 \leqslant k \leqslant m
	\end{matrix}\right\},
\end{equation}  
and let $\BU_0$ be defined as 
\begin{equation}
	\label{defU0} \BU_0 \coloneqq \{\Phi \in \BU: \Phi(0) = 0\}.
\end{equation}
Notice that $\BU_0$ is a subspace of $\BU$ and both are Hilbert spaces when endowed with the inner product of $[H^2(0,T)]^{2m}.$ We now define the set of admissible controls.
\begin{definition}\label{admbc2} 
	Let $\Phi^{\mathrm{e}} = (\Phi_k^{\mathrm{e}})_{k=1}^{2m}$ be defined by
	\begin{equation}\label{phi2}
		\begin{cases}
			\Phi_{2k+1}^{\mathrm{e}}\equiv  
			\begin{cases}
				p_{\mathrm{in}}^{k+1} \ &\mathrm{if} \ 0_k \in \calV_-^\partial, \\
				0 \ &\mathrm{otherwise,}
			\end{cases} \ &\mathrm{for} \ 0 \leqslant k \leqslant m-1, \\
			\Phi_{2k}^{\mathrm{e}} \quad\equiv 
			\begin{cases}
				q_{\mathrm{e}}^k \ \quad&\mathrm{if} \ l_k \in \calV_+^\partial, \\
				0 \ &\mathrm{otherwise,}
			\end{cases} \ &\mathrm{for} \ 1 \leqslant k \leqslant m.
		\end{cases}
	\end{equation}
	We say that $\Phi \in \BU$ belongs to $\BU^\mathrm{ad}$, the set of admissible controls, if 
	\begin{itemize} 
		\item[\bf(i)] $\Phi-\Phi^{\mathrm{e}} \in \BU_0,$ i.e., $\Phi(0) = \Phi^{\mathrm{e}}$, and $$\displaystyle\sum_{k=1}^{2m}\left\|\Phi_k-\Phi_k^{\mathrm{e}}\right\|_{H^2(0,T)}^2 \leqslant \eta^2$$ for a fixed $\eta>0.$
		
		\item[\bf (ii)] $\|\Phi-\Phi^\mathrm{e}\|_{[C([0,T])]^{2m}} \leqslant \kappa_\BU $ with fixed $\kappa_\BU>0$ sufficiently small.
	\end{itemize}  
\end{definition}
The smallness condition on $\kappa_\BU$ in (ii) above will be specified later.
\begin{remark}\label{closedconvexbounded}
	Notice that $\BU^\mathrm{ad}$ is a bounded, closed and convex subset of $[H^2(0,T)]^{2m}$
\end{remark} 
\begin{remark}
	In this paper we work with a so called \emph{passive} network. This means that non-zero controls act only on its boundary. The structure of the set $\BU$ in \eqref{defU} allows for the incorporation of non-passive elements like valves and compressor stations (see \cite{G-H}) -- at least from the abstract point of view -- although their modeling would probably require smoothing techniques in order to impose pointwise state constraints.
\end{remark}
\begin{remark}\label{bcreg} 
	In Section \ref{pressec} we said that the boundary data were given functions of appropriate regularity. For studying the general well-posedness problem as we do in Section \ref{localwell}, boundary conditions are assumed to be fixed. However, when we study the control problem, those become the variable of interest. In any case, by \emph{appropriate} regularity we mean that $\Phi = (\Phi_k)_{k =1}^{2m}$ is defined as 
	\begin{equation}\label{phi4} 
		\begin{cases}
			\Phi_{2k+1} =  
			\begin{cases}
				p^{k+1}(0,\cdot) \ &\mathrm{if} \ 0_k \in \calV_-^\partial, \\
				0 \ &\mathrm{otherwise}
			\end{cases} \ &\mathrm{for} \ 0 \leqslant k \leqslant m-1, \\
			\Phi_{2k} \quad = 
			\begin{cases}
				q^k(L_k,\cdot) \ \quad &\mathrm{if} \ l_k \in \calV_+^\partial, \\
				0 \ &\mathrm{otherwise}
			\end{cases} \ &\mathrm{for} \ 1 \leqslant k \leqslant m,
		\end{cases}
	\end{equation} 
	and belongs to $\BU^\mathrm{ad}.$
\end{remark}
\begin{remark}\label{U0eta}
	Let 
	\begin{equation}
		\BU_0^\eta \coloneqq \left\{\Phi \in \BU_0: \ \|\Phi\|_\BU \leqslant \eta, \ \|\Phi\|_{[C([0,T])]^{2m}} \leqslant \kappa_\BU\right\}
	\end{equation} 
	and notice that $\Phi + \Phi^\mathrm{e} \in \BU^\mathrm{ad}$ for all $\Phi \in  {\BU_0^\eta}$. Moreover, by defining $\phi: {\BU_0^\eta} \to \BU^{\mathrm{ad}}$ as $\phi(\Phi) = \Phi + \Phi^\mathrm{e}$ we see that ${\BU_0^\eta}$ and $ \BU^{\mathrm{ad}}$ are isomorphic.
\end{remark}
Now that all the relevant spaces have been introduced, we present a Sobolev embedding and interpolation theorem in one dimension. The version presented here is a combination of the Theorems 2.5.4 and 2.6.4 in \cite[p. 90 and p. 94, respectively]{K} and \cite[p.19]{L-M}, adapted to our setting.
\begin{proposition}
	For $\Omega := \prod\limits_{k=1}^{m} \Omega_k \times \Omega_k$ and $T>0$ we have that
	\begin{itemize} 
		\item[\bf (i)] the embedding $\BH^1(\Omega) \hookrightarrow \BL^p(\Omega)$ $(1 \leqslant p \leqslant \infty)$ is continuous and the inequality
		\begin{equation}\label{emb_1p}
			\|\B{u}\|_{\BL^p(\Omega)} \leqslant \kappa_{1,p}\|\B{u}\|_{\BH^1(\Omega)},
		\end{equation}
		holds for all $\B{u} \in \BH^1(\Omega).$ 
		\item[\bf (i)] the embedding $\BH^1(\Omega) \hookrightarrow C(\overline\Omega)$ is compact and the inequality
		\begin{equation}\label{emb_1inf}
			\|\B{u}\|_{C(\overline\Omega)} \leqslant \kappa_{1,c}\|\B{u}\|_{\BH^1(\Omega)},
		\end{equation}
		holds for all $\B{u} \in \BH^1(\Omega).$ 
		\item[\bf (iii)] for any $0<\varepsilon \ll 1/2,$ the embedding $H^{1/2+\varepsilon}(\Omega) \hookrightarrow C(\overline\Omega)$ is continuous and the interpolation inequalities
		\begin{equation}\label{emb_epsinf}
			\|\B{u}\|_{C(\overline\Omega)} \leqslant \kappa_{\varepsilon,c} \|\B{u}\|_{\BH^{1/2+\varepsilon}(\Omega)},
		\end{equation}
		\begin{equation}\label{emb_epsinter}
			\|\B{u}\|_{\BH^{1/2+\varepsilon}(\Omega)} \leqslant \kappa_{\varepsilon} \|\B{u}\|_{\BH^1(\Omega)}^{1/2 + \varepsilon}\|\B{u}\|_{\BL^2(\Omega)}^{1/2 - \varepsilon}
		\end{equation}
		hold for all $\B{u} \in \BH^{1/2+\varepsilon}(\Omega)$ and $\B{u} \in \BH^1(\Omega)$, respectively.
		\item[\bf (iii)] the embedding $\BU \hookrightarrow [C^1([0,T])]^{2m}$ is continuous and the inequality  \begin{equation}\label{emb_12}
			\|\Phi\|_{[C^1([0,T])]^{2m}} \leqslant \kappa_{1,\BU}\|\Phi\|_{\BU},
		\end{equation}
		holds for all $\Phi \in \BU.$ 
	\end{itemize}
\end{proposition}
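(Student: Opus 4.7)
The plan is to reduce everything to one-dimensional embeddings on a single interval $\Omega_k$ and then assemble the estimates across pipes, since the spaces $\BH^1(\Omega)$, $\BL^p(\Omega)$ and $C(\overline\Omega)$ are finite Cartesian products of function spaces over the pipes. Concretely, for any $\B{u} = (u_1,\dots,u_{2m}) \in \BH^1(\Omega)$, the norm $\|\B{u}\|_{\BH^1(\Omega)}^2$ is (up to the fixed pipe-dependent weights $D_k^2,c^2$) the sum $\sum_{j=1}^{2m}\|u_j\|_{H^1(\Omega_{k(j)})}^2$, and analogously for all the other norms that appear. Thus it suffices to establish each displayed inequality componentwise on each interval $\Omega_k$ and then sum.

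For part (i), the one-dimensional embedding $H^1(\Omega_k) \hookrightarrow L^p(\Omega_k)$ for $1\leqslant p\leqslant \infty$ is classical (e.g., \cite[Thm.~2.5.4]{K}); applying it to each $u_j$ and summing (or taking the max in the $p=\infty$ case) delivers \eqref{emb_1p} with a constant $\kappa_{1,p}$ depending on $m$, the lengths $L_k$ and the weights $D_k,c$. For part (ii), in one dimension $H^1(\Omega_k) \hookrightarrow C(\overline{\Omega_k})$ is continuous with an explicit constant and, by Rellich--Kondrachov, compact; compactness is preserved under finite Cartesian products, hence $\BH^1(\Omega)\hookrightarrow C(\overline\Omega)$ is compact and the estimate \eqref{emb_1inf} follows by summing the componentwise bounds.

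For part (iii) I would invoke the classical result that in one space dimension $H^s(\Omega_k)\hookrightarrow C(\overline{\Omega_k})$ whenever $s>1/2$ (e.g., \cite[Thm.~2.6.4]{K}), applied with $s = 1/2 + \varepsilon$, to obtain \eqref{emb_epsinf} componentwise. The interpolation inequality \eqref{emb_epsinter} is then the standard Lions--Magenes interpolation identity $[H^1,L^2]_{\theta} = H^{1-\theta}$ with $\theta = 1/2 - \varepsilon$ (see \cite[p.~19]{L-M}), applied on each $\Omega_k$; this gives $\|u_j\|_{H^{1/2+\varepsilon}(\Omega_k)} \leqslant C\|u_j\|_{H^1(\Omega_k)}^{1/2+\varepsilon}\|u_j\|_{L^2(\Omega_k)}^{1/2-\varepsilon}$, and Young's inequality (or Cauchy--Schwarz on the exponents) lets one pass from a sum of products of norms to a product of sums, yielding \eqref{emb_epsinter} on the network.

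Part (iv) is the simplest: since $\BU \subset [H^2(0,T)]^{2m}$ and, in one time dimension, $H^2(0,T)\hookrightarrow C^1([0,T])$ is a standard Sobolev embedding, \eqref{emb_12} follows by applying this componentwise to each $\Phi_k$ and summing. There is no genuine obstacle in the proof; the only care needed is in collecting the pipe-dependent weights $D_k^2,c^2$ into a single constant and, in part (iii), handling the passage from componentwise interpolation bounds to a clean product-form inequality on the network, but both are routine bookkeeping steps.
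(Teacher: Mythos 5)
Your proposal is correct and follows essentially the same route as the paper, which offers no written proof at all but simply declares the proposition to be a combination of the classical one--dimensional embedding and interpolation theorems in \cite{K} and \cite{L-M} ``adapted to our setting''; your componentwise reduction and reassembly across the pipes is exactly that adaptation, spelled out. The only point worth being explicit about is the passage from the componentwise interpolation bounds to \eqref{emb_epsinter} on the product space, which is H\"older's inequality with the conjugate exponents $(1/2+\varepsilon)^{-1}$ and $(1/2-\varepsilon)^{-1}$, as you indicate.
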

It should be noted that we introduced the above with distinct indices. But particularly in Section \ref{localwell}, the various embeddings will be used without further explanation or possibly any reference to $m$.

In order to formulate our main results, we reintroduce our state equations in a unified abstract setting.  For this we first formally introduce several abstract operators and some notation. Let $\BBB_0^k$ and $\BBB_1^k(x)$ act, formally, on a vector $\vec{w} = (w_1, w_2)$ according to
\begin{equation}\label{bdop} 
	\BBB_0^k\vec{w} = 
	\begin{bmatrix}
		0 & -\dfrac{c^2}{L_k} \\ \dfrac{1}{L_k} & 0
	\end{bmatrix} \vec{w},\qquad \BBB_1^k(x)\vec{w} \coloneqq 
	\begin{bmatrix}
		\dfrac{L_k-x}{L_k}w_1 \\[4mm] \dfrac{x}{L_k}w_2
	\end{bmatrix} = 
	\begin{bmatrix}
		\dfrac{L_k-x}{L_k} & 0 \\ 0 & \dfrac{x}{L_k}
	\end{bmatrix}\vec{w},
\end{equation} 
and define 
\begin{equation}\label{defB}
	\BB_0 \coloneqq \mbox{diag}\{\BBB_0^1, \cdots, \BBB_0^m\}, \qquad \BB_1 \coloneqq \mbox{diag}\{\BBB_1^1, \cdots, \BBB_1^m\}.
\end{equation} 
For the rest of the paper, we will not carry on the dependence of $\BB_1^k$ on $\B{x} \in \BR^{2m}$. Next, recalling that $\gamma_k = \dfrac{g}{c^2}\sin(\alpha_k)$ we introduce operators $\BBA_k$ and $\BBP_k$ as 
\begin{equation}\label{aandf}
	\BBA_k\coloneqq
	\begin{bmatrix}
		0 & -c^2\partial_x \\ -\partial_x & 0
	\end{bmatrix},\qquad \BBP_k\coloneqq 
	\begin{bmatrix}
		0 & 0 \\ -\gamma_k & 0
	\end{bmatrix}
\end{equation}
and
\begin{equation}\label{defAP} 
	\BA \coloneqq \mbox{diag}\left\{\BBA_1, \cdots, \BBA_m\right\} \qquad \BP \coloneqq \mbox{diag}\left\{\BBP_1, \cdots, \BBP_m\right\}, 
\end{equation} 
with domains to be appropriately defined after equation \eqref{abs1} below.
Finally, with $\vec{v}^k \coloneqq (p^k,q^k)$, $\B{v} \coloneqq (\vec{v}^k)\kk{k}$ and recalling that  $\beta_k \coloneqq \dfrac{\lambda_k}{2D_k}$, we define
\begin{equation}\label{begF}
	\BBF^k(\B{v}) \coloneqq \left(0,-\beta_k\dfrac{q^k|q^k|}{p^k}\right)
\end{equation}
and 
$\BF(\B{v})\coloneqq (\BBF^k(\B{v}))\kk{k}.$ 
Then, for a (network)--boundary datum $\Phi \in \BU^\mathrm{ad}$ represented as in \eqref{phi4}, the semilinear Euler system \eqref{ksys} can be rewritten as 
\begin{subnumcases}{\label{abs1}}
	\B{u}_t = (\BA+\BP) \B{u} + \BB\Phi + \BF(\B{v}), \label{e232nanew} \\[2mm]
	\B{u}(0) =\B{u}_0, \label{e232nbnew} 
\end{subnumcases} 
where $\B{u} \coloneqq \B{v} - \BB_1\Phi$,  $\BB \coloneqq \BB_0 + \BP\BB_1 - D_t\BB_1$, the domain of $\BP$ is $\calD(\BP) = \BL^2(\Omega)$ and $\calD(\BA)$ is characterized by: $\B{u} \in \calD(\BA)$ if and only if 
\begin{itemize} 
	\item[\bf(i)] $\B{u} = (\vec{u}_1, \cdots, \vec{u}_m) \in \BH^1(\Omega)$, $\vec{u}_k = (\tilde p^k, \tilde q^k)$; 
	\item[\bf(ii)] $\tilde p^k(0_k) = 0$ for all $k$ such that $0_k \in \calV_-^\partial$; 
	\item[\bf(iii)]$\tilde q^k(L_k) = 0$ for all $k$ such that $l_k \in \calV_+^\partial$; and
	\item[\bf(iv)] both continuity conditions \eqref{pressurecont} and \eqref{kirchoff} are satisfied at all the inner nodes.
\end{itemize}
Let us next introduce the notion of a classical (smooth) solution.
\begin{definition}[\bf Classical solution]\label{cl_sol}
	Let $\Phi \in \BU^\mathrm{ad}$. We say that $\B{v}: \Omega \times (0,T) \to \BR^{2m}$ is a {\bf classical solution} of the semilinear Euler system with boudary condition given by $\Phi$ provided $\B{v} \in \BBX_{T}(\BH^1(\Omega),\BL^2(\Omega))$, $\B{v}(0) = \B{v}_{\mathrm{e}}$, is such that $\B{v}_k = (p^k,q^k)$ solves the corresponding semilinear Euler system \eqref{ksys} and satisfies both, the boundary and the continuity conditions.
\end{definition}
Notice that initial, boundary \emph{and} continuity conditions have to be accounted for separately in the above definition. In this sense, the introduction of the abstract system \eqref{abs1} helps to simplify the analysis, because now both, the boundary and continuity conditions are embedded into the definition of the differential operator $\BA$. The main property of $\BA$ allowing us to even consider the abstract version of the system is state below and discussed in detail in Section \ref{localwell}. 
\begin{theorem}\label{gen}
	The operator $\BA: \calD(\BA) \subset \BL^2(\Omega) \to \BL^2(\Omega)$ is {\bf skew--adjoint}. Therefore, it generates a $C_0$--{\bf group} of isometries.
\end{theorem}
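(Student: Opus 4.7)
The plan is to invoke the standard characterization: $\BA$ is skew--adjoint on $\BL^2(\Omega)$ if and only if it is densely defined, skew--symmetric, and $\mathrm{Ran}(I \pm \BA) = \BL^2(\Omega)$; the generation of a $C_0$--group of isometries then follows from Stone's theorem. Density of $\calD(\BA)$ is essentially free, since $\prod_{k=1}^m [C_c^\infty(\Omega_k)]^2$ lies in $\calD(\BA)$ (every boundary, pressure--continuity and Kirchhoff condition trivializes on functions supported strictly inside each pipe) and is already dense in $\BL^2(\Omega)$.

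For skew--symmetry I pick $\B{u}, \B{w} \in \calD(\BA)$ with components $\vec{u}_k = (\tilde p^k, \tilde q^k)$ and $\vec{w}_k = (r^k, s^k)$, and integrate by parts pipe-by-pipe, using the weights $D_k^2$ and $c^2$ built into the $\BL^2(\Omega)$ inner product. After the interior terms cancel symmetrically, one is left with
\[
(\BA \B{u}, \B{w})_{\BL^2(\Omega)} + (\B{u}, \BA \B{w})_{\BL^2(\Omega)} = -c^2 \sum_{k=1}^m D_k^2 \bigl[\tilde q^k r^k + \tilde p^k s^k\bigr]_0^{L_k}.
\]
Rewriting the right--hand side as a sum over $v \in \calV$ with signs $\xi_k(v)$, three situations arise. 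At $v \in \calV_-^\partial$ the conditions $\tilde p^k(0_k) = r^k(0_k) = 0$ annihilate the bracket; at $v \in \calV_+^\partial$ the conditions $\tilde q^k(L_k) = s^k(L_k) = 0$ do the same. At an interior vertex $v$, pressure continuity lets me factor out common values $\tilde p(v), r(v)$, producing the Kirchhoff quantities $\sum_{k \in \kappa(v)} D_k^2 \xi_k(v) \tilde q^k(v)$ and $\sum_{k \in \kappa(v)} D_k^2 \xi_k(v) s^k(v)$, both of which vanish by the definition of $\calD(\BA)$.

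For the range condition, fix $\B{f} \in \BL^2(\Omega)$ and seek $\B{u} \in \calD(\BA)$ with $\B{u} - \BA \B{u} = \B{f}$. On each pipe the equation becomes the $2 \times 2$ linear first--order ODE system
\[
\tilde p^k + c^2 \partial_x \tilde q^k = f_1^k, \qquad \tilde q^k + \partial_x \tilde p^k = f_2^k,
\]
whose general solution is an explicit integral expression in $\B{f}$ plus a two--parameter homogeneous piece, lying in $\BH^1(\Omega_k)$ as soon as the data lie in $\BL^2(\Omega_k)$. The $2m$ free constants are pinned down by the side conditions of $\calD(\BA)$: one per boundary vertex, plus $|\kappa(v)| - 1$ pressure--matching relations and one Kirchhoff identity at each interior vertex $v$, for a total of $|\kappa(v)|$ conditions per such $v$. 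Since $\sum_v |\kappa(v)| = 2m$ (each edge contributes two endpoint incidences), the resulting algebraic system for the $2m$ unknowns is square. Injectivity is immediate from the skew--symmetry just established: if $\B{u} - \BA \B{u} = 0$, pairing with $\B{u}$ yields $\|\B{u}\|_{\BL^2(\Omega)}^2 = 0$. A square injective linear system is invertible, delivering a unique $\B{u} \in \calD(\BA)$; the argument for $I + \BA$ is entirely analogous, and Stone's theorem then packages the $C_0$--group of isometries.

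The step I expect to be most delicate is the bookkeeping at interior nodes in the maximality argument: one must explicitly parametrize the pipe--wise solutions, arrange the $2m$ scalar equations for the node values so that the square--system count is transparent, and simultaneously verify that the glued vector lies in $\BH^1$ on each pipe and satisfies \emph{all} of the Kirchhoff and pressure--continuity identities, not merely some of them. Invoking skew--symmetry to obtain injectivity, rather than computing a determinant of the coupling matrix by hand, is what keeps this otherwise tedious step manageable.
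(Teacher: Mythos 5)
Your proof is correct, and its first half --- density of $\calD(\BA)$ plus the pipe-by-pipe integration by parts showing that the endpoint contributions vanish through the boundary, pressure-continuity and Kirchhoff conditions --- is the polarized (bilinear) version of exactly the computation the paper performs for the quadratic form $(\BA\B{u},\B{u})_{\BL^2(\Omega)}=0$ in Theorem~\ref{maxmon}; the two are equivalent. Where you genuinely diverge is the maximality/range step. The paper establishes maximal dissipativity of $\pm\BA$ abstractly, via a norm identity relating $\|\BA\B{u}\|_{\BL^2(\Omega)}$ to $\|\B{u}\|_{\BH^1(\Omega)}$ on $\calD(\BA)$ together with a cited maximality criterion, and then upgrades to skew-adjointness by the extension argument of Corollary~\ref{skew} (first $\BA^\ast\supset-\BA$ by polarization, then $\BA^\ast\subset-\BA$ using solvability of $(I+\BA)\B{v}=\B{g}$). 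You instead verify $\mathrm{Ran}(I\pm\BA)=\BL^2(\Omega)$ by hand: solve the $2\times2$ resolvent ODE on each pipe, count the $2m$ free constants against the $|\calV^\partial|+\sum_{v\in\calV^\circ}|\kappa(v)|=2m$ node conditions, and use the injectivity supplied by skew-symmetry to invert the resulting square coupling system. Your route is more constructive and self-contained (it does not lean on an external maximality criterion and makes explicit why the resolvent equation is solvable on the network), at the cost of the node bookkeeping you rightly flag as the delicate point; the paper's route is shorter but leaves that solvability implicit. One cosmetic caveat: $\BL^2(\Omega)$ is a real Hilbert space here, so the final generation statement is most cleanly obtained by applying Lumer--Phillips to both $\BA$ and $-\BA$ (as the paper does) rather than quoting Stone's theorem verbatim; the range conditions you verify are precisely what either formulation requires.
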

Of course, for the purpose of this study, the generation of a strongly continuous semigroup is enough. However, we chose to stated Theorem \eqref{gen} in full strength since such properties come naturally in the proof of the generation results. Now, since $\BA$ is the generator of a semigroup, the notion of classical solutions is borrowed from semigroup theory.
\begin{lemma}\label{ntoabs}
	Let $\Phi \in \BU^\mathrm{ad}$. Then $\B{v}:  (0,T) \times \Omega \to \BR^{2m}$ is a {\bf classical solution} of the semilinear Euler system with boundary condition given by $\Phi$ if and only if $\B{u} = \B{v} - \BB_1\Phi \in \BBX_{T}(\calD(\BA),\BL^2(\Omega))$ solves \eqref{abs1} in the classical sense.
\end{lemma}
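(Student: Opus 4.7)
The plan is to treat $\B u = \B v - \BB_1\Phi$ as an affine change of variables that turns the inhomogeneous boundary-value problem \eqref{ksys} into the abstract Cauchy problem \eqref{abs1}. The matrix $\BB_1$ serves as a smooth $\BH^1$-lifting of the boundary data: on pipe $k$, $\BBB_1^k(x)$ applied to the corresponding components of $\Phi$ reproduces the prescribed pressure at $x=0_k$ and the prescribed flux at $x=L_k$, so that $\B v-\BB_1\Phi$ vanishes in the controlled component at every controlled endpoint. A crucial observation used throughout is that, by the definition \eqref{defU} of $\BU$, whenever a vertex $v\in\calV^\circ$ is interior the components of $\Phi$ associated with endpoints meeting at $v$ are identically zero (because such endpoints belong neither to $\calV_-^\partial$ nor to $\calV_+^\partial$); hence $\BB_1\Phi$ vanishes at every interior node.

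For the forward direction, assume $\B v$ is a classical solution in the sense of Definition \ref{cl_sol}. Since $\Phi\in[H^2(0,T)]^{2m}\hookrightarrow[C^1([0,T])]^{2m}$ and $\BB_1$ is affine in $x$ with constant coefficients, $\BB_1\Phi\in C^1([0,T];\BH^1(\Omega))$, so $\B u=\B v-\BB_1\Phi\in\BBX_T(\BH^1(\Omega),\BL^2(\Omega))$. To see $\B u(t)\in\calD(\BA)$ for every $t$, conditions (ii)--(iii) hold because at each controlled endpoint the controlled component of $\B v$ cancels exactly the corresponding value of $\BB_1\Phi$, in view of \eqref{phi4}; condition (iv) follows because $\B u(v)=\B v(v)$ at every $v\in\calV^\circ$, so the Kirchhoff and pressure-continuity conditions transfer from $\B v$ to $\B u$. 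Next, a direct pipe-wise calculation gives $\BBA_k\BBB_1^k(x)\vec w=\BBB_0^k\vec w$ for every $\vec w\in\BR^2$, hence $\BA\BB_1\Phi=\BB_0\Phi$. Differentiating $\B u=\B v-\BB_1\Phi$ in time and substituting \eqref{ksys} in the vectorial form $\B v_t=(\BA+\BP)\B v+\BF(\B v)$ yields
\begin{equation*}
\B u_t=(\BA+\BP)\B u+(\BA+\BP)\BB_1\Phi-\BB_1 D_t\Phi+\BF(\B v)=(\BA+\BP)\B u+\BB\Phi+\BF(\B v),
\end{equation*}
which is \eqref{e232nanew}; the initial condition $\B u(0)=\B v_\mathrm{e}-\BB_1\Phi^\mathrm{e}=\B u_0$ follows from $\B v(0)=\B v_\mathrm{e}$ and $\Phi(0)=\Phi^\mathrm{e}$.

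The converse reverses each of these steps. Given $\B u\in\BBX_T(\calD(\BA),\BL^2(\Omega))$ solving \eqref{abs1}, set $\B v=\B u+\BB_1\Phi$, so that $\B v\in\BBX_T(\BH^1(\Omega),\BL^2(\Omega))$; the boundary values \eqref{phi4} are recovered by evaluating $\B v$ at controlled endpoints (where $\BB_1\Phi$ supplies exactly $\Phi$); the continuity conditions at interior nodes follow from those of $\B u$ together with the vanishing of $\BB_1\Phi$ there; and the Euler equations \eqref{ksys} are recovered by substituting $\B v=\B u+\BB_1\Phi$ into \eqref{e232nanew} and reusing $\BA\BB_1\Phi=\BB_0\Phi$. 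The computations themselves are algebraic and local; the main subtlety is the careful bookkeeping of how each of the four properties defining $\calD(\BA)$ distributes between $\B u$ and the lifting $\BB_1\Phi$, and the pivotal fact that lets the Kirchhoff \eqref{kirchoff} and pressure-continuity \eqref{pressurecont} conditions pass cleanly between the two formulations is precisely the vanishing of $\BB_1\Phi$ at every interior node, which is enforced by the structural constraints encoded in the definition of $\BU$.
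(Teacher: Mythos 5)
Your proof is correct and supplies precisely the verification the paper leaves implicit: Lemma \ref{ntoabs} is stated without an explicit proof, because the definitions of $\BB_0$, $\BB_1$, $\BB=\BB_0+\BP\BB_1-D_t\BB_1$ and $\calD(\BA)$ are engineered exactly so that the change of variables $\B{u}=\B{v}-\BB_1\Phi$ works. Your two key observations --- the pipe-wise identity $\BBA_k\BBB_1^k\vec{w}=\BBB_0^k\vec{w}$, and the fact that the structure of $\BU$ in \eqref{defU} forces $\BB_1\Phi$ to vanish identically at every node in $\calV^\circ$ (so that \eqref{kirchoff} and \eqref{pressurecont} transfer between $\B{v}$ and $\B{u}$) --- are exactly the right points to check, and the rest of your bookkeeping of boundary values, regularity and the initial condition is consistent with the paper's conventions.
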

We work with the abstract formulation of the state system from now on. Based on this, we want to study the well-posedness of the following minimization problem:
\begin{equation}
	\begin{aligned}
		\min\limits_{(\Phi,\B{v}) \in \BU \times \BV} \ &\hat J(\Phi,\B{v}) \coloneqq \dfrac{1}{2}\|\B{v}-\B{v}_d\|_{L^2(0,T; \BL^2(\Omega))}^2 + \dfrac{\sigma}{2}\|\Phi\|_{\BU}^2\\
		\mathrm{s.t.} \ \ \ \  &\Phi \in \BU^\mathrm{ad}, \B{v} \in \BV_{T,\BK}^{\mathrm{ad}}, \B{v} \ \mathrm{is \ a \ classical \ solution \ as \ in \ Definition \ \ref{cl_sol}} \\
	\end{aligned}
	\tag{${P}_{T}^\BK$}\label{minp}
\end{equation}
where $T>0$, the compact set $\BK$ is \emph{suitable} (see Definition \ref{suitable}), $\B{v}_d \in \mathrm{int}(\BV_{T,\BK}^\mathrm{ad})$ is a given desired (gas) distribution, and $\sigma>0.$ 

Our main theorem is the first step needed to study \eqref{minp} in the sense that it establishes its consistency, i.e., that there exists a time $T>0$ such that the set of pairs $(\Phi,\B{v})$ with $\B{v}$ a classical solution is non-empty. 
\begin{theorem}[\bf Well-posedness of the state system]\label{mainwpp} 	
	Let $\BCK$ be a suitable compact set and $\B{v}_\mathrm{e}$ be a steady state solution of the semilinear Euler system \eqref{ksys} such that Assumption \ref{SSnet} is satisfied and let $\Phi \in \BU^\mathrm{ad}$. Then, there exist $T = T(\kappa_\BU, \BCK, \B{v}_{\mathrm{e}}) > 0$ and a unique $\B{v} = \B{v}(\Phi)  \in \BBX_T^{\BCK}(\BH^1(\Omega),\BL^2(\Omega))$ such that $\B{u} = \B{v} - \BB_1\Phi$ is a classical solution of \eqref{abs1}.
\end{theorem}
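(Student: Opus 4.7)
The plan is to recast \eqref{abs1} as a fixed-point problem and apply a Banach contraction argument on a closed subset of $\BBX_T^{\BCK}(\BH^1(\Omega),\BL^2(\Omega))$. Setting $\B{u}_0 := \B{v}_\mathrm{e} - \BB_1\Phi^\mathrm{e}$, the compatibility of $\B{v}_\mathrm{e}$ with Kirchhoff \eqref{kirchoff} and pressure continuity \eqref{pressurecont} (Assumption \ref{SSnet}), together with the structure of $\Phi^\mathrm{e}$ in \eqref{phi2}, ensures $\B{u}_0 \in \calD(\BA)$. By Theorem \ref{gen}, $\BA$ generates a $C_0$-group of isometries on $\BL^2(\Omega)$; since $\BP$ is bounded on $\BL^2(\Omega)$, the sum $\BA+\BP$ generates a $C_0$-semigroup $\{S(t)\}_{t\geqslant 0}$. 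A candidate solution then satisfies the Duhamel integral equation
\begin{equation*}
\B{u}(t) \;=\; S(t)\B{u}_0 + \int_0^t S(t-s)\bigl[\BB\Phi(s) + \BF(\B{u}(s)+\BB_1\Phi(s))\bigr]\,ds .
\end{equation*}

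First, I would fix a small $T_0>0$ and define $\mathcal{S}_{T_0}$ as the subset of $\BBX_{T_0}(\BH^1(\Omega),\BL^2(\Omega))$ consisting of those $\B{u}$ with $\B{u}(0)=\B{u}_0$ for which $\B{v}:=\B{u}+\BB_1\Phi$ lies in $\BBX_{T_0}^{\BCK}$ in the sense of \eqref{compactsmooth}. Because $\BCK$ is suitable (Definition \ref{suitable}), the pressure components remain bounded below by a positive constant on $\BCK$, so $\BF$ is globally Lipschitz on the set of $\BCK$-valued functions with constant $\mathrm{Lip}_\BF$ depending only on $\{\beta_k\}$ and $\BCK$. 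Define $\Psi(\B{u})$ to be the right-hand side of the Duhamel formula. The contraction estimate in the weaker topology of $C([0,T_0];\BL^2(\Omega))$, namely $\|\Psi(\B{u})-\Psi(\tilde\B{u})\|_{C([0,T_0];\BL^2(\Omega))}\leqslant T_0\,\mathrm{Lip}_\BF\,\|\B{u}-\tilde\B{u}\|_{C([0,T_0];\BL^2(\Omega))}$, follows immediately from the isometry of $S(t)$ and the Lipschitz bound on $\BF$, hence closes for $T_0$ small.

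The main obstacle is self-mapping of $\Psi$ into $\mathcal{S}_{T_0}$. For the pointwise constraint $\B{v}(t,\B{x})\in\BCK$, I would use that $\B{v}_\mathrm{e}(\B{x})\in\mathrm{int}(\BCK)$ uniformly in $\B{x}\in\overline\Omega$ together with the continuous embedding from \eqref{embed}: for $\kappa_\BU$ and $T_0$ both sufficiently small, the iterate $\B{v}$ remains uniformly close to $\B{v}_\mathrm{e}$ and hence inside $\BCK$. For the derivative bounds $\|D_t\B{v}\|_{C([0,T_0];\BL^2(\Omega))}\leqslant\kappa_t$ and $\|D_\B{x}\B{v}\|_{C([0,T_0];\BL^2(\Omega))}\leqslant\kappa_\B{x}$, I would differentiate the Duhamel formula in time, using that $\Phi\in[H^2(0,T)]^{2m}$ yields $\BB\Phi\in H^1(0,T;\BL^2(\Omega))$ and $\BB_1\Phi\in H^1(0,T;\BH^1(\Omega))$, and that $\B{u}_0\in\calD(\BA)$ ensures $(\BA+\BP)S(\cdot)\B{u}_0\in C([0,T_0];\BL^2(\Omega))$. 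This produces an a priori estimate of the form $\|\B{u}\|_{\BBX_{T_0}}\leqslant C(\B{u}_0,\Phi,\BCK) + T_0\,\mathrm{Lip}_\BF\,\|\B{u}\|_{\BBX_{T_0}}$; the spatial derivative bound follows from the abstract equation via $\BA\B{u}=\B{u}_t-\BP\B{u}-\BB\Phi-\BF(\B{v})$. Shrinking $T_0$ absorbs the nonlinear term and closes the estimate.

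Once $\Psi$ is shown to be a self-map of $\mathcal{S}_{T_0}$ and a contraction in the $C([0,T_0];\BL^2(\Omega))$-topology (closedness of $\mathcal{S}_{T_0}$ in this weaker topology being a standard consequence of the compact embedding \eqref{embed}), Banach's fixed-point theorem produces the unique $\B{u}\in\mathcal{S}_{T_0}$, and Lemma \ref{ntoabs} identifies $\B{v}:=\B{u}+\BB_1\Phi$ as the desired classical solution. The hard step is calibrating $T_0=T_0(\kappa_\BU,\BCK,\B{v}_\mathrm{e})$ so that both the positivity of pressures (and more generally the pointwise constraint $\B{v}\in\BCK$) and the quantitative derivative bounds from \eqref{compactsmooth} are preserved under the iteration; once these are in place, everything else follows from standard semigroup perturbation analysis.
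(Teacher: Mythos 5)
Your overall architecture (freeze the nonlinearity, use the $C_0$-semigroup generated by $\BA+\BP$ and Duhamel's formula, exploit the suitability of $\BCK$ to get a uniform Lipschitz constant for $\BF$, and shrink $T$ to keep the state inside $\BCK$) matches the paper's, but there are two genuine gaps, and the first is precisely the one the authors flag as their reason for \emph{not} running a Banach fixed-point argument. Your set $\mathcal{S}_{T_0}$ is not closed in the metric of $C([0,T_0];\BL^2(\Omega))$ in which your contraction holds: a sequence bounded in $\BBX_{T_0}(\BH^1(\Omega),\BL^2(\Omega))$ and convergent in $C([0,T_0];\BL^2(\Omega))$ has a limit that is in general only Lipschitz in time and $L^\infty$ in $\BH^1(\Omega)$, not in $C^1([0,T_0];\BL^2(\Omega))\cap C([0,T_0];\BH^1(\Omega))$; the compact embedding \eqref{embed} does not repair this. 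So Banach's theorem on $\mathcal{S}_{T_0}$ does not apply as stated, and applying it on the closure only yields a mild solution whose classical regularity must be recovered separately. The paper circumvents this by running the explicit iteration $\B{u}_{k+1}=\Psi(\B{u}_k)$ and proving that the telescoping series $\sum_k\|\B{u}_{k+1}-\B{u}_k\|$ converges \emph{in the full $\BBX_{T_c}(\BH^1(\Omega),\BL^2(\Omega))$ norm}, using the estimates \eqref{contest3n} and \eqref{contest2} together with \eqref{LipF2} to bound the higher-order norms of successive differences by $T_c$ times the previous ones; this is the content of their convergence corollary and is missing from your sketch.

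The second gap is the mechanism behind ``the iterate remains uniformly close to $\B{v}_\mathrm{e}$ and hence inside $\BCK$.'' The continuous embedding $\BBX_T\hookrightarrow C([0,T]\times\overline\Omega^2)$ alone gives a sup-norm bound that does not shrink as $T\downarrow 0$ uniformly over all admissible inputs, because the $\BH^1$-norm of $\Psi(\B{u})-\B{u}_0$ is only $O(1)$. The paper's device is the interpolation inequality \eqref{emb_epsinter}: the $\BL^2$-in-space norm of $\Psi(\B{u})-\B{u}_0$ is $O(T)$ by the Duhamel integral, the $\BH^1$ norm is $O(1)$, and interpolating through $\BH^{1/2+\varepsilon}(\Omega)\hookrightarrow C(\overline\Omega)$ yields a sup-norm bound of order $T^{1/2-\varepsilon}$, which can be driven below the geometric margin $r-c_1\kappa_\BU$ of \eqref{rKK} uniformly in the iteration index. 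Without identifying this (or an equivalent quantitative estimate), your calibration of $T_0$ so that the pointwise constraint is preserved under the map does not close.
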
 
Inspired by \cite{H-H-S-S}, the solution in Theorem \ref{mainwpp} is built on semigroup theory (see \cite{E-R,P}). However, in \cite{H-H-S-S} the nonlinearity is not explicit and regularity properties such as Lipschitz continuity (with respect to the solution) are assumed, allowing the authors to apply standard results of \cite{P} to obtain well-posedness. In our case, the nonlinearity is explicit and can possibly degenerate. The scope of our proof is not similar to the treatment of semilinear problems in \cite{P}, i.e., solutions are constructed as fixed points of contraction-maps whose formula come from the representation of the underlying linear abstract initial value Cauchy problem. The main reason is invariance: even on spaces away from the vacuum where some sort of Lipschitz continuity (w.r.t to the solution) is guaranteed for our nonlinearity, it is hard to construct sets on which a contraction \emph{self}-map can be obtained. 

Also, it is known that the interplay of smallness of time and data is standard. However, in our case it seems that smallness alone is not enough. As a consequence, our strategy lies in taking advantage of the structure of the initial data (taken as steady state solution) along with geometric properties provided by \emph{suitable} (in the sense of Definition \ref{suitable}) compact sets.

Theorem \ref{mainwpp} allows us to define a control-to-state map \begin{equation}\label{ctsmap}\BBS: \BU^\mathrm{ad} \subset \BU \to \BBX_T^\BCK\left(\BH^1(\Omega),\BL^2(\Omega)\right).\end{equation} Due to the embedding $\BBX_T\left(\BH^1(\Omega),\BL^2(\Omega)\right) \hookrightarrow \BV_T$, $\BBS$ may also be taken as a map from $\BU^\mathrm{ad}$ to both $L^2(0,T; \BL^2(\Omega))$ and $\BV_T$, respectively. However, for our later goal of studying differentiability properties of the control-to-state map, we need to restrict this map to $\BU_0^\eta$, i.e., we consider $$\BS: \BU_0^\eta \subset \BU_0 \to \BBX_T^\BK\left(\BH^1(\Omega),\BL^2(\Omega)\right)$$ with $\BS(\Phi) := \BBS(\phi(\Phi)) = \BBS(\Phi + \Phi^\mathrm{e})$; see Remark \ref{U0eta}. 

Consequently, the reduced version of \eqref{minp} can be written as 
\begin{equation}
	\begin{aligned}
		\min\limits \ &  J(\Phi) \coloneqq \hat J\left(\Phi,\BS(\Phi)\right) =  \dfrac{1}{2}\|\BS(\Phi)-\B{v}_d\|_{L^2(0,T;\BL^2(\Omega))}^2 + \dfrac{\sigma}{2}\|\Phi\|_{\BU}^2\\
		\mathrm{s.t.} \ \  &\Phi \in \BU_0^\eta \ \mathrm{and} \ \BS(\Phi) \in \BV_{T,\BK}^\mathrm{ad}.\\
	\end{aligned}
	\tag{${rP}_{T}^\BK$}\label{minp4}
\end{equation}
The next goal is to obtain existence of optimal control for \eqref{minp4}. This is often done by the direct method of the calculus of variations. For its application we need to establish several properties of the control-to-state map. Section \ref{existence_control} is dedicated to the corresponding analysis. The main property which renders the direct method applicable, is the following one:
\begin{proposition}
	The map $\BS: \BU_0^\eta \to L^2(0,T; \BL^2(\Omega))$ is weak-to-strong continuous.
\end{proposition}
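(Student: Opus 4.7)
The plan is to take an arbitrary sequence $\Phi_n \rightharpoonup \Phi$ in $\BU_0^\eta$, use two compact embeddings (one on the control side, one on the state side) to extract a subsequence along which everything converges strongly in useful topologies, pass to the limit in the mild formulation of the abstract system \eqref{abs1}, and identify the limit via the uniqueness part of Theorem~\ref{mainwpp}. Since both defining conditions of $\BU_0^\eta$ in Remark~\ref{U0eta} are closed and convex in $\BU_0$, this set is weakly closed, so $\Phi \in \BU_0^\eta$. Writing $\Psi_n := \Phi_n + \Phi^\mathrm{e}$ and $\Psi := \Phi + \Phi^\mathrm{e}$, the compactness of the embedding $H^2(0,T) \hookrightarrow\hookrightarrow C^1([0,T])$ upgrades weak to strong convergence on the control side: $\Psi_n \to \Psi$ in $[C^1([0,T])]^{2m}$. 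Consequently $\BB\Psi_n = \BB_0\Psi_n + \BP\BB_1\Psi_n - \BB_1 D_t\Psi_n \to \BB\Psi$ strongly in $C([0,T];\BL^2(\Omega))$.

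For the states, set $\B{v}_n := \BS(\Phi_n)$. Theorem~\ref{mainwpp} places each $\B{v}_n$ in $\BBX_T^{\BCK}(\BH^1(\Omega),\BL^2(\Omega))$ with $T$ depending only on $\kappa_\BU$, $\BCK$ and $\B{v}_\mathrm{e}$; the bounds in \eqref{compactsmooth}, together with the pointwise constraint $\B{v}_n(t,\B{x})\in\BCK$ and $|\Omega|<\infty$, make $(\B{v}_n)$ uniformly bounded in $\BBY_T^\infty(\BH^1(\Omega),\BL^2(\Omega))$. The compact embedding in \eqref{embed} then produces a subsequence (not relabeled) and a limit $\B{v}^*$ with $\B{v}_n \to \B{v}^*$ in $C([0,T]\times\overline{\Omega}^2)$. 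Pointwise preservation of the state constraint under uniform convergence gives $\B{v}^*(t,\B{x})\in\BCK$ everywhere, and the suitability of $\BCK$ guarantees a uniform strict lower bound $p^{*,k}(t,x)\geq a_k>0$ on the pressure components.

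Next I would pass to the limit in the mild formulation. With $\B{u}_n := \B{v}_n - \BB_1\Psi_n$ and $\B{u}_0 := \B{v}_\mathrm{e} - \BB_1\Phi^\mathrm{e}$ (independent of $n$), Theorem~\ref{gen} yields
\begin{equation*}
    \B{u}_n(t) = e^{t(\BA+\BP)}\B{u}_0 + \int_0^t e^{(t-s)(\BA+\BP)}\bigl(\BB\Psi_n(s) + \BF(\B{v}_n(s))\bigr)\,ds.
\end{equation*}
Uniform convergence $\B{v}_n \to \B{v}^*$ together with the strict positivity of the pressure components makes $\B{v}\mapsto\BF(\B{v})$ Lipschitz on the relevant region, hence $\BF(\B{v}_n)\to\BF(\B{v}^*)$ uniformly. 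Combined with $\BB\Psi_n\to\BB\Psi$ and the strong continuity of the group generated by $\BA+\BP$, dominated convergence yields that $\B{u}^* := \B{v}^*-\BB_1\Psi$ satisfies the same mild equation driven by $\Psi$. The uniqueness in Theorem~\ref{mainwpp} forces $\B{v}^* = \BS(\Phi)$; and because every subsequence of $(\B{v}_n)$ has a further subsequence converging to this same limit in $C([0,T]\times\overline{\Omega}^2) \hookrightarrow L^2(0,T;\BL^2(\Omega))$, a standard subsequence-of-subsequence argument forces the entire original sequence to converge strongly in $L^2(0,T;\BL^2(\Omega))$.

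The step I expect to be delicate is the passage to the limit in the nonlinearity $\BF$, whose second component $-\beta_k q^k|q^k|/p^k$ is singular at the vacuum $p=0$. This is handled cleanly only because the a~priori pointwise state constraint inherited from $\BBX_T^{\BCK}$, together with the suitability of $\BCK$ in Definition~\ref{suitable}, provides a uniform positive lower bound on the pressure both along the approximating sequence and at the limit; without this, neither Lipschitz continuity of $\BF$ nor the uniform convergence of $\BF(\B{v}_n)$ would be available, and weak-to-strong continuity would fail.
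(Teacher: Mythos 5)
Your proof is essentially correct but takes a genuinely different route from the paper. You exploit the \emph{uniform} a priori bounds encoded in $\BBX_T^{\BCK}(\BH^1(\Omega),\BL^2(\Omega))$ to place the states in $\BBY_T^\infty(\BH^1(\Omega),\BL^2(\Omega))$ and use the compact embedding \eqref{embed} into $C([0,T]\times\overline{\Omega}^2)$, obtaining \emph{uniform} convergence of a subsequence; you then pass to the limit in the \emph{mild} (variation-of-constants) formulation, where the pressure lower bound from the suitable compact set makes $\BF$ Lipschitz and the nonlinear term converges uniformly. The paper instead only uses weak convergence in the Hilbert space $\BBY_{T_c}^2(\BH^1(\Omega),\BL^2(\Omega))$ followed by the compact embedding into $L^2(0,T_c;\BL^2(\Omega))$, and passes to the limit in a \emph{weak} (tested against $\calD(\BA)$) formulation; to identify the limit it develops a notion of weak solution on a set $\mathfrak{X}_T^{\BF}$ where $\BF$ is Lipschitz and proves uniqueness of weak solutions there by a semigroup-duality and Gr\"onwall argument. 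Your route is shorter and avoids introducing the weak-solution machinery, at the price of leaning harder on the quantitative bounds $\kappa_t,\kappa_{\B{x}}$ in \eqref{compactsmooth}; both approaches use the same three pillars (weak closedness of $\BU_0^\eta$, a compactness upgrade on the states, uniqueness of the limiting equation) and both finish with Uryson's subsequence principle.

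One step needs shoring up: you invoke ``the uniqueness in Theorem~\ref{mainwpp}'' to conclude $\B{v}^*=\BS(\Phi)$, but that theorem asserts uniqueness only within the class of \emph{classical} solutions in $\BBX_T^{\BCK}(\BH^1(\Omega),\BL^2(\Omega))$, whereas your limit $\B{v}^*$ is a priori only a continuous mild solution (uniform convergence does not give $\B{v}^*\in C([0,T];\BH^1(\Omega))\cap C^1([0,T];\BL^2(\Omega))$). You therefore need uniqueness of \emph{mild} solutions in the larger class where the pressure stays bounded below; this follows from a standard Gr\"onwall estimate applied to the difference of two mild solutions, using exactly the Lipschitz property of $\BF$ that your pressure bound provides --- and it is precisely the content of the paper's uniqueness theorem for weak solutions in $\mathfrak{X}_T^{\BF}$. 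With that one supplement the argument is complete.
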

The existence of optimal controls then follows.
\begin{theorem}[\bf Existence of optimal controls]\label{existoptimal}
	The problem \eqref{minp4} admits an optimal solution.
\end{theorem}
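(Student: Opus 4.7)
The plan is a direct method of the calculus of variations, enabled by the weak-to-strong continuity of $\BS$ just established and the Hilbert structure of $\BU_0$. First I would observe that the pointwise state constraint $\BS(\Phi)\in\BV_{T,\BK}^{\mathrm{ad}}$ is automatic for every $\Phi\in\BU_0^\eta$: the codomain $\BBX_T^{\BK}(\BH^1(\Omega),\BL^2(\Omega))$ of $\BS$ (see \eqref{compactsmooth}) embeds continuously into $\BV_T$ via \eqref{embed} and already enforces $\B{v}(t,\B{x})\in\BK$ on $\overline{Q}_T$, so $\BBX_T^{\BK}\subset \BV_{T,\BK}^{\mathrm{ad}}$. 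Consequently the feasible set of \eqref{minp4} is exactly $\BU_0^\eta$, which contains $\Phi\equiv 0$ (whose image under $\phi$ is the equilibrium datum $\Phi^\mathrm{e}$ with state $\B{v}_\mathrm{e}$), so $J^\ast := \inf J \in [0,\infty)$.

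Next, I would select a minimizing sequence $(\Phi_n)\subset\BU_0^\eta$ with $J(\Phi_n)\to J^\ast$. Boundedness in the Hilbert space $\BU_0\subset [H^2(0,T)]^{2m}$ yields, via Banach--Alaoglu, a (not relabeled) subsequence with $\Phi_n\rightharpoonup \bar\Phi$ weakly in $\BU_0$. To conclude $\bar\Phi\in \BU_0^\eta$, weak lower semicontinuity of the Hilbert norm gives $\|\bar\Phi\|_\BU\leq \liminf_n\|\Phi_n\|_\BU\leq \eta$, while the one-dimensional compact Sobolev embedding $[H^2(0,T)]^{2m}\hookrightarrow\hookrightarrow[C^1([0,T])]^{2m}$ (sharpening the continuous embedding \eqref{emb_12} via Rellich--Kondrachov) upgrades the weak convergence to strong convergence in $[C^1([0,T])]^{2m}$, yielding both $\|\bar\Phi\|_{[C([0,T])]^{2m}}\leq \kappa_\BU$ and $\bar\Phi(0)=\lim_n\Phi_n(0)=0$.

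Invoking the weak-to-strong continuity of $\BS$, one has $\BS(\Phi_n)\to \BS(\bar\Phi)$ strongly in $L^2(0,T;\BL^2(\Omega))$, so the tracking term of $J$ converges along the sequence; the Tikhonov term $\tfrac{\sigma}{2}\|\cdot\|_\BU^2$ is weakly lower semicontinuous on $\BU_0$. Combining these,
\[
J(\bar\Phi)\leq \liminf_n J(\Phi_n)=J^\ast,
\]
so $\bar\Phi$ is an optimal control for \eqref{minp4}.

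The only step requiring care is the weak closedness of $\BU_0^\eta$, specifically the uniform-norm bound $\|\Phi\|_{[C([0,T])]^{2m}}\leq \kappa_\BU$: it is not a priori preserved under weak convergence in $\BU$ given only the continuous embedding \eqref{emb_12}, but the stronger compact embedding available in one space dimension promotes weak convergence to uniform convergence and resolves the issue. Everything else is standard direct-method machinery driven by the weak-to-strong continuity proposition immediately preceding the theorem.
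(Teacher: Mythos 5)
Your overall strategy is the paper's: direct method, weak compactness of the bounded closed convex set $\BU_0^\eta$ in the Hilbert space $[H^2(0,T)]^{2m}$, weak-to-strong continuity of $\BS$ for the tracking term, and weak lower semicontinuity of the Tikhonov term. Your treatment of the weak closedness of $\BU_0^\eta$ (upgrading to strong $C^1$ convergence via the compact embedding $H^2(0,T)\hookrightarrow C^1([0,T])$ to preserve the uniform bound $\kappa_\BU$ and the condition $\Phi(0)=0$) is more detailed than the paper, which simply asserts weak closedness; your argument is correct, and one could alternatively note that $\|\cdot\|_{[C([0,T])]^{2m}}$ is convex and continuous on $\BU$, hence weakly lower semicontinuous, and that $\Phi\mapsto\Phi(0)$ is a bounded linear functional.

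The one step I cannot accept as written is your opening claim that the state constraint $\BS(\Phi)\in\BV_{T,\BK}^{\mathrm{ad}}$ is automatic, so that the feasible set is all of $\BU_0^\eta$. This rests on identifying the compact set appearing in the codomain $\BBX_T^{\BCK}$ of the solution map (the invariance region produced by the well-posedness construction, determined by $\B{v}_\mathrm{e}$, the radius $r$ and $\kappa_\BU$) with the state-constraint set $\BK$ of the optimization problem. The paper's notation is admittedly loose on this point, but these two sets are not meant to coincide in general: the later first-order analysis explicitly treats the case "$\BCK=\BK$" as the \emph{unconstrained} special case and devotes the entire normality/KKT machinery (measures $\Lambda\in\calM(Q_T)^2$, the RZK condition, almost-sure normality) to the situation where the state constraint is \emph{not} redundant. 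Declaring it redundant would trivialize that analysis. Consequently you must (a) restrict the minimizing sequence to \emph{feasible} controls (nonemptiness still holds since $\Phi=0$ gives the state $\B{v}_\mathrm{e}$, which lies in $\mathrm{int}(\BK)$ by suitability of $\BK$), and (b) verify that the weak limit $\bar\Phi$ is feasible. Step (b) is what your argument omits; it is easily repaired: from $\BS(\Phi_n)\to\BS(\bar\Phi)$ in $L^2(0,T;\BL^2(\Omega))$ extract an a.e.\ pointwise convergent subsequence, use closedness of $\BK$ to get $\BS(\bar\Phi)(t,\B{x})\in\BK$ a.e., and then continuity of $\BS(\bar\Phi)$ on $\overline{Q}_T$ to get it everywhere. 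This is exactly the feasibility step the paper's proof performs before passing to the lower semicontinuity argument.
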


Next we seek to establish a first-order characterization of optimal solutions. Such conditions are useful as they typically form the basis for developing numerical solution algorithms. For the pertinent first-order analysis we need to study differentiability properties of $\BS$ as well as the existence and regularity of adjoint states. Both notions are essential for establishing so-called primal-dual first-order optimality conditions relying on (bounded) Lagrange multipliers. 

\begin{proposition}[\bf Differentiability of $\BS$]
	 The map $\BS$ is continuously G\^{a}teaux differentiable when defined from $\BU_0^\eta$ to either $L^2(0,T;\BL^2(\Omega))$ or $\BV_T$. 
\end{proposition}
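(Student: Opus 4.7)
The plan is to verify G\^{a}teaux differentiability of $\BS$ by identifying the derivative as the solution of the linearized state equation and then controlling the Taylor remainder. Fix $\Phi\in\BU_0^\eta$, a direction $h\in\BU_0$, and set $\B{v}:=\BS(\Phi)$. I first consider the formally linearized abstract problem: find $\B{z}$ with $\B{z}(0)=0$ and
\begin{equation*}
\B{z}_t \;=\; (\BA+\BP)\,\B{z} + \BB\,h + \BF'(\B{v})\bigl(\B{z}+\BB_1 h\bigr),
\end{equation*}
and define the candidate derivative by $\B{w}:=\BS'(\Phi)h:=\B{z}+\BB_1 h$. Because $\B{v}(t,\B{x})\in\BCK$ and the pressure components of $\B{v}$ stay uniformly positive by suitability of $\BCK$ (Definition \ref{suitable}), the Jacobian $\BF'(\B{v})$ is a bounded, continuous multiplication operator on $\overline{Q}_T$. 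Combining this with Theorem \ref{gen} (so that $\BA+\BP+\BF'(\B{v})$ is a bounded perturbation of a group generator) and the energy-method machinery of Section \ref{localwell}, one obtains a unique $\B{z}\in\BBX_T(\BH^1(\Omega),\BL^2(\Omega))$.

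Next, I would show that the difference quotient $\tau^{-1}(\BS(\Phi+\tau h)-\BS(\Phi))$ converges to $\B{w}$. Writing $\B{v}_\tau:=\BS(\Phi+\tau h)$ and $\B{r}_\tau:=\B{v}_\tau-\B{v}-\tau\B{w}$, subtraction of the respective state equations shows that $\B{r}_\tau$ solves the linear Cauchy problem
\begin{equation*}
\partial_t \B{r}_\tau \;=\; (\BA+\BP+\BF'(\B{v}))\,\B{r}_\tau + R_\tau,\qquad \B{r}_\tau(0)=0,
\end{equation*}
with forcing $R_\tau:=\BF(\B{v}_\tau)-\BF(\B{v})-\BF'(\B{v})(\B{v}_\tau-\B{v})$. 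Since $\BF$ is $C^1$ on a neighbourhood of $\BCK$, the mean-value identity gives $|R_\tau|\leqslant \omega(|\B{v}_\tau-\B{v}|)\,|\B{v}_\tau-\B{v}|$ pointwise, where $\omega$ is the modulus of continuity of $\BF'$ on $\BCK$. The well-posedness estimates of Section \ref{localwell} yield the Lipschitz-type bound $\|\B{v}_\tau-\B{v}\|_{L^2(0,T;\BL^2(\Omega))}\leqslant C\tau\|h\|_\BU$ (obtained by an energy estimate on the difference of two states), and combined with the compact embedding \eqref{embed} also $\|\B{v}_\tau-\B{v}\|_{C(\overline{Q}_T)}\to 0$, so that $\|R_\tau\|_{L^2(0,T;\BL^2(\Omega))}=o(\tau)$. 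A Gr\"onwall-type energy estimate applied to the $\B{r}_\tau$-equation then gives $\|\B{r}_\tau\|_{L^2(0,T;\BL^2(\Omega))}=o(\tau)$, i.e., G\^{a}teaux differentiability in $L^2(0,T;\BL^2(\Omega))$. The corresponding convergence in $\BV_T$ is obtained by upgrading the estimate to the $\BBX_T$-norm and invoking \eqref{embed2}.

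For continuity of $\Phi\mapsto\BS'(\Phi)$, take $\Phi_n\to\Phi$ in $\BU_0^\eta$. The Lipschitz bound on $\BS$ together with the compact embedding \eqref{embed} implies $\BS(\Phi_n)\to\BS(\Phi)$ in $\BV_T$, so $\BF'(\BS(\Phi_n))$ converges uniformly on $\overline{Q}_T$ to $\BF'(\BS(\Phi))$. Subtracting the two linearized equations driven by a common direction $h$ with $\|h\|_\BU\leqslant 1$ and applying a Gr\"onwall-type energy argument then yields $\sup_{\|h\|_\BU\leqslant 1}\|\BS'(\Phi_n)h-\BS'(\Phi)h\|_{L^2(0,T;\BL^2(\Omega))}\to 0$, i.e., operator-norm continuity of the derivative. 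The statement for the $\BV_T$-valued version follows along the same lines by energy-lifting to $\BBX_T$.

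The main obstacle I anticipate is the remainder estimate in the second step. The nonlinearity $q\mapsto q|q|$ is only $C^1$ (its derivative $2|q|$ is Lipschitz but not differentiable), so one cannot extract an $O(\tau^2)$ Taylor bound on $R_\tau$; the argument must instead deliver the sharp $o(\tau)$ bound on its $L^2$-norm. This relies crucially on the uniform-in-$(t,\B{x})$ convergence $\B{v}_\tau\to\B{v}$, which is available only because the solutions live in the state-constraint-compatible set $\BBX_T^\BCK$ (so the pressure stays uniformly away from zero) and because of the compact embedding \eqref{embed}. Without both ingredients one would obtain only an $O(\tau)$ bound on $R_\tau$, which would be insufficient to conclude differentiability.
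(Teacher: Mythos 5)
Your proposal is correct in substance, but it proves the central step by a genuinely different argument than the paper. Both proofs linearize around $\BS(\Phi)$ and use the same linearized Cauchy problem (your $\BF'(\B{v})(\B{z}+\BB_1\B{h})$ is exactly the paper's $\B{f}_\Phi(\B{h})$) together with the Lipschitz continuity of $\BS$ in $\BBX_{T_c}(\BH^1(\Omega),\BL^2(\Omega))$ from Lemma \ref{Lip}. The paper (Theorem \ref{diffe}), however, never estimates the Taylor remainder: it uses the Lipschitz bound to get uniform boundedness of the difference quotients in $\BBX_{T_c}$, extracts a strongly convergent subsequence in $L^2(0,T_c;\BL^2(\Omega))\cap\BV_{T_c}$ via the compact embedding $\BBY_{T_c}^\infty\hookrightarrow\BV_{T_c}$, identifies the limit as the unique solution of the linearized problem through a variation-of-parameters formula, and concludes with Uryson's subsequence principle. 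You instead posit the candidate derivative a priori and control $\B{r}_\tau$ by a mean-value/modulus-of-continuity bound on $\BF'$ plus Gr\"onwall, obtaining the $o(\tau)$ rate directly in $C([0,T];\BL^2(\Omega))$. This is legitimate and more quantitative for the $L^2$ statement; your observation that only $o(\tau)$ (not $O(\tau^2)$) is available because $q\mapsto q|q|$ is merely $C^1$, and that the uniform positivity of the pressure on the suitable set $\BCK$ is what makes $\BF'$ uniformly continuous, is exactly the right point. Two caveats. First, Proposition \ref{green22} only gives the linearized solution in $C([0,T_c];\BL^2(\Omega))$ (the forcing is merely $\BL^2$-valued), so your claim that $\B{z}\in\BBX_T(\BH^1(\Omega),\BL^2(\Omega))$ is not justified, and a direct ``energy upgrade'' of the remainder estimate to the $\BBX_T$-norm is therefore not available; for the $\BV_T$-valued statement you end up needing the paper's compactness device anyway (uniform $\BBX_{T_c}$-bound on the difference quotients, compact embedding, identification of the limit in the weaker topology, Uryson). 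Second, the uniform convergence $\B{v}_\tau\to\B{v}$ in $C(\overline{Q}_T)$ already follows from the continuous embedding $\BBX_{T_c}\hookrightarrow C(\overline{Q}_T)$ combined with the Lipschitz bound; no compactness is needed at that point.
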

Next, we want to characterize the derivative $\BS'(\Phi)$. For $\Phi \in \BU^{\mathrm{ad}}$ and $\BCK$ a suitable compact set, let $\BBS(\Phi) = ((p^k(\Phi),q^k(\Phi))\kk{k} \in \BBX_{T}^\BCK(\BH^1(\Omega),\BL^2(\Omega))$ and define 
$$\calF^k(\Phi) \coloneqq 
\begin{bmatrix}
	0 & 0 \\ 
	\beta_k \dfrac{q^k(\Phi)|q^k(\Phi)|}{p^k(\Phi)^2} & -2\beta_k \dfrac{|q^k(\Phi)|}{p^k(\Phi)}
\end{bmatrix}$$ 
as well as $\BFF(\Phi) \coloneqq (\calF^k(\Phi))\kk{k}.$ Now, for a given $\Phi \in \BU_0^\eta$ and $\B{h} \in \BU_0$, we let $\tilde \Phi \coloneqq \Phi + \Phi^\mathrm{e}$ and define the affine map $\B{f}_\Phi(\B{h}): [0,T] \times \BL^2(\Omega) \to \BL^2(\Omega)$ as $\B{f}_\Phi(\B{h})(t,\B{w}) \coloneqq \BFF(\tilde \Phi)\B{w} + (\BB + \BFF(\tilde \Phi)\BB_1)\B{h}$. 
\begin{theorem}\label{diffe_main}
	For each $\Phi \in \BU_0^\eta$ we have that $\BS'(\Phi,\cdot)$ is a linear and continuous operator from $\BU_0$ to $L^2(0,T;\BL^2(\Omega))\cap \BV_T$, i.e.,  $\BS'(\Phi,\cdot)\in \calL(\BU_0,L^2(0,T;\BL^2(\Omega))\cap \BV_T)$ given by
	\begin{equation}
		\BBS'(\Phi,\B{h}) = \B{w}_\Phi(\B{h}) + \BB_1\B{h}, \qquad \B{h} \in \BU_0,
	\end{equation}
	where $\B{w}_\Phi(\B{h})$ is the solution of the first-order system
	\begin{subnumcases}{\label{dern1}}
		\B{w}_t = (\BA+\BP)\B{w}+\B{f}_\Phi(\B{h})(t,\B{w}),\label{derna} \\[2mm]
		\B{w}(0) = 0.\label{dernb} 
	\end{subnumcases} 
	Moreover, the map $\Phi \to \BS'(\Phi,\cdot)$ is (Lipschitz) continuous from $\BU_0^\eta$ to $\calL(\BU_0,L^2(0,T;\BL^2(\Omega))).$
\end{theorem}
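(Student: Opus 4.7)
The plan is to (a) establish well-posedness of the linearized Cauchy problem \eqref{dern1} and observe that $\B{h}\mapsto \B{w}_\Phi(\B{h})+\BB_1\B{h}$ is a bounded linear map into $L^2(0,T;\BL^2(\Omega))\cap \BV_T$; (b) show that the difference quotient of $\BBS$ converges to this map by direct comparison with \eqref{dern1}; and (c) derive the Lipschitz dependence on $\Phi$ from an analogous energy estimate on the difference of two linearized solutions.

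For step (a), $\BA$ generates a $C_0$--group on $\BL^2(\Omega)$ by Theorem \ref{gen}, and a bounded-perturbation argument adds $\BP$ without altering the group property. Since $\BBS(\tilde\Phi)$ takes values in the suitable compact set $\BCK$, each $p^k$ is bounded below by $a_k>0$, so the coefficients of $\BFF(\tilde\Phi)$ are uniformly bounded on $\overline Q_T$, and therefore $\B{f}_\Phi(\B{h})(t,\cdot)$ is affine and globally Lipschitz on $\BL^2(\Omega)$ uniformly in $t$. Standard semilinear semigroup theory \cite{P} then yields a unique mild solution $\B{w}_\Phi(\B{h})\in C([0,T];\BL^2(\Omega))$; because $\B{h}\in\BU_0\subset [H^2(0,T)]^{2m}$ makes $(\BB+\BFF(\tilde\Phi)\BB_1)\B{h}\in H^1(0,T;\BL^2(\Omega))$, a bootstrapping argument in $\calD(\BA)$ upgrades $\B{w}_\Phi(\B{h})$ to a classical solution in $\BBX_T(\BH^1(\Omega),\BL^2(\Omega))$. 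Linearity of $\B{h}\mapsto \B{w}_\Phi(\B{h})$ is immediate from the linear structure of both $\B{f}_\Phi$ in $\B{h}$ and \eqref{derna}, and continuity in the $L^2(0,T;\BL^2(\Omega))\cap \BV_T$ norm follows from Gr\"onwall estimates together with the compact embedding \eqref{embed}.

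For step (b), fix $\B{h}\in \BU_0$ and take $\tau>0$ small enough that $\Phi+\tau\B{h}\in \BU_0^\eta$. Write $\B{v}_\tau:=\BBS(\tilde\Phi+\tau\B{h})$ and $\B{v}:=\BBS(\tilde\Phi)$, both of which take values in $\BCK$, and set $\B{z}_\tau:=(\B{v}_\tau-\B{v})/\tau - \BB_1\B{h}$. A direct subtraction of the two versions of \eqref{abs1} and division by $\tau$ reveals $\B{z}_\tau(0)=0$ and
\begin{equation*}
\B{z}_{\tau,t} = (\BA+\BP)\B{z}_\tau + \B{f}_\Phi(\B{h})(t,\B{z}_\tau) + \tfrac{1}{\tau}\B{r}_\tau,\qquad \B{r}_\tau := \BF(\B{v}_\tau)-\BF(\B{v})-\BFF(\tilde\Phi)(\B{v}_\tau-\B{v}).
\end{equation*}
Subtracting \eqref{dern1} and applying a Gr\"onwall argument in $\BL^2(\Omega)$ then yields
\begin{equation*}
\|\B{z}_\tau-\B{w}_\Phi(\B{h})\|_{C([0,T];\BL^2(\Omega))} \le C_T\,\|\B{r}_\tau/\tau\|_{L^1(0,T;\BL^2(\Omega))}.
\end{equation*}
Because $q\mapsto q|q|$ is $C^1$ with continuous derivative $2|q|$ and $p$ stays uniformly positive on $\BCK$, the map $\BF$ is $C^1$ in a neighbourhood of $\BCK$ with Jacobian $\BFF$, and Taylor's theorem gives $|\B{r}_\tau|\le \omega(|\B{v}_\tau-\B{v}|)\,|\B{v}_\tau-\B{v}|$ with a modulus $\omega(s)\to 0$ as $s\to 0$. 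A preliminary Lipschitz estimate $\|\B{v}_\tau-\B{v}\|_{C([0,T];\BL^2(\Omega))}\le C\tau\|\B{h}\|_\BU$ for $\BBS$, obtained by the analogous Gr\"onwall argument applied directly to $\B{v}_\tau-\B{v}$, then forces $\|\B{r}_\tau/\tau\|_{L^1(0,T;\BL^2(\Omega))}\le C\,\omega(C\tau)\to 0$, which establishes Gateaux differentiability into $L^2(0,T;\BL^2(\Omega))$. Since $\{\B{z}_\tau\}$ is also uniformly bounded in $\BBX_T(\BH^1(\Omega),\BL^2(\Omega))$ (by a higher-order version of the same energy estimate), the compact embedding \eqref{embed} and uniqueness of the limit promote the convergence to $\BV_T$ as well.

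For step (c), let $\Phi_1,\Phi_2\in\BU_0^\eta$ and $\B{h}\in\BU_0$; subtracting the corresponding instances of \eqref{dern1} shows that $\B{w}_{\Phi_2}(\B{h})-\B{w}_{\Phi_1}(\B{h})$ solves an analogous Cauchy problem with zero initial datum and forcing $(\BFF(\tilde\Phi_2)-\BFF(\tilde\Phi_1))(\B{w}_{\Phi_1}(\B{h})+\BB_1\B{h})$. Lipschitz dependence of $\BFF$ on its state arguments (valid on $\BCK$ since $p$ is bounded below) combined with the Lipschitz continuity of $\BBS$ from $\BU$ into $C([0,T];\BL^2(\Omega))$ yields $\|\BFF(\tilde\Phi_2)-\BFF(\tilde\Phi_1)\|_{L^\infty(Q_T)}\le C\|\Phi_2-\Phi_1\|_\BU$, and a final Gr\"onwall estimate gives the claimed Lipschitz bound of $\Phi\mapsto \BS'(\Phi,\cdot)$ into $\calL(\BU_0,L^2(0,T;\BL^2(\Omega)))$. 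The main technical obstacle is precisely the remainder estimate in step (b): because $q|q|$ is $C^1$ but not $C^2$, only a modulus-of-continuity bound (not a quadratic one) is available on $\BCK$, which is exactly why Gateaux---rather than Fr\'echet---differentiability is the natural conclusion, and why the uniform distance from the boundary of the domain of $\BF$ provided by the \emph{suitable} set $\BCK$ is used crucially.
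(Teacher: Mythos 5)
Your argument is correct in substance, but the heart of it --- step (b) --- follows a genuinely different route from the paper. The paper never forms a remainder $\B{r}_\tau$: it takes a sequence $t_n\downarrow 0$, uses the Lipschitz continuity of $\BS$ into $\BBX_{T_c}(\BH^1(\Omega),\BL^2(\Omega))$ (Lemma \ref{Lip}) to bound the difference quotients uniformly in that space, extracts via the compact embedding $\BBY_{T_c}^\infty(\BH^1(\Omega),\BL^2(\Omega))\hookrightarrow \BV_{T_c}$ a strongly convergent subsequence, passes to the limit in the variation-of-parameters formula to identify the limit as the unique solution of \eqref{dern1}, and then invokes Uryson's subsequence principle. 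Your approach instead compares $\B{z}_\tau=(\B{v}_\tau-\B{v})/\tau-\BB_1\B{h}$ directly with $\B{w}_\Phi(\B{h})$ and controls the Taylor remainder of $\BF$ by a modulus of continuity of $\BFF$ on the convex cells $\BCK_k$. This is a legitimate trade: your argument is quantitative (it yields a rate $\omega(C\tau)$ and avoids any subsequence extraction for the $L^2$ convergence), whereas the paper's soft argument only needs convergence of the difference quotients of $\BF$ along strongly convergent sequences and never has to make the pointwise $C^1$ structure of $q\mapsto q|q|$ explicit. Steps (a) and (c) coincide with the paper's auxiliary propositions (existence via Proposition \ref{green22}, linearity/boundedness of $\BE_\Phi$, and Lipschitz continuity of $\Phi\mapsto\BE_\Phi$, all by Gr\"onwall).

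One precision you should add: to conclude $\|\B{r}_\tau/\tau\|_{L^1(0,T;\BL^2(\Omega))}\lesssim \omega(C\tau)$ you need the \emph{pointwise} bound $\|\B{v}_\tau-\B{v}\|_{C(\overline{Q}_T)}\leqslant C\tau\|\B{h}\|_{\BU}$ inside the argument of $\omega$, not merely the $C([0,T];\BL^2(\Omega))$ estimate you quote; the former is available because Lemma \ref{Lip} gives Lipschitz continuity of $\BBS$ into $\BBX_{T_c}(\BH^1(\Omega),\BL^2(\Omega))$, which embeds continuously into $C(\overline{Q}_T)$. With that correction the modulus-of-continuity estimate closes, since each segment joining $\B{v}_\tau(t,x)$ and $\B{v}(t,x)$ stays in the convex set $\BCK_k$ where $p$ is bounded below by $a_k>0$.
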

Next, we establish the existence of adjoint states which allows us to characterize the gradient of the cost function $J$. First, for $\Phi \in \BU^{\mathrm{ad}}$ and $\BCK$ a suitable compact set define $\BFF_\square(\Phi) \coloneqq \BFF(\Phi)^\ast$ where the latter is taken as a (bounded) operator from $\BL^2(\Omega)$ to itself. Further define the affine map $\B{g}_\Phi: [0,T] \times \BL^2(\Omega) \to \BL^2(\Omega)$ as $\B{g}_\Phi(t,\B{p}) \coloneqq \BFF_\square(\tilde\Phi)\B{p} + \BS(\Phi) - \B{v}_d, (\tilde \Phi = \Phi + \Phi^\mathrm{e}).$
\begin{proposition}\label{adjoint} There exists a unique solution $\B{p} \coloneqq \B{p}_\Phi \in C([0,T];\BL^2(\Omega))$, called the {\bf adjoint state}, to the problem
	\begin{subnumcases}{\label{dder6}}
		-\B{p}_t = (-\BA+\BP^\ast)\B{p}+\B{g}_\Phi(t,\B{p}), \label{dder7} \\[2mm]
		\B{p}(T) = 0.\label{dder8} 
	\end{subnumcases} 
\end{proposition}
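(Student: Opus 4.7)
The plan is to reduce the terminal-value problem \eqref{dder6}--\eqref{dder8} to a standard forward mild-solution problem on the Hilbert space $\BL^2(\Omega)$ and then invoke semigroup perturbation theory together with a contraction argument. Setting $\B{q}(\tau) := \B{p}(T-\tau)$, the system becomes
\begin{equation*}
    \B{q}_\tau = (-\BA + \BP^\ast)\B{q} + \BFF_\square(\tilde\Phi)(T-\tau)\B{q} + \B{r}(\tau), \qquad \B{q}(0) = 0,
\end{equation*}
with $\B{r}(\tau) := (\BS(\Phi)-\B{v}_d)(T-\tau,\cdot) \in C([0,T];\BL^2(\Omega))$. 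Any solution of this forward problem yields a solution of the original one by reversing time, and vice-versa.

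First, I would use Theorem \ref{gen} to note that $\BA$ is skew-adjoint, hence so is $-\BA$, which therefore generates a unitary $C_0$-group on $\BL^2(\Omega)$. Since $\calD(\BP)=\BL^2(\Omega)$ by construction, $\BP$ is bounded and so is its Hilbert-space adjoint $\BP^\ast$. By the bounded-perturbation theorem for generators, $-\BA + \BP^\ast$ generates a $C_0$-semigroup $\{\calT(\tau)\}_{\tau \geqslant 0}$ on $\BL^2(\Omega)$ with $\|\calT(\tau)\|_{\calL(\BL^2(\Omega))} \leqslant e^{\|\BP^\ast\|\tau}$. Second, I would verify that $\tau \mapsto \BFF_\square(\tilde\Phi)(T-\tau)$ is uniformly bounded in $\calL(\BL^2(\Omega))$. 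This is the one place where the suitability of $\BCK$ really enters: since $\BBS(\tilde\Phi)$ takes values in $\BCK$, the inclusion $\mathcal{K}_k^{\mathrm{p}} \subseteq \BR_+^\ast$ in Definition \ref{suitable} gives $p^k(t,\B{x}) \geqslant a_k > 0$ and $|q^k(t,\B{x})| \leqslant \max(|c_k|,|d_k|)$ pointwise on $\overline{Q}_T$. Consequently every entry of the matrix $\calF^k(\tilde\Phi)$ defining $\BFF_\square$ lies in $L^\infty(Q_T)$, so $\BFF_\square(\tilde\Phi)(\cdot)$ acts as a multiplication operator on $\BL^2(\Omega)$ with norm bounded by some constant $M = M(\beta,\BCK)$ uniformly in $\tau \in [0,T]$.

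Third, I would apply the classical mild-solution argument. Define the affine map $\calJ : C([0,T];\BL^2(\Omega)) \to C([0,T];\BL^2(\Omega))$ by
\begin{equation*}
    (\calJ\B{q})(\tau) := \int_0^\tau \calT(\tau-s)\bigl[\BFF_\square(\tilde\Phi)(T-s)\B{q}(s) + \B{r}(s)\bigr]\,ds,
\end{equation*}
and establish inductively that
\begin{equation*}
    \|\calJ^n\B{q}_1 - \calJ^n\B{q}_2\|_{C([0,T];\BL^2(\Omega))} \leqslant \frac{\bigl(M e^{\|\BP^\ast\|T}\,T\bigr)^n}{n!}\,\|\B{q}_1-\B{q}_2\|_{C([0,T];\BL^2(\Omega))}.
\end{equation*}
For $n$ sufficiently large, $\calJ^n$ is a strict contraction, so by the standard extension of Banach's fixed-point theorem, $\calJ$ admits a unique fixed point $\B{q}\in C([0,T];\BL^2(\Omega))$. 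Setting $\B{p}(t) := \B{q}(T-t)$ yields the desired unique mild solution of \eqref{dder6}--\eqref{dder8}; uniqueness also follows directly by applying Gronwall's inequality to the difference of two solutions in the integral formulation.

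The main delicate point is the boundedness of $\BFF_\square(\tilde\Phi)$ asserted above: without the pressure staying uniformly away from zero (which is precisely what the suitability of $\BCK$ secures, and which is in turn enforced by Theorem \ref{mainwpp} through $\BS(\Phi) \in \BBX_T^{\BCK}$), the multiplication operator would not act boundedly on $\BL^2(\Omega)$ and the perturbation argument would break down. Once this uniform $L^\infty$-control is in hand, the rest is routine linear semigroup theory for evolution equations with a bounded, strongly measurable, time-dependent perturbation and a continuous $\BL^2$-valued forcing term.
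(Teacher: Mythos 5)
Your proposal is correct and follows essentially the same route as the paper: the paper verifies that $\B{g}_\Phi$ is continuous in $t$ and uniformly Lipschitz in the second variable (via the bound \eqref{Fineqn} on $\BFF_\square(\tilde\Phi)$ as a multiplication operator, which rests on the same suitability of $\BCK$ you highlight) and then invokes Proposition \ref{green22}, i.e.\ the classical Pazy result for semilinear Cauchy problems with the generator $-\BA+\BP^\ast$. Your time-reversal plus iterated-contraction argument is precisely the standard proof of that cited theorem, so you have simply unpacked the reference rather than taken a different path.
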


Utilizing $\B{p}_\Phi$, we obtain the following representation of the gradient $J'(\Phi)$.
\begin{corollary}\label{gradient_mrd}
	Let $\Phi \in \BU_0^\eta$ and $\BCK$ be a suitable compact set. Let $\B{p}_\Phi \in C([0,T];\BL^2(\Omega))$ be the adjoint state of Proposition \ref{adjoint}. Then the following formula holds:
	\begin{equation}\label{jgrad_mrd}
		J'(\Phi) = (\BB^\ast + \BB_1^\ast\BFF_\square(\tilde\Phi))\B{p}_\Phi - \BB_1^\ast(\BS(\Phi) - \B{v}_d) + \sigma \Phi,
	\end{equation}
	where $\BB^\ast$ and $\BB_1^\ast$ are the adjoints of $\BB and \BB_1$, when both are considered as bounded operators from $\BU$ to $L^2(0,T;\BL^2(\Omega))$. 
\end{corollary}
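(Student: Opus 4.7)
The plan is to carry out a standard adjoint-based calculation, justified by the differentiability result from Theorem \ref{diffe_main} and the well-posedness of the adjoint equation from Proposition \ref{adjoint}. First I would apply the chain rule to
\[
J(\Phi) = \tfrac{1}{2}\|\BS(\Phi)-\B{v}_d\|_{L^2(0,T;\BL^2(\Omega))}^2 + \tfrac{\sigma}{2}\|\Phi\|_\BU^2,
\]
using $\BS'(\Phi)\B{h} = \B{w}_\Phi(\B{h}) + \BB_1\B{h}$, to arrive at
\[
\langle J'(\Phi),\B{h}\rangle_\BU = \int_0^T\!\!\langle \BS(\Phi)-\B{v}_d,\, \B{w}_\Phi(\B{h}) + \BB_1\B{h}\rangle_{\BL^2(\Omega)}\,dt + \sigma\langle \Phi,\B{h}\rangle_\BU, \qquad \B{h}\in\BU_0.
\]
The goal is then to replace the implicit $\B{w}_\Phi(\B{h})$-term by an explicit pairing involving $\B{p}_\Phi$ and $\B{h}$.

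Second, I would establish the duality identity by testing the linearization \eqref{dern1} against $\B{p}_\Phi$ and the adjoint system \eqref{dder6} against $\B{w}_\Phi(\B{h})$, and subtracting. Integrating by parts in time using the endpoint conditions $\B{w}_\Phi(0)=0$ and $\B{p}_\Phi(T)=0$ so that the time-boundary contributions vanish, and transferring $\BA+\BP+\BFF(\tilde\Phi)$ to its $\BL^2$-adjoint $-\BA+\BP^\ast+\BFF_\square(\tilde\Phi)$, all symmetric terms cancel and one is left with
\[
\int_0^T\!\!\langle \B{w}_\Phi(\B{h}),\BS(\Phi)-\B{v}_d\rangle_{\BL^2(\Omega)}\,dt = \int_0^T\!\!\langle (\BB+\BFF(\tilde\Phi)\BB_1)\B{h},\B{p}_\Phi\rangle_{\BL^2(\Omega)}\,dt.
\]
Substituting this back into the expression for $\langle J'(\Phi),\B{h}\rangle_\BU$ and moving the bounded operators $\BB$, $\BB_1$ and $\BFF_\square(\tilde\Phi)$ to the left slot of the pairing via their adjoints identifies the Riesz representative of $J'(\Phi)$ as the right-hand side of \eqref{jgrad_mrd}.

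The main obstacles I anticipate are twofold. First, the operator $\BB = \BB_0 + \BP\BB_1 - D_t\BB_1$ contains a temporal derivative, so computing $\BB^\ast:L^2(0,T;\BL^2(\Omega))\to\BU$ requires an additional integration by parts in time; here the conditions $\B{h}(0)=0$ (built into $\BU_0$) and $\B{p}_\Phi(T)=0$ are precisely what annihilates the resulting time-boundary contributions, so that $\BB^\ast\B{p}_\Phi$ is well defined and compatible with the $\BU$-duality. Second, since $\B{w}_\Phi(\B{h})$ and $\B{p}_\Phi$ are mild (not classical) solutions of \eqref{dern1} and \eqref{dder6}, the integration-by-parts steps cannot be performed literally; the standard remedy is to approximate initial/final data in $\calD(\BA)$ using the $C_0$-group property from Theorem \ref{gen} (or equivalently to test the Duhamel representations of $\B{w}_\Phi(\B{h})$ and $\B{p}_\Phi$ against each other) and then pass to the limit. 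The uniform $\BL^2(\Omega)$-boundedness of $\BFF_\square(\tilde\Phi)$ needed to pass to the limit follows from the strictly positive pressure lower bound enforced by the suitability of $\BCK$, which keeps the denominator in $\BFF$ uniformly away from zero on $\overline{Q}_T$.
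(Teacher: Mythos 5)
Your proposal is correct and follows essentially the same route as the paper: chain rule on $J$, the splitting $\BS'(\Phi)\B{h}=\B{w}_\Phi(\B{h})+\BB_1\B{h}$, and the duality identity between the linearized state equation and the adjoint equation to eliminate $\B{w}_\Phi(\B{h})$. The only difference is that the duality identity you re-derive by hand (testing, integrating by parts, and approximating mild solutions) is exactly the Green-type formula the paper has already packaged as Proposition \ref{green22} (citing Pazy), so the paper's proof reduces to a short computation invoking that proposition together with $(\BFF_\square(\tilde\Phi)\B{p},\B{w})=(\B{p},\BFF(\tilde\Phi)\B{w})$.
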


Our final goal is the characterization of an optimal control according to Theorem \ref{existoptimal} via first-order optimality conditions. Notice that if there are no state constraints, i.e. $\BCK = \BK$ in Theorem \ref{mainwpp}, then such conditions follow directly from Fermat's theorem and \eqref{jgrad_mrd}. %We do not pursue this any further in this paper.
%Instead, we assume non-redundant state contraints and use the notion of {\bf normality} introduced e.g. in \cite{B-C2,B-C,C2}. 
However, the case of non-redundant state constraints is significantly more delicate. For deriving first-order conditions in this case we rely on the notion of {\bf normality} (see, e.g., \cite{B-C2,B-C,C2}) which we introduce next in our context. 

Notice first that problem \eqref{minp4} is of the form 
\cite[Problem Q, p. 1001]{C}.  Then, for any given locally optimal control $\Phi^\ast \in \BU_0^\eta$, it follows by \cite[Theorem 5.2, p. 1001]{C} that 
\begin{itemize}
	\item[\bf (i)] $\Phi^\ast$ is feasible, i.e., $\Phi^\ast \in \BU_0^\eta$ and $\BS(\Phi) \in \BV_+^2$;
	\item[\bf (ii)] there exists an unique adjoint state $\B{p}_{\Phi^\ast}$ associated to $\Phi^\ast;$
	\item[\bf (iii)] there exists a measure $\Lambda \in \calM(Q_T)^2$ such that the inequality 
	\begin{equation}\label{measure_ine}
		\int_{Q_T^2} (\B{y} - \BS(\Phi))d\Lambda \leqslant 0
	\end{equation}
	holds for all $\B{y} \in \BV_+^2$;
	\item[\bf (iv)] there exists a real number $\zeta \geqslant 0$ such that the inequality 
	\begin{align}\label{main_var_ine_mrd}
		\zeta\left((\BB^\ast + \BB_1^\ast\BFF_\square(\Phi^\ast))\B{p}_{\Phi^\ast} - \BB_1^\ast(\BS(\Phi^\ast) - \B{v}_d) + \sigma \Phi, \Phi-\Phi^\ast\right) \nonumber \\ + \int_{Q_{T}^2} \BS'(\Phi^\ast,(\Phi-\Phi^\ast))d\Lambda \geqslant 0
	\end{align}
	holds for all $\Phi \in \BU_0^\eta.$
\end{itemize}

\begin{definition}
	We say that problem \eqref{minp4} is {\bf normal} at $\Phi^\ast$ provided we can take $\zeta = 1$ in \emph{\eqref{main_var_ine_mrd}}.
\end{definition}

Most of the sufficient conditions to ensure normality involve some sort of regularity condition of the feasible set, a so-called {\it constraint qualification}. The {\bf Robinson-Zowe-Kurczyusz (RZK) regularity condition} is particularly useful for our purposes. It requires the existence of a direction $\B{h} \in \BU_0$ such that 
\begin{equation}\label{rob_CQ}
	\BS(\Phi^\ast) + \BS'(\Phi^\ast,\B{h}) \in \mathrm{int}(\BV_+^2).
\end{equation}
It is slightly weaker than Slater's condition which requires $\BS(\Phi^\ast) \in \rm{int}(\BV_+^2)$ or, for instance, the surjectivity of $\BS'(\Phi^\ast,\cdot)$, both of which imply \eqref{rob_CQ}, but none of these can be guaranteed in our setting. Hence, in general, one cannot guarantee that our problem is normal. We can, however, prove that normality holds almost surely in the sense explained next. 

Since $\mathrm{int}(\BV_{T,\BK}^\mathrm{ad}) \neq \emptyset$, let $\B{v}_0 \in \mathrm{int}(\BV_{T,\BK}^\mathrm{ad})$ and for every $\delta > 0$ define the perturbed convex set 
\begin{equation}\label{Cdelta}
	\BV_{\BK}^\delta \coloneqq (1-\delta)\B{v}_0 + \delta \BV_{T,\BK}^\mathrm{ad}.
\end{equation} 
Now, consider the perturbed problem 
\begin{equation}
	\begin{aligned}
		\min\limits \ &  J(\Phi) \ \mathrm{s.t} \ \Phi \in \BU_0^\eta \ \mathrm{and} \ \BS(\Phi) \in \BV_{\BK}^\delta.\\
	\end{aligned}
	\tag{${rP}_{T}^{\BK,\delta}$}\label{minpp4}
\end{equation}
We then have the following general result, which is an adaptation of \cite[Theorem 2.2, p. 73]{B-C2}.

\begin{theorem}\label{asdelta}
	Let $I \subseteq \BR_+^\ast$ be an interval such that for all $\delta \in I$ the perturbed problem \eqref{minpp4} admits a solution $\Phi_\delta$. Then problem \eqref{minpp4} is normal at $\Phi_\delta$ for almost every $\delta \in I$.
\end{theorem}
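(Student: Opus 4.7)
My plan is to adapt the sensitivity argument of Bonnans--Casas through the value function $V\colon I \to \BR$ defined by $V(\delta) := J(\Phi_\delta)$. First I would establish that $V$ is monotone non-increasing and hence, by the Lebesgue differentiation theorem for monotone functions, differentiable at almost every $\delta \in I$. Then, at every differentiability point, I would derive normality by contradiction, exploiting the interior point $\B{v}_0$ of the admissible state set.

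For the monotonicity step, I would show that $\BV_\BK^\delta \subseteq \BV_\BK^{\delta'}$ whenever $0 < \delta \leq \delta'$ in $I$. Given any $\B{v} = (1-\delta)\B{v}_0 + \delta\B{w} \in \BV_\BK^\delta$ with $\B{w} \in \BV_{T,\BK}^{\mathrm{ad}}$, the identity
$$\B{v} = (1-\delta')\B{v}_0 + \delta'\Bigl(\tfrac{\delta'-\delta}{\delta'}\B{v}_0 + \tfrac{\delta}{\delta'}\B{w}\Bigr)$$
together with convexity of $\BV_{T,\BK}^{\mathrm{ad}}$ yields $\B{v} \in \BV_\BK^{\delta'}$, so the feasible set of \eqref{minpp4} grows with $\delta$ and $V(\delta') \leq V(\delta)$. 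Call $I^\ast \subseteq I$ the full-measure set on which $V$ is differentiable.

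For the normality step, fix $\delta^\ast \in I^\ast$ and suppose, for contradiction, that \eqref{minpp4} is abnormal at $\Phi_{\delta^\ast}$, so that \eqref{main_var_ine_mrd} can only hold with $\zeta = 0$. Then (iii)--(iv) supply a nontrivial $\Lambda \in \calM(Q_T)^2$, which we normalize by $\|\Lambda\|_{\calM} = 1$, lying in the normal cone of $\BV_\BK^{\delta^\ast}$ at $\BS(\Phi_{\delta^\ast})$ and satisfying $\int \BS'(\Phi_{\delta^\ast}, \Phi - \Phi_{\delta^\ast})\,d\Lambda \geq 0$ for every $\Phi \in \BU_0^\eta$. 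Plugging $\B{y} := \BS(\Phi_{\delta^\ast - \epsilon}) \in \BV_\BK^{\delta^\ast - \epsilon} \subseteq \BV_\BK^{\delta^\ast}$ into (iii) and $\Phi := \Phi_{\delta^\ast - \epsilon}$ into (iv), together with the expansion $\BS(\Phi_{\delta^\ast - \epsilon}) - \BS(\Phi_{\delta^\ast}) = \BS'(\Phi_{\delta^\ast}, \Phi_{\delta^\ast - \epsilon} - \Phi_{\delta^\ast}) + o(\|\Phi_{\delta^\ast - \epsilon} - \Phi_{\delta^\ast}\|_{\BU})$ from Theorem \ref{diffe_main}, gives a first-order control of $V(\delta^\ast - \epsilon) - V(\delta^\ast)$. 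Symmetrically, using $\BV_\BK^{\delta^\ast + \epsilon} = \BV_\BK^{\delta^\ast} + \epsilon(\BV_{T,\BK}^{\mathrm{ad}} - \B{v}_0)$ and $\B{v}_0 \in \mathrm{int}(\BV_{T,\BK}^{\mathrm{ad}})$---which, combined with $\|\Lambda\|_{\calM} = 1$, forces $\int (\B{w}^\star - \B{v}_0)\,d\Lambda \geq c_0$ for some $\B{w}^\star \in \BV_{T,\BK}^{\mathrm{ad}}$ and a fixed $c_0 > 0$---produces a lower bound of the form $V(\delta^\ast) - V(\delta^\ast + \epsilon) \geq c_0\,\epsilon + o(\epsilon)$. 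Comparing the two one-sided estimates forces a strictly positive jump between the one-sided derivatives of $V$ at $\delta^\ast$, contradicting differentiability.

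The main technical obstacle is precisely the quantitative sensitivity estimate in the previous paragraph: translating the abstract abnormality into a lower bound on $|V(\delta^\ast \pm \epsilon) - V(\delta^\ast)|/\epsilon$ that is uniform in $\epsilon$. The difficulty is that $\BS$ is nonlinear and $\BS(\BU_0^\eta)$ does not, in general, cover $\BV_\BK^{\delta^\ast}$, so arbitrary elements of $\BV_\BK^{\delta^\ast \pm \epsilon}$ cannot be realized as states up to first order without further work. The resolution is to approximate the required state perturbations using the linearized sensitivity system \eqref{dern1} to feed into the $\calM$--$C$ duality, and to use the interiority of $\B{v}_0$---which ensures that $\BV_{T,\BK}^{\mathrm{ad}} - \B{v}_0$ contains a $C$-neighborhood of the origin---so that the normalization $\|\Lambda\|_{\calM} = 1$ indeed forces the strictly positive gap $c_0 > 0$ that drives the contradiction.
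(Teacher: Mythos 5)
The paper itself gives no proof of Theorem \ref{asdelta}; it states the result as an adaptation of \cite[Theorem 2.2, p.~73]{B-C2} and leaves the argument to that reference. Your skeleton --- the non-increasing value function $V(\delta)=J(\Phi_\delta)$, Lebesgue's theorem on a.e.\ differentiability of monotone functions, and a contradiction at differentiability points driven by the interior point $\B{v}_0$ --- is precisely the Bonnans--Casas strategy, and your monotonicity step is correct and cleanly justified: $\BV_\BK^\delta\subseteq\BV_\BK^{\delta'}$ for $0<\delta\leqslant\delta'$ by convexity of $\BV_{T,\BK}^{\mathrm{ad}}$, so the feasible sets are nested and $V$ is non-increasing.

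The gap is in the contradiction step, and it is not merely technical. When $\zeta=0$, the system (iii)--(iv) degenerates to two inequalities involving only $\Lambda$, $\BS$ and $\BS'$; the cost $J$ and the adjoint state enter exclusively through the term multiplied by $\zeta$ and therefore drop out entirely. Consequently, substituting $\B{y}=\BS(\Phi_{\delta^\ast-\epsilon})$ into (iii) and $\Phi=\Phi_{\delta^\ast-\epsilon}$ into (iv) yields sign information about $\int(\BS(\Phi_{\delta^\ast-\epsilon})-\BS(\Phi_{\delta^\ast}))\,d\Lambda$ and nothing whatsoever about $J(\Phi_{\delta^\ast-\epsilon})-J(\Phi_{\delta^\ast})=V(\delta^\ast-\epsilon)-V(\delta^\ast)$; the ``first-order control'' of the value differences that you assert does not follow, and indeed nothing in (iii)--(iv) excludes $V$ being locally constant while the feasible set strictly enlarges. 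The missing idea is the converse implication ``calmness of $V$ at $\delta^\ast$ implies normality,'' obtained by exact penalization (equivalently, Ekeland's variational principle): if $\limsup_{\epsilon\downarrow 0}\,(V(\delta^\ast)-V(\delta^\ast+\epsilon))/\epsilon\leqslant M<\infty$, then --- using that $\B{v}_0\in\mathrm{int}(\BV_{T,\BK}^{\mathrm{ad}})$ gives $\BV_\BK^{\delta^\ast}+B_{\BV_T}(0,\rho\epsilon)\subseteq\BV_\BK^{\delta^\ast+\epsilon}$ --- any control with $J(\Phi)+K\,\mathrm{dist}_{\BV_T}\bigl(\BS(\Phi),\BV_\BK^{\delta^\ast}\bigr)<J(\Phi_{\delta^\ast})$ and $K\rho>M$ would be feasible for a slightly larger $\delta$ with a value drop violating calmness; hence $\Phi_{\delta^\ast}$ locally minimizes the penalized functional, whose first-order conditions (unconstrained in the state) produce a multiplier with $\zeta=1$. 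Taking the contrapositive, abnormality forces the right difference quotient of $V$ to be unbounded, which can happen only on the null set where the monotone function $V$ fails to have a finite derivative. You should replace your two one-sided estimates by this penalization (or Ekeland) step; as written, the decisive inequality $V(\delta^\ast)-V(\delta^\ast+\epsilon)\geqslant c_0\,\epsilon+o(\epsilon)$ is unsupported.
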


As a corollary, we have the final result of this paper.
\begin{theorem}\label{aseps}
	The problem \eqref{minp4} is {\bf almost surely} normal at $\Phi^\ast$ in the following sense: given $\varepsilon>0$, there exists $\delta >0$ such that $|1-\delta| < \varepsilon$ and problem \eqref{minpp4} is normal at $\Phi_\delta.$
\end{theorem}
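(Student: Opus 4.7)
The plan is to obtain Theorem \ref{aseps} as a direct corollary of Theorem \ref{asdelta}, whose hypothesis is the solvability of the perturbed family \eqref{minpp4} on some interval $I \subseteq \BR_+^\ast$. Given $\varepsilon>0$, I would construct an interval of the form $I = (1-\varepsilon', 1]$ with $0 < \varepsilon' \leqslant \varepsilon$ on which each \eqref{minpp4} admits a minimizer $\Phi_\delta$. Theorem \ref{asdelta} would then yield normality of \eqref{minpp4} at $\Phi_\delta$ for Lebesgue-a.e.\ $\delta \in I$; since the exceptional set has measure zero while $I$ itself has positive measure and accumulates at $1$, one can select $\delta \in I$ with $|1-\delta|<\varepsilon$ at which \eqref{minpp4} is normal, which is precisely the stated claim.

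To produce such an $I$, I would exhibit a feasible control for \eqref{minpp4} whenever $\delta$ is close to $1$. The natural candidate is $\Phi \equiv 0 \in \BU_0^\eta$, for which $\BS(0) = \BBS(\Phi^\mathrm{e}) = \B{v}_\mathrm{e}$. Definition \ref{suitable} together with Assumption \ref{SSnet} gives $\B{v}_\mathrm{e}(\B{x}) \in \mathrm{int}(\BCK)$ pointwise, and continuity of $\B{v}_\mathrm{e}$ on the compact set $\overline{\Omega}$ upgrades this to $\B{v}_\mathrm{e} \in \mathrm{int}(\BV_{T,\BK}^\mathrm{ad})$ in the $\BV_T$-topology. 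Setting $\B{y}_\delta \coloneqq \delta^{-1}(\B{v}_\mathrm{e} - (1-\delta)\B{v}_0)$, we have $\B{y}_\delta \to \B{v}_\mathrm{e}$ in $\BV_T$ as $\delta \to 1$, so $\B{y}_\delta \in \BV_{T,\BK}^\mathrm{ad}$ for $\delta$ sufficiently close to $1$. This means $\B{v}_\mathrm{e} = (1-\delta)\B{v}_0 + \delta \B{y}_\delta \in \BV_\BK^\delta$, so the zero control is feasible; choose any $\varepsilon' \in (0,\varepsilon]$ for which this holds uniformly on $I = (1-\varepsilon',1]$. Existence of $\Phi_\delta$ then follows by a verbatim repetition of the direct-method argument of Theorem \ref{existoptimal}: $\BU_0^\eta$ is weakly compact in $\BU_0$, $\BV_\BK^\delta$ is closed and convex in $\BV_T$ as an affine image of a closed convex set, the feasible set is therefore weakly closed thanks to weak-to-strong continuity of $\BS$, and $J$ is weakly lower semicontinuous.

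With the hypothesis of Theorem \ref{asdelta} verified on $I$, the conclusion is immediate as described in the first paragraph. The only genuine novelty of the proof is the feasibility step, so that is where I expect the main care to lie: identifying the interior of $\BV_{T,\BK}^\mathrm{ad}$ with respect to the uniform norm on $\overline{Q}_T$, which relies crucially on the positive interior gap furnished by Definition \ref{suitable} combined with the continuity of the steady state $\B{v}_\mathrm{e}$ on $\overline{\Omega}$. All deeper analytical content, namely the measure-theoretic mechanism that rules out $\zeta=0$ except on a Lebesgue-null subset of $I$, is packaged inside Theorem \ref{asdelta} and is inherited without additional work.
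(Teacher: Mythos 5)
Your proposal is correct and follows essentially the same route as the paper: invoke Theorem \ref{asdelta} on an interval of $\delta$'s accumulating at $1$ for which the perturbed problem is solvable, with solvability secured by exhibiting the zero control (whose state is $\B{v}_\mathrm{e}\in\mathrm{int}(\BV_{T,\BK}^{\mathrm{ad}})$) as a feasible point of \eqref{minpp4}. The paper's one-line proof simply takes $\B{v}_0=\B{v}_\mathrm{e}$ as the perturbation anchor, which makes $\B{v}_\mathrm{e}=(1-\delta)\B{v}_\mathrm{e}+\delta\B{v}_\mathrm{e}\in\BV_\BK^\delta$ trivially for every $\delta>0$, whereas your openness argument handles an arbitrary interior anchor $\B{v}_0$ at the cost of restricting to $\delta$ near $1$ --- a negligible difference.
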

\begin{proof}
	Apply Theorem \ref{asdelta} with problem \eqref{minp4} perturbed around $\B{v}_\mathrm{e} \in \mathrm{int}(\BV_{T,\BK}^\mathrm{ad}).$
\end{proof}
\begin{remark}
	Theorem \ref{aseps} would still be true if we did not have an interior feasible point to perturb $\BV_{T,\BK}^\mathrm{ad}$ with. Indeed we see, by inspection of the proof of existence in Theorem \ref{existoptimal}, that problem \eqref{minpp4} will have a solution as long as $\BV_{\BK}^\delta$ contains a feasible point. 
	Let $\B{v}_0$ be any interior point of $\BV_{T,\BK}^\mathrm{ad}$ (not necessarity feasible) and $\overline{\B{v}}$ be any feasible point (not necessarily interior). 
	
	Of course if $\delta = 1$ we have a feasible point: $\overline{\B{v}}.$ If $\delta \neq 1$, one way to guarantee nonemptiness of the feasible set is to find $\B{v} \in \BV_{T,\BK}^{ad}$ such that $(1-\delta)\B{v}_0 + \delta \B{v} = \overline{\B{v}}$ for which it suffices to show that $\delta^{-1}(\delta-1)\B{v}_0 + \delta^{-1}\overline{\B{v}} \in \BV_{T,\BK}^{ad}.$
	For $\delta > 1$ the above is true by convexity. One cannot go below $1$ unless we assume $\overline{\B{v}}$ is interior, but we do not need the latter as Theorem \ref{asdelta} already implies that normality holds for almost every $\delta \in [1,+\infty)$. 
\end{remark}

\section{Local well-posedness of the state system}\label{localwell}

In this section we show that the state system, i.e. the semilinear Euler system with the Kirchhoff law, admits a classical solution. Our proof strategy relies on the following two main steps: \begin{itemize}
	\item First we study the well-posedness of the linear system with fixed (or frozen) non-linearity, as well as its regularity and continuity properties;
	
	\item then, via an iterative process, we construct a sequence of solutions (to associated linear problems) that will converge to the solution of the general semilinear system, invoking certain structural conditions.
\end{itemize}

Due to the hyperbolic character of the system, shock discontinuities are likely to occur in finite time. To avoid this (since we work with smooth initial and boundary data) we use a geometric argument to construct a small time $T_\ast$ during which solutions will remain smooth. We briefly explain the general idea.

Let us introduce \textit{another} suitable compact set $\tilde{\mathscr{K}}$ such that $\tilde{\mathscr{K}} \subset \mathscr{K}$ and $\B{v}_\mathrm{e} \in \tilde{\mathscr{K}}$. Take $r = r(\tilde{\mathscr{K}}, \mathscr{K}) > 0$ as the largest real number such that 
\begin{equation}\label{rKK}
	\|\B{v}-\B{v}_e\|_{C(\overline\Omega)} < r \Longrightarrow \B{v} \in \mathscr{K}.
\end{equation}
Notice that due to the definition of suitable compact sets, we can always find such a bound $r$. If $\B{v} \in \BX_T(\BH^1(\Omega),\BL^2(\Omega))$ is such that $\B{v}(0,\cdot) = \B{v}_{\mathrm{e}}$, then there exists a (maximal) time $\tilde T>0$ such that $\B{v}(t,\cdot) \in B_r(\B{v}_\mathrm{e})$ for all $0 \leqslant t \leqslant \tilde T.$

The main goal, therefore, is to take a sequence $\{\B{v}_k\}$ of such functions with their corresponding $\{\tilde T_k\}$ converging to a smooth solution $\B{v}$ of the semilinear problem. The most difficult task is to show that the sequence $\{\tilde{T_k}\}$ admits a uniform lower bound that is positive.
\begin{remark}\label{notationestimate}
	Most of the proofs in this section will be based on estimates. To avoid overloaded notation, we use the symbol $\lesssim$ to indicate the presence of a constant that might depend on the fixed constants of the problem, but that do not play an important role in that particular step. 
\end{remark}
\subsection{Linear analysis} \label{seclin}
We start with the main property of the differential operator $\BA$ defined in \eqref{defAP}.
\begin{theorem}\label{maxmon} 
	The operator $\BA: \calD(\BA) \subset {\BL^2(\Omega)} \to {\BL^2(\Omega)}$ is maximal dissipative.
\end{theorem}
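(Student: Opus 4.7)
My plan is to verify the two hypotheses of the Lumer--Phillips theorem in the form: (i) $\BA$ is dissipative on $\calD(\BA)$, and (ii) $R(I-\BA) = \BL^2(\Omega)$.

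For (i), I would prove the stronger statement that $\BA$ is skew-symmetric on $\calD(\BA)$. The weighted $\BBL_k^2(\Omega_k)$ inner product pairs $-c^2\partial_x\tilde q^k$ with $\tilde p^k$ (weight $D_k^2$) and $-\partial_x\tilde p^k$ with $\tilde q^k$ (weight $c^2 D_k^2$). Their sum equals $-c^2 D_k^2\,\partial_x(\tilde p^k\tilde q^k)$, so a single integration by parts reduces $(\BBA_k\vec u_k,\vec u_k)_{\BBL_k^2(\Omega_k)}$ to the endpoint term $-c^2 D_k^2[\tilde p^k\tilde q^k]_0^{L_k}$. Summing over $k$ and re-indexing by vertex gives $(\BA\B{u},\B{u})_{\BL^2(\Omega)}=-c^2\sum_{v\in\calV}\sum_{k:\xi_k(v)\neq 0}\xi_k(v)\,D_k^2\,\tilde p^k(v)\,\tilde q^k(v)$. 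At each $v\in\calV_-^\partial$ (resp.\ $\calV_+^\partial$) the pressure (resp.\ flux) boundary condition encoded in $\calD(\BA)$ kills its summand; at each $v\in\calV^\circ$, pressure continuity lets me factor out a common value $\tilde p(v)$, after which the Kirchhoff condition \eqref{kirchoff} annihilates the residual weighted flux sum --- precisely because the weights $D_k^2$ arising from the $\BBL_k^2$ norm coincide with those in \eqref{kirchoff}. Hence $(\BA\B{u},\B{u})=0$.

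For (ii), given $\B{f}\in\BL^2(\Omega)$, the equation $(I-\BA)\B{u}=\B{f}$ decouples into $m$ constant-coefficient first-order linear $2\times 2$ ODE systems with $L^2$ forcing, one per pipe. For any prescribed initial datum $(\tilde p^k(0_k),\tilde q^k(0_k))\in\BR^2$ the ODE admits a unique $H^1(\Omega_k)^2$ solution, so candidate elements of $\BH^1(\Omega)$ form a $2m$-parameter family. The membership conditions of $\calD(\BA)$ amount to exactly $2m$ linear constraints on these parameters: one scalar equation at each $v\in\calV^\partial$ and $|\kappa(v)|$ equations at each $v\in\calV^\circ$ (i.e., $|\kappa(v)|-1$ pressure-continuity relations plus one Kirchhoff relation), whose total matches $2m$ via $\sum_{v\in\calV}|\kappa(v)|=2m$ together with $|\kappa(v)|=1$ for $v\in\calV^\partial$. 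The uniqueness that follows from (i) --- testing $\B{u}-\BA\B{u}=0$ against $\B{u}$ yields $\|\B{u}\|^2=0$ --- shows the associated $2m\times 2m$ linear map is injective, hence invertible, delivering the required $\B{u}\in\calD(\BA)$.

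The main obstacle lies entirely in (i): the careful bookkeeping that rearranges the pipe-wise endpoint contributions into a vertex-indexed sum in which the $D_k^2$ factor inherited from the weighted norm matches the $D_k^2$ in Kirchhoff's law. Once that matching is made explicit, (ii) reduces to finite-dimensional linear algebra on $\BR^{2m}$, with invertibility furnished for free by the dissipativity established in (i).
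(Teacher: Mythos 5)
Your dissipativity argument is exactly the paper's: integrate by parts pipe-by-pipe in the weighted $\BBL_k^2$ inner product to reduce $(\BA\B{u},\B{u})$ to the endpoint terms $-c^2D_k^2[\tilde p^k\tilde q^k]_0^{L_k}$, regroup the sum by vertices, kill the boundary-vertex contributions via conditions (ii)--(iii) of $\calD(\BA)$, factor out the common pressure at interior nodes via \eqref{pressurecont}, and let the Kirchhoff condition \eqref{kirchoff} annihilate the remaining weighted flux sums --- including the observation that the $D_k^2$ weights in the norm are matched to those in \eqref{kirchoff}. Where you diverge is maximality. The paper disposes of it in one line: it notes the norm identity $\|\BA\B{u}\|_{\BL^2(\Omega)}=c\{\|\B{u}\|_{\BH^1(\Omega)}-\|\B{u}\|_{\BL^2(\Omega)}\}$ in the sum norm and then invokes an abstract criterion from \cite{E-G}. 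You instead verify the range condition $R(I-\BA)=\BL^2(\Omega)$ directly: on each pipe $(I-\BBA_k)\vec u_k=\vec f_k$ is a first-order linear ODE system whose $H^1$ solutions form a two-parameter family, the $\calD(\BA)$ membership conditions impose exactly $2m$ affine constraints (one per boundary vertex, $|\kappa(v)|$ per interior vertex, totalling $\sum_v|\kappa(v)|=2m$), and injectivity of the resulting square linear map follows from the dissipativity just proved, since a homogeneous solution satisfying all constraints lies in $\calD(\BA)$ and is killed by testing $\B{u}-\BA\B{u}=0$ against $\B{u}$. Both arguments are sound; yours is self-contained and makes the role of the network topology in the solvability count explicit, at the price of being longer, while the paper's is shorter but leans on an external functional-analytic proposition whose applicability it does not spell out.
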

\begin{proof}
	Let $\B{u} \in \calD(\BA)$ and let $P_v$ be the representative of the pressure at node $v \in \calV^\circ.$ By the continuity conditions and considering the scaling of the $\BL^2$-norm we compute
	\vspace{-.35cm}
	\begin{align}
		(\BA \B{u}, \B{u})_{{\BL^2(\Omega)}} & = \sum\limits_{k = 1}^{m} (\calA_k \vec{u}_k,\vec{u}_k)_{{\BBL^2(\Omega_k)}} \nonumber \\ 
		&= \sum\limits_{k = 1}^{m} D_k^2\left[(-c^2\partial_x \tilde q^k,\tilde p^k)_{L^2(\Omega_k)} + c^2(-\partial_x \tilde p^k,\tilde q^k)_{L^2(\Omega_k)}\right] \nonumber\\ 
		&= -c^2\sum\limits_{k = 1}^{m} D_k^2\left[\int_0^{L_k}\left(\partial_x \tilde q^k(x)\tilde p^k(x) + \partial_x \tilde p^k(x)\tilde q^k(x)\right)dx\right] \nonumber \\ 
		&= -c^2\sum\limits_{k = 1}^{m} D_k^2\tilde p^k(x)\tilde q^k(x) \biggr\rvert_{0}^{L_k} \label{c1} \nonumber \\ 
		&= -c^2\left\{\sum\limits_{k = 1}^{m} D_k^2\tilde p^k(L_k)\tilde q^k(L_k)-\sum\limits_{k = 1}^{m} D_k^2\tilde p^k(0)\tilde q^k(0)\right\} \nonumber \\ 
		& = -c^2\left\{\sum\limits_{i=1}^n\left[\sum\limits_{\substack{k=1 \\ \xi_k(v_i) = 1 \\ v_i \notin \calV_+^\partial}}^{m} D_k^2\tilde p^k(L_k)\tilde q^k(L_k)-\sum\limits_{\substack{k=1 \\ \xi_k(v_i) = -1 \\ v_i \notin \calV_-^\partial}}^{m} D_k^2\tilde p^k(0)\tilde q^k(0)\right]\right\} \nonumber \\ 
		& = -c^2\left\{\sum\limits_{i=1}^nP_{v_i}\left[\sum\limits_{\substack{k=1 \\ \xi_k(v_i) = 1 \\ v_i \notin \calV_+^\partial}}^{m} D_k^2\tilde q^k(L_k)-\sum\limits_{\substack{k=1 \\ \xi_k(v_i) = -1\\\\ v_i \notin \calV_-^\partial}}^{m} D_k^2\tilde q^k(0)\right]\right\} \nonumber \\ 
		& = -c^2\left\{\sum\limits_{\substack{i=1 \\ v_i \notin \calV^\partial}}^nP_{v_i}\left[\sum\limits_{k=1}^{m} \xi_k(v_i) D_k^2\tilde q^k(v_i)\right]\right\} \nonumber \\ 
		& = -c^2 \sum\limits_{v \in \calV^\circ} P_v \sum\limits_{k = 1}^{m} \xi_k(v) D_k^2 \tilde q^k(v) = 0, 
	\end{align} 
	whereby $\BA$ is dissipative. By using the (equivalent) sum norm, it follows that $\|\BA\B{u}\|_{{\BL^2(\Omega)}} =c\left\{\|\B{u}\|_{{\BH^1(\Omega)}} - \|\B{u}\|_{{\BL^2(\Omega)}}\right\}$, and then maximimality follows from, e.g, \cite[Proposition 6.55, p. 314]{E-G}.
\end{proof}
The exact same proof as above implies that $-\BA: \calD(\BA) \subset \BL^2(\Omega) \to \BL^2(\Omega)$ is maximal dissipative. This in turn yields the following corollary which is of particular interest for control problems.
\begin{corollary}\label{skew}
	The operator $\BA$ is skew--adjoint, i.e., $\BA^\ast = -\BA.$
\end{corollary}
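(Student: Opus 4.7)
The plan is to combine the symmetry identity already implicit in the proof of Theorem \ref{maxmon} with the dual maximality of $\pm\BA$ noted in the paragraph preceding the corollary in order to run the standard skew-adjointness argument. First, I would observe that the chain of equalities in \eqref{c1} in fact establishes $(\BA\B{u},\B{u})_{\BL^2(\Omega)}=0$ for every $\B{u}\in\calD(\BA)$, not merely a one-sided inequality. Polarising this identity in the real inner product of $\BL^2(\Omega)$ (and using its symmetry) yields
\begin{equation*}
(\BA\B{u},\B{v})_{\BL^2(\Omega)} \;=\; -(\B{u},\BA\B{v})_{\BL^2(\Omega)}\qquad\text{for all }\B{u},\B{v}\in\calD(\BA).
\end{equation*}
Since $\calD(\BA)$ is dense in $\BL^2(\Omega)$ (it contains, on each pipe, all smooth functions compactly supported away from the vertices, for which every boundary, Kirchhoff and pressure-continuity condition is trivial), $\BA^\ast$ is well-defined and the identity above records the inclusion $-\BA\subseteq\BA^\ast$, i.e.\ $\calD(\BA)\subseteq\calD(\BA^\ast)$ with $\BA^\ast\B{u}=-\BA\B{u}$ on $\calD(\BA)$.

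Next I would establish the reverse inclusion $\calD(\BA^\ast)\subseteq\calD(\BA)$ by exploiting that \emph{both} $\BA$ and $-\BA$ are maximal dissipative. By Lumer--Phillips, $I-\BA$ and $I+\BA$ are therefore simultaneously surjective from $\calD(\BA)$ onto $\BL^2(\Omega)$. Fix $\B{u}\in\calD(\BA^\ast)$ and set $\B{g}:=\B{u}+\BA^\ast\B{u}$. Surjectivity of $I-\BA$ produces $\B{v}\in\calD(\BA)$ with $(I-\BA)\B{v}=\B{g}$, which via the first step rewrites as $(I+\BA^\ast)\B{v}=\B{g}=(I+\BA^\ast)\B{u}$. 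Hence $\B{w}:=\B{u}-\B{v}\in\calD(\BA^\ast)$ lies in the kernel of $I+\BA^\ast$, and the identification $\BA^\ast=-\BA$ will follow as soon as that kernel is shown to be trivial.

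The only step that really needs a separate argument, and the main subtlety of the proof, is precisely this injectivity of $I+\BA^\ast$. To handle it I would test $\B{w}$ against an arbitrary $\B{z}\in\calD(\BA)$ and use the definition of the adjoint to obtain
\begin{equation*}
0 \;=\; ((I+\BA^\ast)\B{w},\B{z})_{\BL^2(\Omega)} \;=\; (\B{w},(I+\BA)\B{z})_{\BL^2(\Omega)}.
\end{equation*}
Maximal dissipativity of $-\BA$ then gives $(I+\BA)\calD(\BA)=\BL^2(\Omega)$, so $\B{w}$ is orthogonal to all of $\BL^2(\Omega)$ and therefore vanishes. This forces $\B{u}=\B{v}\in\calD(\BA)$ with $\BA^\ast\B{u}=-\BA\B{u}$, completing the proof. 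Observe that no further PDE estimate is required beyond what was already carried out in Theorem \ref{maxmon}: all of the geometric information (boundary data, Kirchhoff law, pressure continuity) has been absorbed into $\calD(\BA)$, so the only care points are clean domain bookkeeping and the twofold use of maximality for $\BA$ and $-\BA$.
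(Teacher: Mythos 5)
Your proof is correct and follows the same overall strategy as the paper: polarise the identity $(\BA\B{u},\B{u})_{\BL^2(\Omega)}=0$ established in the proof of Theorem \ref{maxmon} to obtain $-\BA\subseteq\BA^\ast$, then use the maximal dissipativity of both $\BA$ and $-\BA$ to reduce the reverse inclusion to showing that a certain element of $\calD(\BA^\ast)$ vanishes. The two arguments diverge only in how that element is killed: the paper arrives at $(I-\BA^\ast)\B{w}=0$ and tests against $\B{w}$ itself, invoking the fact that the adjoint of a maximal dissipative operator is dissipative, whereas you arrive at $(I+\BA^\ast)\B{w}=0$, test against arbitrary $\B{z}\in\calD(\BA)$, and use the surjectivity of $I+\BA$ to conclude $\B{w}\perp\BL^2(\Omega)$. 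Your closing is marginally more self-contained, since it needs only the definition of the adjoint and the range condition already supplied by maximality rather than the duality characterisation of maximal dissipativity that the paper quotes; you also make explicit the density of $\calD(\BA)$ required for $\BA^\ast$ to be well defined, which the paper leaves implicit. Both routes are sound.
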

\begin{proof}
	First recall that a closed operator (in Hilbert space) is maximal dissipative if and only if its dual is dissipative. We are going to use this fact.
	
	We know that $\BA$ and $-\BA$ are maximal dissipative. Given $\B{u}, \B{v} \in \calD(\BA)$ we have, in light of Theorem \ref{maxmon}, that 
$
	(\BA\B{u},\B{v}) + (\BA\B{v},\B{u}) = (\BA(\B{u}+\B{v}),\B{u}+\B{v}) - (\BA\B{u},\B{u}) - (\BA\B{v},\B{v}) = 0.
$
Therefore, $\BA^\ast$ is an extension of $-\BA.$ We now show that $-\BA$ is also an extension of $\BA^\ast.$ 

Let $\B{u} \in \calD(\BA^\ast)$ with $\B{f} = \BA^\ast \B{u}$ and let $\B{g} = \B{u}-\B{f} \in \BL^2(\Omega).$ Since $-\BA$ is maximal dissipative, there exists $\B{v} \in \calD(\BA)$ such that $\B{v} - (-\BA)\B{v} = \B{g}$. But since $\BA^\ast$ is an extension of $-\BA$ it follows that $\B{v} \in \calD(\BA^\ast)$ and $\B{v} - \BA^\ast \B{v} = \B{g} = \B{u} - \BA^\ast\B{u}$, whereby $\B{v}-\B{u} - \BA^\ast(\B{v}-\B{u}) = 0$. Now the maximal dissipativity of $-\BA$ gives the dissipativity of $-\BA^\ast$. Then taking the $\BL^2$--inner product of the above identity with $\B{v}-\B{u}$ gives $\B{u}=\B{v} \in \calD(\BA).$ Then $-\BA$ extends $\BA^\ast$ and the proof is complete.
\end{proof}

The Lumer-Phillips Theorem \cite[Theorem 4.3, p. 14]{P} now implies that $\BA$ generates in ${\BL^2(\Omega)}$ a $C_0$--group of isometries $\{\tilde \BT(t)\}_{t \in \BR}.$ Moreover, since $\BP \in \calL({\BL^2(\Omega)})$ it follows from \cite[Theorem 1.1, p. 76]{P} that $\BA+\BP$ generates a $C_0$--semigroup $\{\BT(t)\}_{t \geqslant 0}$ and from \cite[Corollary 1.3, p. 78]{P} that 
\begin{equation}\label{semig}
	\|\tilde \BT(t) - \BT(t)\| \leqslant e^{\|P\| t} -1.
\end{equation}

\begin{lemma}\label{regB}
	For $s = 0$ or $s=1$ we have $\BB \in \mathcal{L}\left(\BU;H^1(0,T;\BH^s(\Omega))\right)$.
\end{lemma}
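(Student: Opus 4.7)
The plan is to exploit the explicit algebraic structure of the operator $\BB = \BB_0 + \BP\BB_1 - D_t\BB_1$ together with the Hilbert space structure of $\BU\subset [H^2(0,T)]^{2m}$. The first observation is that, because $\BB_1$ and $\BB_0$ are independent of $t$, the commutation $D_t(\BB_1\Phi)=\BB_1\Phi'$ and $D_t(\BB_0\Phi)=\BB_0\Phi'$ hold in the distributional sense, so the action of $\BB$ can be rewritten as
\[
\BB\Phi = \BB_0\Phi + \BP\BB_1\Phi - \BB_1\Phi',\qquad D_t(\BB\Phi)=\BB_0\Phi' + \BP\BB_1\Phi' - \BB_1\Phi''.
\]
Since $\Phi\in [H^2(0,T)]^{2m}$, we have $\Phi,\Phi'\in [H^1(0,T)]^{2m}$ and $\Phi''\in [L^2(0,T)]^{2m}$, each controlled by $\|\Phi\|_\BU$, so from the $t$-variable side everything needed for membership in $H^1(0,T;\cdot)$ is already under control.

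Next I would handle the spatial regularity componentwise on each pipe. On the $k$-th pipe the matrix $\BBB_0^k$ has constant entries, and the matrix $\BBB_1^k(x)$ has entries of the form $(L_k-x)/L_k$ and $x/L_k$, which are polynomials of degree at most one on the compact interval $[0,L_k]$ and hence lie in $C^\infty(\overline{\Omega_k})\subset H^1(\Omega_k)$ with uniformly bounded $W^{1,\infty}$-norms depending only on $L_k$. Likewise $\BBP_k$ has constant entries depending on $\gamma_k$. Consequently, for any scalar $\varphi\in H^1(0,T)$ the function $(t,x)\mapsto \BBB_0^k\varphi(t)$ is constant in $x$ (hence trivially in $\BH^1$ on $\Omega_k$) and $(t,x)\mapsto \BBP_k\BBB_1^k(x)\varphi(t)$, $(t,x)\mapsto \BBB_1^k(x)\varphi(t)$ are products of a polynomial in $x$ with a function in $H^1(0,T)$. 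The usual estimate
\[
\|g(\cdot)\varphi\|_{H^1(0,T;H^s(\Omega_k))} \;\lesssim\; \|g\|_{C^1(\overline{\Omega_k})}\,\|\varphi\|_{H^1(0,T)}, \qquad s\in\{0,1\},
\]
(which is immediate from the tensor product structure) then yields the required bound on each of the three summands.

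Combining these two steps, I would prove the estimate
\[
\|\BB\Phi\|_{H^1(0,T;\BH^s(\Omega))} \;\lesssim\; \|\Phi\|_{[H^1(0,T)]^{2m}} + \|\Phi'\|_{[H^1(0,T)]^{2m}} + \|\Phi''\|_{[L^2(0,T)]^{2m}} \;\lesssim\; \|\Phi\|_\BU,
\]
for $s=0$ and $s=1$, where the implicit constants depend only on $c$, the $L_k$'s and the $\gamma_k$'s. Linearity of $\BB$ is evident from its definition, so this estimate yields the desired continuity $\BB\in\mathcal{L}(\BU;H^1(0,T;\BH^s(\Omega)))$.

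The argument is essentially bookkeeping; the only mildly delicate point is recognizing that the term $D_t\BB_1\Phi$ in the definition of $\BB$ is what forces the $H^2(0,T)$ regularity of the controls (otherwise $\Phi''$ would not be available in $L^2$ to control $D_t(\BB\Phi)$). Once that is observed, the estimate is routine because all the spatial multipliers appearing in $\BB$ are smooth polynomials on compact intervals.
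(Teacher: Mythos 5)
Your proposal is correct and follows essentially the same route as the paper: decompose $\BB$ into its summands $\BB_0$, $\BP\BB_1$ and $D_t\BB_1$, observe that every spatial multiplier is a constant or a degree-one polynomial on a compact interval, and trade the two time derivatives needed for $D_t(\BB_1\Phi)$ and $D_t(\BB\Phi)$ against the $H^2(0,T)$ regularity built into $\BU$. The paper merely carries out the same componentwise bounds by explicit integral computations on each pipe, whereas you package them through the tensor-product estimate; the content is identical.
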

\begin{proof}
	The proof is rather elementary. However, we present it here so we can fix quantities that will be later used when estimating the nonlinearity. For a given $\Phi \in \BU$ we denote $\Phi = (\Phi^k)$ with each $\Phi^k = (\Phi_1^k,\Phi_2^k).$ Let $s \in \mathbb{N} \cup \{0\}$.
	\begin{itemize}
		\item For $\BB_0$ we have, for each $k$
		\begin{align*}
			\|\BBB_0^k\Phi^k\|_{L^2(0,T;\BBH^s(\Omega_k))}^2 
			& = \sum\limits_{m \leqslant s} \|D_x^m\BBB_0^k\Phi^k\|_{L^2(0,T;\BBL^2(\Omega_k))}^2 \\
			&= \sum\limits_{m \leqslant s} \int_0^T \|D_x^m\BBB_0^k\Phi^k(t,\cdot)\|_{\BBL^2(\Omega_k)}^2dt \\ 
			&= \sum\limits_{m \leqslant s} \dfrac{c^2}{L_k^2}\int_0^T D_k^2\left[c^2\int_0^{L_k} |\partial_x^s\Phi_1^k(t)|^2dx + \int_0^{L_k}|\partial_x^s\Phi_2^k(t)|^2dx\right]dt \\
			&= \dfrac{c^2}{L_k^2}\int_0^T D_k^2\left[c^2\int_0^{L_k} |\Phi_1^k(t)|^2dx + \int_0^{L_k}|\Phi_2^k(t)|^2dx\right]dt \\
			&= \dfrac{c^2D_k^2}{L_k}\int_0^T \left[c^2|\Phi_1^k(t)|^2+ |\Phi_2^k(t)|^2\right]dt
			 \lesssim \|\Phi^k\|_{L^2(0,T) \times L^2(0,T)}^2
		\end{align*} 
		and
		\begin{align*}
			\|D_t\BBB_0^k\Phi^k\|_{L^2(0,T;\BBH^s(\Omega_k))}^2 
			& = \sum\limits_{m \leqslant s} \|D_x^mD_t\BBB_0^k\Phi^k\|_{L^2(0,T;\BBL^2(\Omega_k))}^2 \\
			&= \sum\limits_{m \leqslant s} \int_0^T \|D_x^mD_t\BBB_0^k\Phi^k(t,\cdot)\|_{\BBL^2(\Omega_k)}^2dt \\ 
			&= \sum\limits_{m \leqslant s} \dfrac{c^2}{L_k^2}\int_0^T D_k^2\left[c^2\int_0^{L_k} \left|\partial_x^s\dfrac{d}{dt}\Phi_1^k(t)\right|^2dx + \int_0^{L_k}\left|\partial_x^s\dfrac{d}{dt}\Phi_2^k(t)\right|^2dx\right]dt \\
			&= \dfrac{c^2}{L_k^2}\int_0^T D_k^2\left[c^2\int_0^{L_k} \left|\dfrac{d}{dt}\Phi_1^k(t)\right|^2dx + \int_0^{L_k}\left|\dfrac{d}{dt}\Phi_2^k(t)\right|^2dx\right]dt \\
			&= \dfrac{c^2D_k^2}{L_k}\int_0^T \left[c^2\left|\dfrac{d}{dt}\Phi_1^k(t)\right|^2+ \left|\dfrac{d}{dt}\Phi_2^k(t)\right|^2\right]dt
			 \lesssim \|D_t\Phi^k\|_{L^2(0,T) \times L^2(0,T)}^2
		\end{align*}
		whereby 
		\begin{align*}
			\|\BBB_0^k\Phi^k\|_{H^1(0,T;\BBH^s(\Omega_k))}^2 
			& \lesssim \|\Phi^k\|_{H^2(0,T) \times H^2(0,T)}^2.
		\end{align*} 
		By taking the maximum of the $k$ constants above, we obtain $\|\BB_0\|_{\mathcal{L}(\BU;H^1(0,T;\BH^s(\Omega)))}.$
		
	\item  For $\BB_1$ we have, for each $k$,
	\begin{align*}
		&\|\BBB_1^k\Phi^k\|_{L^2(0,T;\BBH^s(\Omega_k))}^2 
		= \sum\limits_{m \leqslant s} \|D_x^m \BBB_1^k \Phi^k\|_{L^2(0,T;\BBL^2(\Omega_k))}^2
		= \sum\limits_{m \leqslant s} \int_0^T \|D_x^m \BBB_1^k\Phi^k(t,\cdot)\|_{\BBL^2(\Omega_k)}^2dt \\ 
		&=\sum\limits_{m \leqslant s}  \dfrac{1}{L_k^2}\int_0^T D_k^2\left[\int_0^{L_k} |\partial_x^m[(L_k-x)\Phi_1^k(t)]|^2dx + c^2\int_0^{L_k}|\partial_x^m[x\Phi_2^k(t)]|^2dx\right]dt \\
		&=\dfrac{1}{L_k^2}\int_0^T D_k^2\left[\int_0^{L_k} |(L_k-x)\Phi_1^k(t)|^2dx + c^2\int_0^{L_k}|x\Phi_2^k(t)|^2dx\right]dt \\
		&+  \dfrac{1}{L_k^2}\int_0^T D_k^2\left[\int_0^{L_k} |\Phi_1^k(t)|^2dx + c^2\int_0^{L_k}|\Phi_2^k(t)|^2dx\right]dt \\
		& \leqslant \dfrac{1}{L_k^2}\int_0^T D_k^2\left[\left(\max\limits_{x \in \overline{\Omega_k}}|L_k-x|\right)^2\int_0^{L_k} |
		\Phi_1^k(t)|^2dx +c^2\left(\max\limits_{x \in \overline{\Omega_k}}|x|\right)^2 \int_0^{L_k}|\Phi_2^k(t)|^2dx\right]dt \\
		&+  \dfrac{1}{L_k^2}\int_0^T D_k^2\left[\int_0^{L_k} |\Phi_1^k(t)|^2dx + c^2\int_0^{L_k}|\Phi_2^k(t)|^2dx\right]dt
		\lesssim \|\Phi^k\|_{L^2(0,T) \times L^2(0,T)}^2
	\end{align*} 
	and
	\begin{align*}
		&\|D_t\BBB_1^k\Phi^k\|_{L^2(0,T;\BBH^s(\Omega_k))}^2 
		= \sum\limits_{m \leqslant s} \|D_x^m D_t \BBB_1^k \Phi^k\|_{L^2(0,T;\BBL^2(\Omega_k))}^2
		= \sum\limits_{m \leqslant s} \int_0^T \|D_x^m D_t\BBB_1^k\Phi^k(t,\cdot)\|_{\BBL^2(\Omega_k)}^2dt \\ 
		&=\sum\limits_{m \leqslant s}  \dfrac{1}{L_k^2}\int_0^T D_k^2\left[\int_0^{L_k} \left|\partial_x^m(L_k-x)\dfrac{d}{dt}\Phi_1^k(t)\right|^2dx + c^2\int_0^{L_k}\left|\partial_x^m(x)\dfrac{d}{dt}\Phi_2^k(t)\right|^2dx\right]dt \\
		&= \dfrac{1}{L_k^2}\int_0^T D_k^2\left[\int_0^{L_k} \left|(L_k-x)\dfrac{d}{dt}\Phi_1^k(t)\right|^2dx + c^2\int_0^{L_k}\left|x\dfrac{d}{dt}\Phi_2^k(t)\right|^2dx\right]dt \\
		&+ \dfrac{1}{L_k^2}\int_0^T D_k^2\left[\int_0^{L_k} \left|\dfrac{d}{dt}\Phi_1^k(t)\right|^2dx + c^2\int_0^{L_k}\left|\dfrac{d}{dt}\Phi_2^k(t)\right|^2dx\right]dt
		 \lesssim \|\Phi^k\|_{H^1(0,T) \times H^1(0,T)}^2
	\end{align*} 
	whereby 
\begin{align*}
	\|\BBB_0^k\Phi^k\|_{H^1(0,T;\BBH^s(\Omega_k))}^2 
	& \lesssim \|\Phi^k\|_{H^2(0,T) \times H^2(0,T)}^2
\end{align*} 
and again, by taking the maximum of $k$ constants, we obtain $	\|\BB_1\|_{\mathcal{L}(\BU;H^1(0,T;\BH^s(\Omega)))}$.
\item For $D_t\BB_1$ we have, for each $k$, as before, 
\begin{align*}
	\|D_t\BBB_1^k\Phi^k\|_{L^2(0,T;\BBH^s(\Omega_k))}^2 
	\lesssim \|\Phi^k\|_{H^1(0,T) \times H^1(0,T)}^2
\end{align*} 
and
\begin{align*}
	&\|D_t^2\BBB_1^k\Phi^k\|_{L^2(0,T;\BBH^s(\Omega_k))}^2 
	= \sum\limits_{m \leqslant s} \|D_x^m D_t^2 \BBB_1^k \Phi^k\|_{L^2(0,T;\BBL^2(\Omega_k))}^2
	= \sum\limits_{m \leqslant s} \int_0^T \|D_x^m D_t^2\BBB_1^k\Phi^k(t,\cdot)\|_{\BBL^2(\Omega_k)}^2dt \\ 
	&=\sum\limits_{m \leqslant s}  \dfrac{1}{L_k^2}\int_0^T D_k^2\left[\int_0^{L_k} \left|\partial_x^m(L_k-x)\dfrac{d^2}{dt^2}\Phi_1^k(t)\right|^2dx + c^2\int_0^{L_k}\left|\partial_x^m(x)\dfrac{d^2}{dt^2}\Phi_2^k(t)\right|^2dx\right]dt \\
	&= \dfrac{1}{L_k^2}\int_0^T D_k^2\left[\int_0^{L_k} \left|(L_k-x)\dfrac{d^2}{dt^2}\Phi_1^k(t)\right|^2dx + c^2\int_0^{L_k}\left|x\dfrac{d^2}{dt^2}\Phi_2^k(t)\right|^2dx\right]dt \\
	&+ \dfrac{1}{L_k^2}\int_0^T D_k^2\left[\int_0^{L_k} \left|\dfrac{d^2}{dt^2}\Phi_1^k(t)\right|^2dx + c^2\int_0^{L_k}\left|\dfrac{d^2}{dt^2}\Phi_2^k(t)\right|^2dx\right]dt
	\lesssim \|\Phi^k\|_{H^2(0,T) \times H^2(0,T)}^2
\end{align*} 
whereby 
\begin{align*}
	\|D_t\BBB_1^k\Phi^k\|_{H^1(0,T;\BBH^s(\Omega_k))}^2 
	& \lesssim \|\Phi^k\|_{H^2(0,T) \times H^2(0,T)}^2
\end{align*} 
from where we obtain $\|D_t\BB_1\|_{\mathcal{L}(\BU;H^1(0,T;\BH^s(\Omega)))}$.
		\item For $\BP$ we have, for each $k$
	\begin{align*}
		\|\BBP_k\Phi^k\|_{L^2(0,T;\BBH^s(\Omega_k))}^2 
		& = \sum\limits_{m \leqslant s} \|D_x^m\BBP_k\Phi^k\|_{L^2(0,T;\BBL^2(\Omega_k))}^2 \\
		&= \sum\limits_{m \leqslant s} \int_0^T \|D_x^m\BBP_k\Phi^k(t,\cdot)\|_{\BBL^2(\Omega_k)}^2dt \\ 
		&= \sum\limits_{m \leqslant s} \int_0^T D_k^2c^2\int_0^{L_k} |\partial_x^s\gamma_k\Phi_1^k(t)|^2dxdt \\
		&= D_k^2\gamma_k^2L_kc^2\int_0^T |\Phi_1^k(t)|^2dt 
		 \lesssim \|\Phi^k\|_{L^2(0,T) \times L^2(0,T)}^2
	\end{align*} 
	and similarly,
		\begin{align*}
		\|D_t\BBP_k\Phi^k\|_{L^2(0,T;\BBH^s(\Omega_k))}^2 
		& \lesssim \|\Phi^k\|_{H^1(0,T) \times H^1(0,T)}^2
	\end{align*} 
	whereby 
	\begin{align*}
		\|\BBP_k\Phi^k\|_{H^1(0,T;\BBH^s(\Omega_k))}^2 
		& \lesssim \|\Phi^k\|_{H^2(0,T) \times H^2(0,T)}^2
	\end{align*} 
	from where we get $\|\BP\|_{\mathcal{L}(\BU;H^1(0,T;\BH^s(\Omega)))}.$
\item Finally, for $\BP\BB_1$ we have, for each $k$
\begin{align*}
	&\|\BBP_k\BBB_1\Phi^k\|_{L^2(0,T;\BBH^s(\Omega_k))}^2 
	 = \sum\limits_{m \leqslant s} \|D_x^m\BBP_k\BBB_1\Phi^k\|_{L^2(0,T;\BBL^2(\Omega_k))}^2 \\
	&= \sum\limits_{m \leqslant s} \int_0^T \|D_x^m\BBP_k\BBB_1\Phi^k(t,\cdot)\|_{\BBL^2(\Omega_k)}^2dt
	= \sum\limits_{m \leqslant s} \int_0^T \dfrac{D_k^2\gamma_k^2c^2}{L_k^2}\int_0^{L_k} |\partial_x^s(L_k-x)\Phi_1^k(t)|^2dxdt \\
	&= \int_0^T \dfrac{D_k^2\gamma_k^2c^2}{L_k^2}\int_0^{L_k} |(L_k-x)\Phi_1^k(t)|^2dxdt 
	+ \int_0^T \dfrac{D_k^2\gamma_k^2c^2}{L_k^2}\int_0^{L_k} |\Phi_1^k(t)|^2dxdt \\
	&\leqslant D_k^2\gamma_k^2c^2\left(1+\dfrac{1}{L_k}\right)\int_0^T |\Phi_1^k(t)|^2dxdt 
	 \lesssim \|\Phi^k\|_{L^2(0,T) \times L^2(0,T)}^2
\end{align*} 
and similarly,
\begin{align*}
	\|D_t\BBP_k\BBB_1\Phi^k\|_{L^2(0,T;\BBH^s(\Omega_k))}^2 
	& \lesssim\|\Phi^k\|_{H^1(0,T) \times H^1(0,T)}^2
\end{align*} 
whereby 
\begin{align*}
	\|\BBP_k\BBB_1\Phi^k\|_{H^1(0,T;\BBH^s(\Omega_k))}^2 
	& \lesssim \|\Phi^k\|_{H^2(0,T) \times H^2(0,T)}^2
\end{align*} 
from where we get $\|\BP\BB_1\|_{\mathcal{L}(\BU;H^1(0,T;\BH^s(\Omega)))}.$
\end{itemize} 
Therefore, 	since $\BB = \BB_0 + \BP\BB_1 + D_t \BB_1$, we have $\BB \in \mathcal{L}(\BU;H^1(0,T;\BH^s(\Omega))).$
\end{proof} 
We are now ready for the linear well-posedness result.
\begin{theorem}\label{linearu}
	Assume $\Phi \in \BU^\mathrm{ad}$ and let $G=G(t,\B{x})$ be such that $G \in H^1(0,T;{\BL^2(\Omega)}).$ Then for each $\B{u}_0 \in \calD(\BA)$ there exists a unique solution $\B{u} \in \BBX_T(\calD(\BA),\BL^2(\Omega))$ to the IVP
	\begin{subnumcases}{\label{abs2}}
		\B{u}_t = (\BA+\BP) \B{u} + \BB\Phi + G, \label{e232nanew2} \\[2mm]
		\B{u}(0) =\B{u}_0 ,\label{e232nbnew2} 
	\end{subnumcases}
which satisfies the following continuity estimates
	\begin{equation}\label{contest1} 
		\|\B{u}\|_{C([0,T];{\BL^2(\Omega)})} \leqslant e^{\gamma T}\left(\|\B{u}_0\|_{{\BL^2(\Omega)}}+\int_0^T\|(\BB\Phi +G)(\tau)\|_{\BL^2(\Omega)}d\tau\right),
	\end{equation} 
	\begin{equation}\label{contest3n} 
		\|\B{u}_t\|_{C([0,T];{\BL^2(\Omega)})} \lesssim e^{\gamma T}\left(\|\B{u}_1\|_{{\BL^2(\Omega)}} + \int_0^T\|D_t(\BB\Phi +G)(
	\tau)\|_{\BL^2(\Omega)}d\tau\right),
	\end{equation}  
	and
	\begin{equation}\label{contest2} 
		\|\B{u}\|_{C([0,T];\BH^1(\Omega))}\lesssim 	\|\B{u}_t\|_{C([0,T];{\BL^2(\Omega)})}
		 + \|\BB\Phi + G\|_{C([0,T];\BL^2(\Omega))},
	\end{equation}
	where $\B{u}_1 \coloneqq (\BA +\BP)\B{u}_0 + \BB\Phi(0) + G(0).$
\end{theorem}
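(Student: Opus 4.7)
The plan is to invoke the standard semigroup theory for the abstract Cauchy problem (\ref{abs2}). By Theorem \ref{maxmon} together with Corollary \ref{skew}, $\BA$ generates a $C_0$-group of isometries on $\BL^2(\Omega)$, and since $\BP \in \calL(\BL^2(\Omega))$, bounded perturbation theory (cf. \cite[Theorem 1.1, p.~76]{P} and the bound (\ref{semig})) yields a $C_0$-semigroup $\{\BT(t)\}_{t \geqslant 0}$ generated by $\BA + \BP$ satisfying $\|\BT(t)\|_{\calL(\BL^2(\Omega))} \leqslant e^{\gamma t}$ with $\gamma := \|\BP\|_{\calL(\BL^2(\Omega))}$. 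Also, by Lemma \ref{regB} (with $s=0$), $\BB\Phi \in H^1(0,T;\BL^2(\Omega))$ for $\Phi \in \BU^\mathrm{ad} \subset \BU$, so combining with the hypothesis on $G$ the forcing $\B{F} := \BB\Phi + G$ lies in $H^1(0,T;\BL^2(\Omega))$.

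With $\B{u}_0 \in \calD(\BA)$ and $\B{F} \in H^1(0,T;\BL^2(\Omega)) \hookrightarrow W^{1,1}(0,T;\BL^2(\Omega))$, I would apply the classical existence/uniqueness result for nonhomogeneous evolution equations \cite[Corollary 2.10, Chap.~4]{P} to obtain a unique classical solution $\B{u} \in C([0,T];\calD(\BA)) \cap C^1([0,T];\BL^2(\Omega)) = \BBX_T(\calD(\BA),\BL^2(\Omega))$, represented via Duhamel's formula as
\begin{equation*}
\B{u}(t) = \BT(t)\B{u}_0 + \int_0^t \BT(t-s)\B{F}(s)\, ds.
\end{equation*}
Taking $\BL^2(\Omega)$ norms and using the semigroup bound immediately yields (\ref{contest1}).

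For (\ref{contest3n}), the idea is to observe that $\B{u}_t$ itself solves an abstract Cauchy problem of the same type: formally differentiating (\ref{e232nanew2}) in $t$ (which is rigorous because $\B{u} \in C^1([0,T];\BL^2(\Omega))$ and $\B{F}$ is $H^1$ in time) shows that $\B{v} := \B{u}_t$ satisfies $\B{v}_t = (\BA+\BP)\B{v} + \dot{\B{F}}$ with initial condition $\B{v}(0) = (\BA+\BP)\B{u}_0 + \B{F}(0) = \B{u}_1$. Equivalently, one may differentiate Duhamel's formula to obtain
\begin{equation*}
\B{u}_t(t) = \BT(t)\B{u}_1 + \int_0^t \BT(t-s)\dot{\B{F}}(s)\, ds,
\end{equation*}
and then (\ref{contest3n}) follows by the same semigroup estimate applied to this representation.

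Finally, for (\ref{contest2}), I would solve the equation (\ref{e232nanew2}) pointwise in time for $\BA\B{u}$, writing $\BA\B{u}(t) = \B{u}_t(t) - \BP\B{u}(t) - \B{F}(t)$, and invoke the norm equivalence $\|\B{u}\|_{\BH^1(\Omega)} \sim \|\B{u}\|_{\BL^2(\Omega)} + \|\BA\B{u}\|_{\BL^2(\Omega)}$ on $\calD(\BA)$ noted in the proof of Theorem \ref{maxmon}. Combining the bounds on $\B{u}_t$ from (\ref{contest3n}), the trivial $\BL^2$-bound $\|\BP \B{u}\|_{\BL^2} \leqslant \|\BP\| \|\B{u}\|_{\BL^2}$ (controlled by (\ref{contest1})), and $\|\B{F}\|_{C([0,T];\BL^2(\Omega))}$ then yields (\ref{contest2}). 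The only technical point that requires attention is verifying that $\BT(t)\B{u}_1$ is well defined and continuous in $t$ with values in $\BL^2(\Omega)$ (since $\B{u}_1 \in \BL^2(\Omega)$ by construction) and that the integral representation of $\B{u}_t$ is licit; both are standard under the regularity $\B{F} \in H^1(0,T;\BL^2(\Omega))$, so no genuine obstacle is expected.
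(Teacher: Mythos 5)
Your proposal is correct and follows essentially the same route as the paper: semigroup generation plus Duhamel's formula for existence and uniqueness, the time-differentiated problem for \eqref{contest3n}, and the norm equivalence $\|\B{u}\|_{\BH^1(\Omega)}\simeq\|(\BA+\BP)\B{u}\|_{\BL^2(\Omega)}$ on $\calD(\BA)$ for \eqref{contest2}. The only (harmless) variation is that you derive the representation of $\B{u}_t$ by differentiating Duhamel's formula directly, whereas the paper justifies \eqref{contest3n} by a density argument, approximating $\B{u}_1$ by elements of $\calD(\BA)$ and $D_t(\BB\Phi+G)$ by $C^1$ functions and passing to the limit in the estimate; both justifications are standard and lead to the same bound.
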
 
\begin{proof}
	First notice that, given Lemma \ref{regB}, the assumptions on $\Phi$ and $G$ imply that $\BB\Phi + G \in H^1(0,T;{\BL^2(\Omega)}).$ Then, it follows from \cite[Proposition 3.3, p. 133]{B-D-D-M} that the function $\B{u}$ defined as
	\begin{equation}\label{mildsolnet} 
		\B{u}(t) = \BT(t)\B{u}_0 + \int_0^t\BT(t-\tau)[\BB\Phi(\tau) + G(\tau)]d\tau,
	\end{equation} is a classical solution (in the sense of \cite{P}) of the problem \eqref{abs2}, i.e., $\B{u} \in \BBX_T(\calD(\BA),\BL^2(\Omega))$ and it solves \eqref{abs2} in $\BL^2(\Omega)$ and pointwise in time.
	
	Gr\"{o}nwall's inequality gives \eqref{contest1}. For \eqref{contest3n}, we differentiate \eqref{e232nanew2} with respect to time and let $\B{z} \coloneqq \B{u}_t.$ Then, since $\B{u}_1 = (\BA +\BP)\B{u}_0 + \BB\Phi(0) + G(0) \in  \BL^2(\Omega),$ we have that $\B{z} \in C([0,T];\BL^2(\Omega))$ and satisfies 
	\begin{subnumcases}{\label{td}}
		\B{z}_t = (\BA+\BP) \B{z} + D_t(\BB\Phi + G), \label{td1} \\[2mm]
		\B{z}(0) = \B{u}_1.\label{td2} 
	\end{subnumcases}
	Although the structure of \eqref{td} is very similar to \eqref{abs2}, the proof of the inequality \eqref{contest3n} requires a density argument. Let $(f_n)$ be a sequence in $C^1([0,T];\BL^2(\Omega))$ such that $f_n \to D_t(\BB\Phi + G)$ in $L^1(0,T;\BL^2(\Omega))$ and let $(\B{z}_n^0)$ be a sequence in $\calD(\BA)$ such that $\B{z}_n^0 \to \B{u}_1$ in $\BL^2(\Omega).$ Associated to this data, let $(\B{z}_n)$ be the sequence in $\BBX_T(\calD(\BA),\BL^2(\Omega))$ such that, for each $n$, $\B{z}_n$ solves
	\begin{subnumcases}{\label{tdn}}
		{\B{z}_n}_t = (\BA+\BP) \B{z}_n + f_n, \label{tdn1} \\[2mm]
		\B{z}_n(0) = \B{z}_n^0.\label{tdn2} 
	\end{subnumcases}
	For \eqref{tdn}, inequality \eqref{contest1} applies and we get \begin{equation}\label{contest4} 
		\|\B{z}_n\|_{C([0,T];{\BL^2(\Omega)})} \leqslant e^{\gamma T}\left(\|\B{z}_n^0\|_{{\BL^2(\Omega)}}+\int_0^T\|f_n(\tau)\|_{\BL^2(\Omega)}d\tau\right).
	\end{equation} 
	Inequality \eqref{contest3n} then follows by taking $n \to \infty.$ We now recall that $\|\B{u}\|_{\BH^1(\Omega)} \simeq \|(\BA+\BP)\B{u}\|_{\BL^2(\Omega)}$ for all $\B{u} \in \calD(\BA).$ This, along with \eqref{e232nanew2}, implies that for each $t \in [0,T]$ we have
	\begin{align*}
		\|\B{u}\|_{C([0,T],\BH^1(\Omega))}  
		& \simeq \|(\BA+\BP)\B{u}\|_{C([0,T];\BL^2(\Omega))} \\ 
		& \leqslant \|\B{u}_t\|_{C([0,T];\BL^2(\Omega))} + \|\BB\Phi + G\|_{C([0,T];\BL^2(\Omega))}
	\end{align*}
	which yields inequality \eqref{contest2}.
\end{proof}

Next, we prove some Lipschitz-type inequalities for the function $\BF.$
\begin{lemma}\label{regF} 	
	The map $\BF$ maps $\BBX_T^\BCK(\BH^1(\Omega),\BL^2(\Omega))$ to $\BBX_T(\BH^1(\Omega),\BL^2(\Omega)).$ Moreover, for each $t \in [0,T]$ and $\B{w}_1, \B{w}_2 \in  \BBX_T^\BCK(\BH^1(\Omega),\BL^2(\Omega))$ we have the following inequalities:
	\begin{equation}\label{LipF0}
		\|\BF (\B{w}_1)(t) - \BF (\B{w}_2)(t)\|_{\BL^2(\Omega )} 
		\lesssim \left\|\B{w}_1(t)- \B{w}_2(t)\right\|_{\BL^2(\Omega )},
	\end{equation} 
	\begin{equation}\label{LipF1}
		\|\BF(\B{w}_1)(t) - \BF(\B{w}_2)(t)\|_{\BH^1(\Omega)} 
		\lesssim \left\|\B{w}_1(t) - \B{w}_2(t)\right\|_{\BH^1(\Omega)}, 
	\end{equation} 
	\begin{equation}\label{LipF2}
		\|\partial_t\BF(\B{w}_1)(t) - \partial_t\BF(\B{w}_2)(t)\|_{\BL^2(\Omega)} 
		 \lesssim \left\|\B{w}_1(t) - \B{w}_2(t)\right\|_{\BH^1(\Omega)} + \left\|\partial_t\B{w}_1(t) - \partial_t\B{w}_2(t)\right\|_{\BL^2(\Omega)}.
	\end{equation} 
\end{lemma}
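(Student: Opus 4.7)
The plan is to reduce all three inequalities to pointwise Lipschitz control of the scalar map $\Phi(p,q) := q|q|/p$ on the suitable compact set $\BCK$ and, whenever a ``bad'' factor of $\partial_x\B{w}_2$ or $\partial_t\B{w}_2$ arises, to absorb it using the $\BL^2$-bounds $\|D_\B{x}\B{v}\|_{C([0,T];\BL^2(\Omega))} \leqslant \kappa_\B{x}$ and $\|D_t\B{v}\|_{C([0,T];\BL^2(\Omega))} \leqslant \kappa_t$ that are built into the definition \eqref{compactsmooth} of $\BBX_T^\BCK$. Since $\BCK$ is suitable, every $\BCK_k^\mathrm{p} = [a_k,b_k]$ satisfies $a_k > 0$, so $\Phi$ is $C^{1,1}$ on $\BCK$, with partial derivatives $\partial_p\Phi = -q|q|/p^2$ and $\partial_q\Phi = 2|q|/p$ both bounded and Lipschitz on $\BCK_k$ by constants depending only on $a_k, b_k, c_k, d_k$.

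The estimate \eqref{LipF0} then falls out immediately: apply the mean value theorem to $\Phi$ pointwise in $(t,\B{x})$, square, integrate over $\Omega_k$, weight by $D_k^2 c^2$, and sum over $k$. For \eqref{LipF1} and \eqref{LipF2} I would write, for either $D = \partial_x$ or $D = \partial_t$, the chain-rule identity $D\Phi(p,q) = (2|q|/p)\,Dq - (q|q|/p^2)\,Dp$, which is valid on $\BCK$ because $q \mapsto q|q|$ is $C^{1,1}$ and we stay away from the vacuum. Subtracting the analogous identity for $(p_2,q_2)$ and adding/subtracting $(2|q_1|/p_1)Dq_2$ and $(q_1|q_1|/p_1^2)Dp_2$ produces two families of terms: (a) bounded coefficients evaluated at $\B{w}_1$ times $D\B{w}_1 - D\B{w}_2$, which contribute directly $\lesssim \|D\B{w}_1-D\B{w}_2\|_{\BL^2(\Omega)}$; and (b) Lipschitz differences of the coefficients times $D\B{w}_2$. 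For (b) I would bound the coefficient differences in $L^\infty$ via the Sobolev embedding $\BH^1(\Omega) \hookrightarrow C(\overline\Omega)$, giving pointwise control by $\|\B{w}_1-\B{w}_2\|_{\BH^1(\Omega)}$, and then absorb the remaining $L^2$-factor of $D\B{w}_2$ by $\kappa_\B{x}$ (for $D = \partial_x$) or $\kappa_t$ (for $D = \partial_t$). Combining the (a) and (b) parts yields \eqref{LipF1} and \eqref{LipF2}, respectively.

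The mapping claim $\BF : \BBX_T^\BCK(\BH^1(\Omega),\BL^2(\Omega)) \to \BBX_T(\BH^1(\Omega),\BL^2(\Omega))$ would then follow by applying the three inequalities with $\B{w}_2 = \B{v}_\mathrm{e}$: the steady state is smooth, lies in $\mathrm{int}(\BCK)$ by Assumption \ref{SSnet} and suitability, so $\BF(\B{v}_\mathrm{e})$ belongs to $\BBX_T(\BH^1(\Omega),\BL^2(\Omega))$ trivially, and the estimates transfer this regularity to $\BF(\B{w})$ for any $\B{w} \in \BBX_T^\BCK$. I expect the main subtlety to be in the (b) terms, where the bookkeeping has to ensure that the Sobolev factor $\|\B{w}_1-\B{w}_2\|_{\BH^1(\Omega)}$ couples with the $\kappa_\B{x}$- or $\kappa_t$-bound rather than with an uncontrolled norm of $\B{w}_1$. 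The failure of $q|q|$ to be $C^2$ at $q=0$ does not cause difficulties here, because only first-order derivatives of $\Phi$ enter the chain rule and these are Lipschitz.
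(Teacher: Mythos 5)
Your proposal is correct and follows essentially the same route as the paper's proof: both arguments rest on the pointwise Lipschitz control of $(p,q)\mapsto q|q|/p$ on the suitable compact set (using $p\geqslant a_k>0$), the embedding $\BH^1(\Omega)\hookrightarrow C(\overline\Omega)$ to handle the coefficient differences, and the bounds $\kappa_t,\kappa_\B{x}$ from the definition of $\BBX_T^\BCK$ to absorb the leftover factors $D\B{w}_2$ in your type-(b) terms. The only organizational difference is that the paper establishes the mapping property first, justifying the chain/quotient rule for $u|u|/v$ in $H^1(\Omega)$ by a separate density lemma (an adaptation of Brezis's Corollary 8.10) before running the estimates, whereas you deduce it from the estimates with $\B{w}_2=\B{v}_\mathrm{e}$ and appeal to the $C^{1,1}$ composition rule — the same mathematical content, packaged differently.
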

\begin{proof} 
	We prove the case $m = 1$ since the proof for the general $m$ is an obvious extension of it. In this case the suitable compact set $\BCK$ is of the form $\BCK = [a,b] \times [c,d]$, with $0< a < b$, $c<d$ and $a + p_\mathrm{in} \leqslant b.$
	
	 Let $\B{v} = (p,q) \in \BBX_T^\BCK(\BBH^1(\Omega),\BBL^2(\Omega))$, and let us show that $\BBF(\B{v}) \in \BBX_T(\BBH^1(\Omega),\BBL^2(\Omega))$.
	We start by showing $\BBF(\B{v}) \in C^1([0,T];{\BBL^2(\Omega)}).$ Continuity in time follows from the fact that every function involved is continuous and $p$ does not vanish in $\overline{Q}_T$. Membership in ${\BBL^2(\Omega)}$ follows by the Sobolev embedding $H^1(\Omega) \hookrightarrow L^\infty(\Omega) \hookrightarrow L^4(\Omega)$. Indeed, for each $t$, 
	\begin{align*}
		\int_\Omega |\BBF(\B{v})(t,\B{x})|^2d\B{x} 
		&= c^2\beta^2\int_\Omega \left|\dfrac{q(t,x)|q(t,x)|}{p(t,x)}\right|^2dx
		\lesssim 
		\int_{\Omega} |q(t,x)|^4dx
		\lesssim 
		\|q(t,\cdot)\|_{L^\infty(\Omega)}^4 
		<
		\infty.
	\end{align*}
	
	Now, since the function $x \mapsto x|x|$ is continuously differentiable and $p$ and $q$ are both $C^1$ in time (with values in ${\BBL^2(\Omega)}$) we have that the second component of $\BBF(\B{v})$ can be differentiated in time. Moreover, $$\dfrac{\partial}{\partial t} \dfrac{q|q|}{p} = \dfrac{2|q|q_tp- q|q|p_t}{{p}^2}$$ is again continuous in time due to continuity of all the functions involved. For membership in ${\BBL^2(\Omega)}$ we compute for each $t$
	\begin{align*}
		\int_\Omega |D_t\BBF(\B{v})(t,\B{x})|^2d\B{x} 
		&= c^2\beta^2\int_\Omega \left|\dfrac{2q_t(t,x)|q(t,x)|p(t,x) - q(t,x)|q(t,x)|p_t(t,x)}{p(t,x)}\right|^2dx \\
		& \lesssim  \int_{\Omega} |q_t(t,x)|^2|q(t,x)|^2dx + \int_\Omega|q(t,x)|^4|p_t(t,x)|^2dx \\
		& \lesssim \|q(t,\cdot)\|_{L^\infty(\Omega)}^2\|q_t(t,\cdot)\|_{L^2(\Omega)}^2 + \|q(t,\cdot)\|_{L^\infty(\Omega)}^4\|p_t(t,\cdot)\|_{L^2(\Omega)}^2 \\
		& \lesssim  \|q_t(t,\cdot)\|_{L^2(\Omega)}^2 + \|p_t(t,\cdot)\|_{L^2(\Omega)}^2< \infty.
	\end{align*}

	Therefore $\BBF(\B{v}) \in C^1([0,T],{\BBL^2(\Omega)}).$ Now, the fact that $\BBF(\B{v}) \in C([0,T],\BBH^1(\Omega))$ is a corollary of the following lemma, which is an adaptation of \cite[Corollary 8.10, p. 215]{B}, but we include a proof here for the reader's convenience.
	\begin{lemma}\label{quotfor}
		Let $\Omega = (a,b)$, $a,b \in \BR$, $a<b$, and let $u,v \in H^1(\Omega)$ be such that there exists $\rho >0$ with $v(x) \geqslant \rho$ for all $x \in \overline\Omega.$ Then $u|u|/v \in H^1(\Omega)$ and 
		\begin{equation}\label{quotientrule} 
			\left(\dfrac{u|u|}{v}\right)' = \dfrac{2|u|u'v - u|u|v'}{v^2}.
		\end{equation}
	\end{lemma}
	\begin{proof}
		Let $u_n, v_n \in C_0^1(\BR)$\footnote{Here $C_0^1(\BR)$ is the set of $C^1$ functions on $\BR$ with compact support.} such that $u_n \to u$ and $v_n \to v$ in $H^1(\Omega)$ as $n \to \infty.$ Since $v(x) \geqslant \rho$ for all $x \in \overline\Omega$ and $H^1(\Omega) \hookrightarrow C(\overline\Omega)$, we can assume without loss of generality that $v_n(x) \geqslant \rho/2$ for all $x \in \overline\Omega$, which will be the case for any sequence converging to $v$ in $H^1(\Omega)$ and $n$ large enough. 
		
		It follows by convergence that $u_n' \to u'$ and $v_n' \to v'$ in $L^2(\Omega)$ as $n \to \infty.$ Then,
		\begin{equation}\label{quot1}
			\left(\dfrac{u_n|u_n|}{v_n}\right)' = \dfrac{2|u_n|u_n'v_n-u_n|u_n|v_n'}{v_n^2} \to \dfrac{2|u|u'v - u|u|v'}{v^2} \mbox{ in } L^2(\Omega),
		\end{equation} 
		To see this, recall that $u_n \to u$ and $v_n \to v$ also in $C(\overline\Omega)$ as $n\to \infty$, then the quantities $\|u_n\|_{L^\infty(\Omega)} , \|v_n\|_{L^\infty(\Omega)} , \|u_n'\|_{L^2(\Omega)}, \|v_n'\|_{L^2(\Omega)}$ are bounded by a common quantitity for all $n$. This readily implies 
		$$\dfrac{2|u_n|u_n'v_n-u_n|u_n|v_n'}{v_n^2} \in L^2(\Omega) \mbox{ for all $n$, and} \ \dfrac{2|u|u'v - u|u|v'}{v^2} \in L^2(\Omega).$$ 
		Moreover, after some estimates one finds that there exists $C>0$ such that 
		\begin{align*}
			&
			\left\|\dfrac{2|u_n|u_n'v_n-u_n|u_n|v_n'}{v_n^2} - \dfrac{2|u|u'v - u|u|v'}{v^2}\right\|_{L^2(\Omega)} \\
			& \leqslant C\left(\|u_n-u\|_{L^\infty(\Omega)} + \|v-v_n\|_{L^\infty(\Omega)} + \|v_n'-v\|_{L^2(\Omega)} + \|u_n'-u'\|_{L^2(\Omega)}\right),
		\end{align*} 
		which implies \eqref{quot1}. This completes the proof, see \cite[Remark 4, p. 204]{B}.
	\end{proof}
	Then, for each $t$ we have
	\begin{align*}
		\int_\Omega |D_x\BBF(\B{v})(t,\B{x})|^2d\B{x} 
		&= c^2\beta^2\int_\Omega \left|\dfrac{2q_x(t,x)|q(t,x)|p(t,x) - q(t,x)|q(t,x)|p_x(t,x)}{p(t,x)}\right|^2dx \\
		& \lesssim  \int_{\Omega} |q_x(t,x)|^2|q(t,x)|^2dx + \int_\Omega|q(t,x)|^4|p_x(t,x)|^2dx \\
		& \lesssim  \|q(t,\cdot)\|_{L^\infty(\Omega)}^2\|q_x(t,\cdot)\|_{L^2(\Omega)}^2 + \|q(t,\cdot)\|_{L^\infty(\Omega)}^4\|p_x(t,\cdot)\|_{L^2(\Omega)}^2 \\
		& \lesssim \|q(t,\cdot)\|_{H^1(\Omega)}^2 + \|p(t,\cdot)\|_{H^1(\Omega)}^2 < \infty.
	\end{align*}
	We now prove the estimates. Let $\B{w}_1 = (u_1, v_1)$ and $\B{w}_2 = (u_2,v_2)$. The estimates below hold for each $t \in [0,T]$, but for simplicity we omit the time argument in the intermediate steps. We have
	\begin{align*} 
		&
		\left\|\BF(\B{w}_1)(t) - \BF(\B{w}_2)(t)\right\|_{{\BL^2(\Omega)}}^2 %\nonumber \\ & 
		= \left\|\BBF(\B{w}_1)(t) - \BBF(\B{w}_2)(t)\right\|_{{\BBL^2(\Omega)}}^2 \nonumber \\ &
		=  D^2c^2\beta^2\left\|\frac{u_1|u_1|}{v_1}-\frac{u_2|u_2|}{v_2}\right\|_{L^2(\Omega)}^2 
		=	D ^2c^2\beta^2\int_0^{L }\left|\frac{u_1 |u_1 |v_2 -u_2 |u_2 |v_1 }{v_1 v_2 }\right|^2dx  \nonumber \\ & 
		\lesssim \int_0^{L }\big|u_1 |u_1 |(v_2 -v_1 ) + (u_1 (|u_1 |-|u_2 |)+(u_1 -u_2 )|u_2 |)v_1 \big|^2dx \nonumber\\ & 
	\lesssim \|u_1 \|_{L^\infty(\Omega )}^4\|v_2 -v_1 \|_{L^2(\Omega )}^2 
		+ (\|u_1 \|_{L^\infty(\Omega )}^2+\|u_2 \|_{L^\infty(\Omega )}^2)\|v_1 \|_{L^\infty(\Omega )}^2\|u_1 -u_2 \|_{L^2(\Omega )}^2 \nonumber\\ 
		& \leqslant b^4\|v_2 -v_1 \|_{L^2(\Omega )}^2 + 2b^2d^2\|u_1 -u_2 \|_{L^2(\Omega )}^2
		\lesssim\left\|\B{w}_1(t)- \B{w}_2(t)\right\|_{\BL^2(\Omega )}.
	\end{align*} 
	Also, we have 
	\begin{align*}
		&	\left\|\partial_t \BF(\B{w}_1)(t) - \partial_t \BF(\B{w}_2)(t)\right\|_{\BL^2(\Omega)}^2 %\nonumber \\ &
		= \left\|\partial_t \BBF(\B{w}_1)(t) - \partial_t \BBF(\B{w}_2)(t)\right\|_{\BBL^2(\Omega)}^2 \nonumber \\ &
		= 	D ^2c^2\beta^2\int_0^{L }\left|\dfrac{2|{u_1 }|{u_1 }_t {v_1 } - {u_1 }|{u_1 }|{v_1 }_t}{{v_1 }^2} - \dfrac{2|{u_2 }|{u_2 }_t{v_2 } - {u_2 }|{u_2 }|{v_2 }_t}{{v_2 }^2}\right|^2dx \\&
		\lesssim\int_0^{L }\big||{u_1 }|{u_1 }_t {v_1 }{v_2 }^2 - |{u_2 }|{u_2 }_t {v_2 }{v_1 }^2\big|^2dx + \int_0^{L }\big| {u_1 }|{u_1 }|{v_1 }_t{v_2 }^2-{u_2 }|{u_2 }|{v_2 }_t{v_1 }^2\big|^2dx \\&
		\lesssim\int_0^{L }\big||{u_1 }|{v_1 }{v_2 }^2({u_1 }_t-{u_2 }_t)+{v_2 }{v_1 }{u_2 }_t({v_2 }(|{u_1 }|-|{u_2 }|) - |{u_2 }|({v_2 }-{v_1 }))\big|^2dx \\&
		+ \int_0^{L }\big| {u_1 }|{u_1 }|{v_2 }^2({v_1 }_t-{v_2 }_t)+ {v_2 }_t({u_1 }|{u_1 }|({v_2 }^2-{v_1 }^2)+({u_1 }(|{u_1 }|-|{u_2 }|)+({u_1 }-{u_2 })|{u_2 }|){v_1 }^2)\big|^2dx \nonumber\\ &
		\lesssim \|u_1\|_{L^\infty(\Omega)}^2\|v_1\|_{L^\infty(\Omega)}^2\|v_2\|_{L^\infty(\Omega)}^4\|{u_1}_t-{u_2}_t\|_{L^2(\Omega)}^2\\
		&+\|v_1\|_{L^\infty(\Omega)}^2\|v_2\|_{L^\infty(\Omega)}^2\|{u_2}_t\|_{L^2(\Omega)}^2\left(\|{v_2 }\|_{L^\infty(\Omega)}^2\|u_1-u_2\|_{L^\infty(\Omega)}^2 + \|u_2\|_{L^\infty(\Omega)}^2\|v_2-v_1\|_{L^\infty(\Omega)}^2\right) \\&
		+ \|u_1\|_{L^\infty(\Omega)}^4\|v_2\|_{L^\infty(\Omega)}^4\|{v_1 }_t-{v_2 }_t\|_{L^2(\Omega)}^2 
		+ \|{v_2 }_t\|_{L^2(\Omega)}^2\|{u_1}\|_{L^\infty(\Omega)}^4\|{v_2 }-{v_1 }\|_{L^\infty(\Omega)}^2\|v_1 + v_2\|_{L^2(\Omega)}^2 \\
		&+ \|v_1\|_{L^\infty(\Omega)}^4\left(\|u_1\|_{L^\infty(\Omega)}^2+\|u_2\|_{L^\infty(\Omega)}^2\right)\|u_1-u_2\|_{L^\infty(\Omega)}^2 \nonumber\\ &
		\lesssim b^2d^6\|{u_1}_t-{u_2}_t\|_{L^2(\Omega)}^2
		+d^4\kappa_t^2\left(d^2\|u_1-u_2\|_{L^\infty(\Omega)}^2 + b^2\|v_2-v_1\|_{L^\infty(\Omega)}^2\right) \\&
		+b^4d^4\|{v_1 }_t-{v_2 }_t\|_{L^2(\Omega)}^2 
		+ 4\kappa_t^2b^4d^2\|{v_2 }-{v_1 }\|_{L^\infty(\Omega)}^2
		+ 2d^4b^2\|u_1-u_2\|_{L^\infty(\Omega)}^2 \nonumber\\ &
		\lesssim\|{u_1}_t-{u_2}_t\|_{L^2(\Omega)}^2+\|{v_1 }_t-{v_2 }_t\|_{L^2(\Omega)}^2
		+\|u_1-u_2\|_{L^\infty(\Omega)}^2 + \|v_1-v_2\|_{L^\infty(\Omega)}^2 \\&
		\lesssim\|D_t{\B{w}_1}(t)-D_t{\B{w}_2}(t)\|_{\BL^2(\Omega)}^2
		+\|\B{w}_1(t)-\B{w}_2(t)\|_{\BL^\infty(\Omega)}^2 \\&
		\lesssim \|D_t{\B{w}_1}(t)-D_t{\B{w}_2}(t)\|_{\BL^2(\Omega)}^2+\kappa_{1,\infty}\|\B{w}_1(t)-\B{w}_2(t)\|_{\BH^1(\Omega)}^2.	
	\end{align*}
	Finally, similarly to above (by essentially exchanging time with space derivative) we have 
	\begin{align*}
		&	\left\|\partial_x \BF(\B{w}_1)(t) - \partial_x \BF(\B{w}_2)(t)\right\|_{\BL^2(\Omega)}^2 
		\lesssim \kappa_{1,\infty}\left\|\B{w}_1(t) - \B{w}_2(t)\right\|_{\BH^1(\Omega)}^2+\left\|\partial_x\B{w}_1(t) - \partial_x\B{w}_2(t)\right\|_{\BL^2(\Omega)}^2.
	\end{align*}
\end{proof}
\subsection{Semilinear analysis
}
Let $\Phi \in \BU^{ad}$ and define $\B{v}_0 = \B{v}_0(t,\B{x}) \coloneqq \B{v}_{\mathrm{e}}$ and $\B{u}_0 \coloneqq \B{v}_0 - \BB_1\Phi.$ Let $\B{u}_1 = \B{u}_1(t,\B{x})$ be the classical solution of 	
\begin{subnumcases}{\label{it1}}
	{\B{u}_1}_t = (\BA+\BP) \B{u}_1 + \BB\Phi + \BF(\B{v}_0), \label{it1a} \\[2mm]
	\B{u}_1(0) =\B{v}_e-\BB_1\Phi(0) \label{it1b} 
\end{subnumcases}
guaranteed to exist by Theorem \ref{linearu} since $\B{v}_{\mathrm{e}} - \BB_1\Phi(0) \in \calD(\BA)$ and the forcing term $\BB\Phi + \BF(\B{v}_0) \in H^1(0,T;\BL^2(\Omega)).$ 

With reference to $r>0$ in \eqref{rKK} and $c_1 \coloneqq \|\BB_1\Phi^1\|_{\BH^1(\Omega)}$, where $\Phi^1 = (1,\cdots,1)^\top \in \BR^{2m}$, we assume that $c_1\kappa_{\BU} \ll r$ (see Definition \ref{admbc2}). Let $T_1 > 0$ be the largest time $0 < T_1 \leqslant T$ such that \begin{equation}\label{t1time}\|\B{u}_1-\B{u}_0\|_{C(\overline{Q}_{T_1})} < r - c_1{\kappa_\BU}.
\end{equation} For the time being, we assume that such a time exists and continue the construction.

Let $\B{v}_1 = \B{v}_1(t,\B{x}) \coloneqq \B{u}_1 + \BB_1\Phi.$ Then, it follows from \eqref{t1time} that 
\begin{align*}
	\|\B{v}_1 - \B{v}_\mathrm{e}\|_{C(\overline{Q}_{T_1})} 
	&= \|\B{v}_1 - \B{v}_\mathrm{e}\|_{C(\overline{Q}_{T_1})} - \|\BB_1(\Phi - \Phi_\mathrm{e})\|_{C(\overline{Q}_{T_1})} +  \|\BB_1(\Phi - \Phi_\mathrm{e})\|_{C(\overline{Q}_{T_1})} \\ 
	& \leqslant \|\B{v}_1 - \B{v}_\mathrm{e} - \BB_1(\Phi - \Phi_\mathrm{e})\|_{C(\overline{Q}_{T_1})} +  \|\BB_1(\Phi - \Phi_\mathrm{e})\|_{C(\overline{Q}_{T_1})} \\ 
	&\leqslant \|\B{u}_1-\B{u}_0\|_{C(\overline{Q}_{T_1})} + c_1{\kappa_\BU} \leqslant r - c_1{\kappa_\BU} + c_1{\kappa_\BU} = r, 
\end{align*}
which means, via \eqref{rKK}, that $\B{v}_1 \in \BBX_{T_1}^\mathscr{K}(\BH^1(\Omega),\BL^2(\Omega))$, and, by Lemma \ref{regF} it follows that $\BF(\B{v}_1) \in \BBX_{T_1}(\BH^1(\Omega),\BL^2(\Omega)).$

Inductively, assuming that $\B{v}_k \in   \BBX_{T_k}^\mathscr{K}(\BH^1(\Omega),\BL^2(\Omega))$ is constructed, let $\B{u}_{k+1} = \B{u}_{k+1}(t,\B{x})$ be the classical solution of 
\begin{subnumcases}{\label{it1nnn}}
	{\B{u}_{k+1}}_t = (\BA+\BP) \B{u}_{k+1} + \BB\Phi + \BF(\B{v}_k), \label{it1annn} \\[2mm]
	\B{u}_{k+1}(0) =\B{v}_e-\BB_1\Phi(0) \label{it1bnnn} 
\end{subnumcases}
again guaranteed to exist by Theorem \ref{linearu} since $\B{v}_{\mathrm{e}} - \BB_1\Phi(0) \in \calD(\BA)$ and the forcing term $\BB\Phi + \BF(\B{v}_k) \in H^1(0,T_k;\BL^2(\Omega)).$ 

With the same assumption on $r, c_1$ and ${\kappa_\BU}$, let $T_{k+1} > 0$ be the largest time $0 < T_{k+1} \leqslant T_{k} \leqslant T$ such that \begin{equation}\label{tk1time}
	\|\B{u}_{k+1}-\B{u}_0\|_{C(\overline{Q}_{T_k})} < r - c_1{\kappa_\BU}.
\end{equation} 
Again we assume that such a time exists for the time being. We then construct $\B{v}_{k+1} = \B{v}_{k+1}(t,\B{x}) \coloneqq \B{u}_{k+1} + \BB_1\Phi.$ Then, it follows from \eqref{tk1time} that 
\begin{align*}
	\|\B{v}_{k+1} - \B{v}_\mathrm{e}\|_{C(\overline{Q}_{T_1})} 
	&\leqslant \|\B{u}_{k+1}-\B{u}_0\|_{C(\overline{Q}_{T_1})} + c_1{\kappa_\BU} \leqslant r - c_1{\kappa_\BU} + c_1{\kappa_\BU} = r, 
\end{align*}
which means, via \eqref{rKK}, that $\B{v}_{k+1} \in \BBX_{T_{k+1}}^\mathscr{K}(\BH^1(\Omega),\BL^2(\Omega))$.

The next lemma yields that the sequence of times $(T_{k})$ as constructed above has a positive lower bound.

\begin{lemma}\label{timelower} 
	There exist $M, K, T_\ast > 0$ such that the sequences of solutions $(\B{v}_{k})$ and $(\B{u}_k)$ constructed above are such that \begin{itemize}
		\item[\bf (a)] $\|\B{v}_k - \B{v}_\mathrm{e}\|_{C(\overline{Q}_{T_\ast})} \leqslant r, $
		\item[\bf (b)] $\|{\B{u}_k}-\B{u}_0\|_{C([0,T_\ast];\BH^1(\Omega))} \leqslant K,$
		\item[\bf (c)] $\|{\B{u}_k}_t\|_{C([0,T_\ast];\BL^2(\Omega))} \leqslant M.$
	\end{itemize} 
\end{lemma}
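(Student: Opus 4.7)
My plan is to establish parts (b) and (c) by an inductive a priori estimate on the quantity
\[
X_k(T) := \|\B{u}_k-\B{u}_0\|_{C([0,T];\BH^1(\Omega))} + \|\partial_t\B{u}_k\|_{C([0,T];\BL^2(\Omega))},
\]
and then to derive (a) by a short-time interpolation argument. The crucial observation is that, because each $\B{v}_k$ by construction lies in the fixed compact set $\BCK$ (bounded away from the vacuum) on its current interval of definition, the coupling constants generated by the nonlinearity $\BF$ do \emph{not} depend on $k$; they depend only on $\BCK$, $\B{v}_\mathrm{e}$ and $\Phi$.

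To set up the recursion, I first use $\B{v}_k(t,\B{x})\in\BCK$ together with the explicit form of $\BF$ to get $\|\BF(\B{v}_k)\|_{C([0,T];\BL^2(\Omega))}\lesssim 1$, with constant depending only on $\BCK$. For the time derivative I apply \eqref{LipF2} with $\B{w}_1=\B{v}_k$ and $\B{w}_2\equiv\B{v}_\mathrm{e}$ (so that $\partial_t\BF(\B{v}_\mathrm{e})=0$), substitute $\B{v}_k=\B{u}_k+\BB_1\Phi$, and invoke Lemma \ref{regB} for the control contributions, obtaining
\[
\|\partial_t\BF(\B{v}_k)(t)\|_{\BL^2(\Omega)} \lesssim C_\Phi + X_k(T).
\]
Plugging this into the linear estimates \eqref{contest3n} and \eqref{contest2} from Theorem \ref{linearu} applied to \eqref{it1nnn}, and bounding each time-integral by $T$ times a supremum, yields a recursive inequality of the form
\[
X_{k+1}(T) \le C_0 + C_1\,T\,X_k(T),
\]
where $C_0$ absorbs the $k$-independent compatibility datum $(\BA+\BP)(\B{v}_\mathrm{e}-\BB_1\Phi^\mathrm{e})+\BB\Phi^\mathrm{e}+\BF(\B{v}_\mathrm{e})$ together with norms of $\Phi$ and $\B{v}_\mathrm{e}$, while $C_1$ depends only on $\BCK$, $\B{v}_\mathrm{e}$ and $\Phi$. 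Noting that the base value $X_0$ reduces to $\|\BB_1 \partial_t\Phi\|_{C([0,T];\BL^2(\Omega))}$ which is bounded by data, I then choose $T_\ast$ so small that $C_1 T_\ast \le 1/2$; the induction closes with $X_k(T_\ast)\le 2C_0 + X_0$ for every $k$, proving (b) and (c) with explicit $K$ and $M$.

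For (a), I exploit the common initial value $\B{u}_{k+1}(0)=\B{u}_0(0)$ to write
\[
\|\B{u}_{k+1}(t)-\B{u}_0(t)\|_{\BL^2(\Omega)} \le t\bigl(M + \|\partial_t\B{u}_0\|_{C([0,T_\ast];\BL^2(\Omega))}\bigr),
\]
while step (b) provides a uniform bound on $\|\B{u}_{k+1}(t)-\B{u}_0(t)\|_{\BH^1(\Omega)}$. The interpolation inequalities \eqref{emb_epsinf}--\eqref{emb_epsinter} then produce a factor $t^{1/2-\varepsilon}$ in the $C(\overline\Omega)$-norm of the difference, so by further shrinking $T_\ast$ I can ensure $\|\B{u}_{k+1}-\B{u}_0\|_{C(\overline Q_{T_\ast})}\le r - c_1\kappa_\BU$. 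Adding the control perturbation $\|\BB_1(\Phi-\Phi^\mathrm{e})\|_{C(\overline Q_{T_\ast})}\le c_1\kappa_\BU$ and invoking \eqref{rKK} yields (a); the base case $\B{v}_0\equiv\B{v}_\mathrm{e}$ is immediate. The same argument shows $T_{k+1}\ge T_\ast$ for all $k$, so the iteration is genuinely well-defined on the common interval $[0,T_\ast]$.

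The main technical obstacle is precisely that $\BF$ is not globally Lipschitz: it degenerates as the pressure component approaches zero, so the Lipschitz-type constants in Lemma \ref{regF} would blow up near the vacuum and destroy $k$-uniformity of $C_1$. Closing the recursion therefore relies crucially on keeping every iterate inside the fixed compact set $\BCK$ bounded away from zero pressure, which is exactly the role played by the \emph{suitable} compact sets of Definition \ref{suitable} combined with the steady-state initial condition.
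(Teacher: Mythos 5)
Your proposal is correct and follows essentially the same route as the paper's proof: an induction that combines the linear continuity estimates of Theorem \ref{linearu} with the Lipschitz bounds of Lemma \ref{regF} (whose constants are $k$-uniform precisely because the iterates stay in the suitable compact set away from vacuum), followed by the interpolation inequalities \eqref{emb_epsinter}--\eqref{emb_epsinf} to upgrade the $O(T_\ast)$ smallness of the $\BL^2$-difference together with the uniform $\BH^1$ bound into the sup-norm bound $r-c_1\kappa_\BU$ that keeps $T_{k+1}\geqslant T_\ast$. The only cosmetic differences are that you package the induction as an explicit linear recursion $X_{k+1}\leqslant C_0+C_1TX_k$ and obtain the $\BL^2$-smallness from the fundamental theorem of calculus, whereas the paper applies the Gr\"onwall-type estimate \eqref{contest1} directly to the difference $\B{z}_{k+1}=\B{u}_{k+1}-\B{u}_0$, whose equation has the small right-hand side $\BB(\Phi-\Phi^{\mathrm{e}})+\BF(\B{v}_k)-\BF(\B{v}_{\mathrm{e}})$; both arguments exploit the steady-state structure of the initial datum in the same way.
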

\begin{proof}
	The proof is by induction. Recall that $T$ is a \emph{fixed} large time. 

	We start by showing the base step ($k = 1$). To this end, notice that, since $\B{v}_\mathrm{e}$ is a steady state solution for the Euler system, the function $\B{z} \coloneqq \B{u}_1 - \B{u}_0 \in \BBX_{T}(\calD(\BA),\BL^2(\Omega))$ solves the abstract Cauchy problem \begin{subnumcases}{\label{it2}}
	\B{z}_t = (\BA+\BP) \B{z} + \BB(\Phi-\Phi_\mathrm{e}), \label{it2a} \\[2mm]
	\B{z}(0) = 0. \label{it2b} 
	\end{subnumcases} It then follows from \eqref{contest1} that 
	\begin{align}\label{contest1n} 
		\|\B{z}\|_{C([0,T];{\BL^2(\Omega)})} 
		&\leqslant e^{\gamma T}\int_0^T\|\BB(\Phi-\Phi_\mathrm{e})(\tau)\|_{\BL^2(\Omega)}d\tau \nonumber \\ 
		&\leqslant e^{\gamma T}T\|\BB(\Phi-\Phi_\mathrm{e})\|_{C([0,T];\BL^2(\Omega))} \lesssim e^{\gamma T}T,
	\end{align} 
	and from \eqref{contest3n} it follows that  
	\begin{align}\label{contest3n1} 
		\|\B{z}_t\|_{C([0,T];{\BL^2(\Omega)})} 
		&\leqslant e^{\gamma T}\int_0^T\|D_t\BB(\Phi-\Phi_\mathrm{e})(\tau)\|_{\BL^2(\Omega)}d\tau \nonumber \\
		&  \leqslant e^{\gamma T}\|D_t \BB(\Phi-\Phi_\mathrm{e})\|_{L^1(0,T;\BL^2(\Omega))} 
		\lesssim e^{\gamma T},
	\end{align} and from \eqref{contest2} that 
	
	\begin{equation}\label{contest21n} 
		\|\B{z}\|_{C([0,T];\BH^1(\Omega))} \lesssim e^{\gamma T}\eta + \|\BB(\Phi-\Phi_\mathrm{e})\|_{H^1(0,T;\BL^2(\Omega))} \lesssim %\eta
		e^{\gamma T}.
	\end{equation}
	Now, it follows from \eqref{emb_epsinter} that for $0 < \varepsilon \ll 1/2$ we have 
	\begin{align}
		\label{inter1} \|\B{z}\|_{C([0,T];\BH^{1/2 + \varepsilon}(\Omega))} 
		&\lesssim \|\B{z}\|_{C([0,T];\BH^1(\Omega))}^{1/2+\varepsilon}\|\B{z}\|_{C([0,T];\BL^2(\Omega))}^{1/2-\varepsilon} \lesssim e^{\gamma T}T^{1/2-\varepsilon}.
	\end{align}

	Whence, from \eqref{emb_epsinf} we obtain that 
	\begin{equation}\label{lemmaa} 
		\|\B{z}\|_{C(\overline{Q}_T)} \lesssim \|\B{z}\|_{C([0,T];\BH^{1/2 + \varepsilon}(\Omega))} \lesssim e^{\gamma T}T^{1/2-\varepsilon} \leqslant r - c_1{\kappa_\BU},
	\end{equation} 
	for $T$ small. From here we get $T_1.$ Since $T_1 \leqslant T$, from \eqref{contest21n} we get $K$ and from \eqref{contest3n1} we get $M$. This proves the step $k = 1$ of the induction argument. 

	We now assume that $\B{v}_k$ and $\B{u}_k$ satisfy (a), (b) and (c) in the statement for appropriate choices of $K, M$ and $T_\ast.$ 

	The function $\B{z}_{k+1} \coloneqq \B{u}_{k+1} - \B{u}_0 \in \BBX_{T_\ast}(\calD(\BA),\BL^2(\Omega))$ solves the abstract Cauchy problem \begin{subnumcases}{\label{it3}}
		{\B{z}_{k+1}}_t = (\BA+\BP) {\B{z}_{k+1}} + \BB(\Phi-\Phi_\mathrm{e}) + \BF(\B{v}_k)-\BF(\B{v}_{\mathrm{e}}), \label{it3a} \\[2mm]
		\B{z}_{k+1}(0) = 0. \label{it3b} 
	\end{subnumcases} 
	It follows from \eqref{contest1} and \eqref{LipF0} that	
	\begin{align}\label{contest1nn} 
		\|\B{z}_{k+1}\|_{C([0,T_\ast];{\BL^2(\Omega)})} 
		& \leqslant e^{\gamma T_\ast}\left(\int_0^{T_\ast}\|\BB(\Phi-\Phi_\mathrm{e})(\tau)\|_{\BL^2(\Omega)}d\tau  + \int_0^{T_\ast}\|\BF(\B{v}_k)(\tau)-\BF(\B{v}_{\mathrm{e}})(\tau)\|_{\BL^2(\Omega)}d\tau\right)\nonumber \\ 
		&\lesssim e^{\gamma T_\ast}T_\ast\left(\|\BB(\Phi-\Phi_\mathrm{e})\|_{C([0,T_\ast];\BL^2(\Omega))}+\|\B{v}_k-\B{v}_{\mathrm{e}}\|_{C([0,T_\ast];\BL^2(\Omega))}\right) \nonumber \\ 
		&\lesssim e^{\gamma T_\ast}T_\ast\left(1+\|\B{v}_k-\B{v}_{\mathrm{e}}\|_{C(\overline{Q}_{T_\ast})}\right) %\nonumber \\ &
		\lesssim e^{\gamma T}T_\ast.
	\end{align}  
	Then, from \eqref{contest3n} and \eqref{LipF2} we infer
 	\begin{align}\label{contest3nn1} 
		&\|{\B{z}_{k+1}}_t\|_{C([0,T_\ast];{\BL^2(\Omega)})} \nonumber \\ 
		&\leqslant e^{\gamma T_\ast}\left(\int_0^{T_\ast}\|D_t\BB(\Phi-\Phi_\mathrm{e})(\tau)\|_{\BL^2(\Omega)}d\tau + \int_0^{T_\ast}\|D_t(\BF(\B{v}_k)-\BF(\B{v}_{\mathrm{e}}))(\tau)\|_{\BL^2(\Omega)}d\tau\right)\nonumber \\ 
		& \lesssim e^{\gamma T_\ast}+ e^{\gamma T_\ast}\int_0^{T_\ast}\left[\|\B{v}_k(\tau)-\B{v}_{\mathrm{e}}\|_{\BH^{1}(\Omega)} + \|\partial_t\B{v}_k(\tau)\|_{\BL^2(\Omega)}\right]d\tau \nonumber \\ 
		& \lesssim (1+T_\ast)e^{\gamma T_\ast}+ e^{\gamma T_\ast}\int_0^{T_\ast}\left[\|\B{u}_k(\tau)-\B{u}_0\|_{\BH^1(\Omega)} + \|\partial_t\B{u}_k(\tau)\|_{\BL^2(\Omega)}\right]d\tau \nonumber \\ 
		&  \lesssim (1+T_\ast)e^{\gamma T_\ast} \leqslant (1+T)e^{\gamma T},
	\end{align} 
	since $T_\ast \leqslant T.$ Finally from \eqref{contest2}, \eqref{contest3nn1} and \eqref{LipF0} it follows that
	\begin{align}\label{contest21} 
		\|\B{z}_{k+1}\|_{C([0,T_\ast];\BH^1(\Omega))} 
		&\lesssim (1+T)e^{\gamma T} + \|\BB(\Phi-\Phi_\mathrm{e})\|_{C([0,T_\ast];\BL^2(\Omega))} + \|\BF(\B{v}_k)-\BF(\B{v}_{\mathrm{e}})\|_{C([0,T_\ast];\BL^2(\Omega))} \nonumber \\ 
		&\lesssim (1+T)e^{\gamma T} + \|\BB(\Phi-\Phi_\mathrm{e})\|_{H^1(0,T_\ast;\BL^2(\Omega))} + \|\B{v}_k-\B{v}_{\mathrm{e}}\|_{C([0,T_\ast];\BL^2(\Omega))} \nonumber \\ 
		&\lesssim (1+T)e^{\gamma T}.
	\end{align} 
	We finish the argument by again using the interpolation inequality \eqref{emb_epsinter}, i.e.,  for $0 < \varepsilon \ll 1/2$ we have 
	\begin{align}
		\label{inter1n} \|\B{z}_{k+1}\|_{C(\overline{Q}_T)} 
		&\lesssim \|\B{z}_{k+1}\|_{C([0,T_\ast];\BH^{1/2 + \varepsilon}(\Omega))} \nonumber \\ 
		&\lesssim \|\B{z}_{k+1}\|_{C([0,T_\ast];\BH^1(\Omega))}^{1/2+\varepsilon}\|\B{z}_{k+1}\|_{C([0,T_\ast];\BL^2(\Omega))}^{1/2-\varepsilon} \nonumber \\ 
		&\lesssim [(1+T)e^{\gamma T}]^{1/2+\varepsilon}(e^{\gamma T}T_\ast)^{1/2-\varepsilon} \lesssim {T_\ast}^{1/2-\varepsilon} \leqslant r - c_1{\kappa_\BU},
	\end{align} 
	for $T_\ast$ small. Since the last constant in \eqref{inter1n} does \emph{not} depend on $T_\ast$, the proof follows.
\end{proof}

The next lemma is crucial to guarantee that the constructed sequence has a limit.

\begin{lemma}[\bf Contraction] 
	There exists $T_{c}$, possibly smaller than $T_\ast$, and $0 < \delta < 1$ such that the sequence $(\B{u}_k)$ satisfies 
	\begin{equation}\label{contract} 
		\|\B{u}_{k+1}-\B{u}_k\|_{C([0,T_c];\BL^2(\Omega))} \leqslant \delta \|\B{u}_{k}-\B{u}_{k-1}\|_{C([0,T_c];\BL^2(\Omega))}. 
	\end{equation}
\end{lemma}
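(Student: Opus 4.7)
The plan is to subtract consecutive iterations to derive a linear Cauchy problem for the differences $\B{w}_k \coloneqq \B{u}_{k+1}-\B{u}_k$, then apply the $L^2$-continuity estimate \eqref{contest1} from Theorem \ref{linearu} in tandem with the Lipschitz-type bound \eqref{LipF0} of Lemma \ref{regF}, and finally shrink the time horizon until the resulting prefactor becomes strictly less than one.

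First I would note that, because $\B{v}_j = \B{u}_j + \BB_1\Phi$ with the \emph{same} $\Phi$ for every index, we have $\B{v}_k - \B{v}_{k-1} = \B{w}_{k-1}$, and both iterations \eqref{it1nnn} share the initial datum $\B{v}_{\mathrm{e}} - \BB_1\Phi(0)$ and the boundary forcing $\BB\Phi$. Subtracting the iteration at step $k+1$ from that at step $k$ makes these common terms cancel, leaving the homogeneous-data Cauchy problem
\begin{equation*}
{\B{w}_k}_t = (\BA+\BP)\B{w}_k + \big[\BF(\B{v}_k) - \BF(\B{v}_{k-1})\big], \qquad \B{w}_k(0) = 0.
\end{equation*}
Since $\B{v}_k, \B{v}_{k-1} \in \BBX_{T_\ast}^{\BCK}(\BH^1(\Omega),\BL^2(\Omega))$ by Lemma \ref{timelower}(a), Lemma \ref{regF} ensures that the inhomogeneity belongs to $H^1(0,T_\ast;\BL^2(\Omega))$, so Theorem \ref{linearu} applies with $\B{u}_0 = 0$, $\BB\Phi \equiv 0$, and $G = \BF(\B{v}_k) - \BF(\B{v}_{k-1})$. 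The estimate \eqref{contest1} then gives, for every $T_c \in (0, T_\ast]$,
\begin{equation*}
\|\B{w}_k\|_{C([0,T_c];\BL^2(\Omega))} \leqslant e^{\gamma T_c}\int_0^{T_c}\|\BF(\B{v}_k)(\tau) - \BF(\B{v}_{k-1})(\tau)\|_{\BL^2(\Omega)}\, d\tau.
\end{equation*}

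Applying \eqref{LipF0} pointwise in $\tau$ (with a constant $C_F$ depending only on $\BCK$) and using $\B{v}_k-\B{v}_{k-1} = \B{w}_{k-1}$, the right-hand side is bounded by $C_F T_c e^{\gamma T_c} \|\B{w}_{k-1}\|_{C([0,T_c];\BL^2(\Omega))}$. Choosing $T_c \in (0,T_\ast]$ so small that
\begin{equation*}
\delta \coloneqq C_F T_c e^{\gamma T_c} < 1,
\end{equation*}
which is possible because $t \mapsto C_F t e^{\gamma t}$ is continuous and vanishes at $t = 0$, delivers \eqref{contract} with a constant $\delta$ that is uniform in $k$ (since $C_F$ and $\gamma$ depend only on $\BCK$ and on the bounded perturbation $\BP$, not on $k$ or $T_c$).

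There is no substantive obstacle here, as the analytic heavy lifting has already been carried out in Theorem \ref{linearu} and Lemma \ref{regF}. The only delicate point worth emphasizing is the cancellation of both the Dirichlet lift $\BB_1\Phi$ and the boundary forcing $\BB\Phi$ upon differencing, which is precisely what allows the contraction to be closed by the smallness of $T_c$ alone, rather than by any smallness assumption on the control $\Phi$ itself.
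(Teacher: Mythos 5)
Your proposal is correct and follows essentially the same route as the paper: subtract consecutive iterates to obtain the homogeneous-data Cauchy problem for $\B{w}_k$ (with the common terms $\BB\Phi$ and the initial datum cancelling), apply the continuity estimate \eqref{contest1} together with the Lipschitz bound \eqref{LipF0} on the compact set $\BCK$, and shrink $T_c$ until the prefactor is below one. The only differences are cosmetic (an index shift in the definition of $\B{w}_k$ and a slightly more explicit justification that the forcing term lies in $H^1(0,T;\BL^2(\Omega))$ so that Theorem \ref{linearu} applies).
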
 
\begin{proof}
	The proof is rather straightforward. Notice that if $\B{w}_{k+1} \coloneqq \B{u}_{k+1}-\B{u}_k$, then it follows by \eqref{it3} that $\B{w}_{k+1}$ satisfies the abstract Cauchy problem 
	\begin{subnumcases}{\label{it4}}
		{\B{w}_{k+1}}_t = (\BA+\BP) {\B{w}_{k+1}} + \BF(\B{v}_k)-\BF(\B{v}_{k-1}), \label{it4a} \\[2mm]
		\B{z}_{k+1}(0) = 0. \label{it4b} 
	\end{subnumcases} 
	Then it follows from \eqref{contest1} and \eqref{LipF0} that	
	\begin{align}\label{contest1nnn} 
		\|\B{w}_{k+1}\|_{C([0,T_\ast];{\BL^2(\Omega)})} 
		&\leqslant e^{\gamma T_\ast} \int_0^{T_\ast}\|\BF(\B{v}_k)(\tau)-\BF(\B{v}_{k-1})(\tau)\|_{\BL^2(\Omega)}d\tau \nonumber \\ 
		&\lesssim e^{\gamma T_\ast} \int_0^{T_\ast}\|\B{v}_k(\tau)-\B{v}_{k-1}(\tau)\|_{\BL^2(\Omega)}d\tau\nonumber \\ 
		&= e^{\gamma T_\ast} \int_0^{T_\ast}\|\B{u}_k(\tau)-\B{u}_{k-1}(\tau)\|_{\BL^2(\Omega)}d\tau 
		\lesssim T_\ast\|\B{w}_k\|_{C([0,T^\ast];\BL^2(\Omega))}
	\end{align} whereby the proof is complete for suitably chosen $T_c$.
\end{proof}

As a corollary of the previous lemma, we obtain convergence of $(\B{u}_k)$. 
\begin{corollary}
	There exists $\B{u} \in \BX_{T_c}(\calD(\BA),\BL^2(\Omega))$ such that 
	\begin{equation}\label{conve1} 
		\B{u}_k \to \B{u} \ \mbox{in} \ \BX_{T_c}(\BH^1(\Omega),\BL^2(\Omega))\quad\text{as }k\to\infty.
	\end{equation}
\end{corollary}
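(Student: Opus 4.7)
The plan is to first extract a limit $\B{u}$ from the geometric contraction already established in $C([0,T_c];\BL^2(\Omega))$, and then upgrade the convergence to the full $\BBX_{T_c}(\BH^1(\Omega),\BL^2(\Omega))$-topology by revisiting the linear abstract Cauchy problem satisfied by the differences $\B{w}_{k+1}=\B{u}_{k+1}-\B{u}_k$ and invoking the Lipschitz-type bounds from Lemma \ref{regF}. A short bookkeeping step at the end confirms that the limit in fact lies in $\BBX_{T_c}(\calD(\BA),\BL^2(\Omega))$.

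\textbf{Step 1 (limit in $\BL^2$).} Iterating \eqref{contract} gives the geometric bound $\|\B{u}_{k+1}-\B{u}_k\|_{C([0,T_c];\BL^2(\Omega))}\leqslant \delta^{k}\|\B{u}_1-\B{u}_0\|_{C([0,T_c];\BL^2(\Omega))}$, so $(\B{u}_k)$ is Cauchy in the Banach space $C([0,T_c];\BL^2(\Omega))$ and converges to some $\B{u}\in C([0,T_c];\BL^2(\Omega))$. Note that $(\B{u}_k)$, and therefore a fortiori $(\B{v}_k)=(\B{u}_k+\BB_1\Phi)$, is uniformly bounded in the stronger spaces $C([0,T_c];\BH^1(\Omega))$ and $C^1([0,T_c];\BL^2(\Omega))$ by Lemma \ref{timelower}.

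\textbf{Step 2 (upgrade to $\BBX_{T_c}$).} Apply Theorem \ref{linearu} to the linear IVP \eqref{it4} satisfied by $\B{w}_{k+1}$, which has vanishing initial data and no $\BB\Phi$-forcing. Combining \eqref{contest3n} with the Lipschitz estimate \eqref{LipF2} and recalling that $\B{v}_k-\B{v}_{k-1}=\B{u}_k-\B{u}_{k-1}$, I would obtain
\begin{equation*}
\|\partial_t\B{w}_{k+1}\|_{C([0,T_c];\BL^2(\Omega))}\lesssim e^{\gamma T_c}\!\int_0^{T_c}\!\bigl(\|\B{w}_k(\tau)\|_{\BH^1(\Omega)}+\|\partial_t\B{w}_k(\tau)\|_{\BL^2(\Omega)}\bigr)\,d\tau \lesssim T_c\,e^{\gamma T_c}\|\B{w}_k\|_{\BBX_{T_c}}.
\end{equation*}
Likewise, \eqref{contest2} together with \eqref{LipF0} yields $\|\B{w}_{k+1}\|_{C([0,T_c];\BH^1(\Omega))}\lesssim \|\partial_t\B{w}_{k+1}\|_{C([0,T_c];\BL^2(\Omega))}+\|\B{w}_k\|_{C([0,T_c];\BL^2(\Omega))}$. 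Adding these two bounds and shrinking $T_c$ if necessary, I would conclude that $\|\B{w}_{k+1}\|_{\BBX_{T_c}}\leqslant \tilde\delta\,\|\B{w}_k\|_{\BBX_{T_c}}$ for some $\tilde\delta\in(0,1)$. Hence $(\B{u}_k)$ is Cauchy also in $\BBX_{T_c}(\BH^1(\Omega),\BL^2(\Omega))$ and converges in this topology to $\B{u}$ by completeness.

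\textbf{Step 3 (limit belongs to $\calD(\BA)$).} By Theorem \ref{maxmon}, $\calD(\BA)$ equipped with the graph norm is a closed subspace of $\BH^1(\Omega)$ (the boundary and Kirchhoff/pressure-continuity conditions being preserved by $\BH^1$-convergence in one spatial dimension via the trace). Since each $\B{u}_k(t)\in\calD(\BA)$ and $\B{u}_k(t)\to\B{u}(t)$ in $\BH^1(\Omega)$ uniformly in $t\in[0,T_c]$, the limit satisfies $\B{u}(t)\in\calD(\BA)$ for every $t$; equivalently $(\BA+\BP)\B{u}\in C([0,T_c];\BL^2(\Omega))$. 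Passing to the limit in \eqref{it1nnn} (using the continuity of $\BF$ from Lemma \ref{regF}) identifies $\B{u}$ as a classical solution, giving $\B{u}\in\BBX_{T_c}(\calD(\BA),\BL^2(\Omega))$, as claimed. The principal technical hurdle is Step 2: the contraction at the $\BL^2$-level does not close directly for the $\BBX$-norm, and one must exploit the zero initial/boundary contribution in \eqref{it4} together with both parts (space and time regularity) of Lemma \ref{regF} simultaneously in order to extract a common factor of $T_c$ in all relevant estimates.
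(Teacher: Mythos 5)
Your overall route---extract the limit from the $\BL^2$-level contraction, then upgrade to the $\BBX_{T_c}(\BH^1(\Omega),\BL^2(\Omega))$-topology by applying the linear estimates of Theorem \ref{linearu} to the difference system \eqref{it4} together with Lemma \ref{regF}, and finally identify the limit as a classical solution with values in $\calD(\BA)$---is the same as the paper's, and Steps 1 and 3 are sound. The pivotal claim of Step 2, however, does not follow from the two bounds you display. From \eqref{contest2} and \eqref{LipF0} you correctly obtain
\begin{equation*}
\|\B{w}_{k+1}\|_{C([0,T_c];\BH^1(\Omega))}\;\leqslant\; C_2\,\|\partial_t\B{w}_{k+1}\|_{C([0,T_c];\BL^2(\Omega))}\;+\;C_3\,\|\B{w}_{k}\|_{C([0,T_c];\BL^2(\Omega))},
\end{equation*}
but the constant $C_3$ comes from the norm equivalence $\|\cdot\|_{\BH^1(\Omega)}\simeq\|(\BA+\BP)\cdot\|_{\BL^2(\Omega)}$ on $\calD(\BA)$ and from the Lipschitz constant in \eqref{LipF0}; it is a fixed $O(1)$ quantity that does \emph{not} shrink with $T_c$. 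Adding this to the time-derivative bound therefore yields $\|\B{w}_{k+1}\|_{\BBX_{T_c}}\leqslant (CT_c+C_3)\|\B{w}_k\|_{\BBX_{T_c}}$, and since nothing forces $C_3<1$, no choice of small $T_c$ gives the contraction $\|\B{w}_{k+1}\|_{\BBX_{T_c}}\leqslant\tilde\delta\|\B{w}_k\|_{\BBX_{T_c}}$ with $\tilde\delta\in(0,1)$. You flagged this term as ``the principal technical hurdle'' but then absorbed it into the contraction claim, which is exactly where the argument breaks.

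The gap is reparable, and the paper's proof shows one way: do not aim for a contraction of the $\BBX_{T_c}$-increments at all, but instead \emph{sum} the recursive inequality over $k$. Writing $a_k\coloneqq\|\B{w}_k\|_{\BBX_{T_c}}$ and $b_k\coloneqq\|\B{w}_k\|_{C([0,T_c];\BL^2(\Omega))}$, the estimates give $a_{k+1}\leqslant CT_c\,a_k+C_3\,b_k$, and since $\sum_k b_k<\infty$ already by the geometric decay \eqref{contract2}, summing and rearranging (for $CT_c<1$) yields $\sum_k a_k<\infty$, i.e.\ $(\B{u}_k)$ is Cauchy in $\BBX_{T_c}(\BH^1(\Omega),\BL^2(\Omega))$; this is precisely the computation in \eqref{add1}. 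Alternatively, one can substitute the one-step $\BL^2$-contraction $b_k\leqslant CT_c\,b_{k-1}\leqslant CT_c\,a_{k-1}$ into the offending term to obtain the two-step recursion $a_{k+1}\leqslant CT_c\,a_k+C'T_c\,a_{k-1}$, whose coefficients are all $O(T_c)$ and which does force geometric decay. Either repair requires an extra observation beyond ``add the two bounds and shrink $T_c$,'' so as written your Step 2 is incomplete.
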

\begin{proof}
	Summing up inequality \eqref{contract} we obtain, after rearranging, 
	\begin{equation}\label{contract2} 
		\sum\limits_{k=1}^{\infty}\|\B{u}_{k+1}-\B{u}_k\|_{C([0,T_c];\BL^2(\Omega))} \leqslant \dfrac{\delta}{1-\delta}  \|\B{u}_{1}-\B{u}_{0}\|_{C([0,T_c];\BL^2(\Omega))} < \infty.
	\end{equation} Moreover, by using $\B{w}_{k+1} = \B{u}_{k+1}-\B{u}_k$ in \eqref{it4} for estimates, it follows from \eqref{contest3n} that 
	\begin{align}\label{contest32n} 
		\|{\B{w}_{k+1}}_t\|_{C([0,T_c];{\BL^2(\Omega)})} 
		&\leqslant e^{\gamma T_c}\int_0^{T_c}\|D_t(\BF(\B{v}_k)-\BF(\B{v}_{k-1}))(
		\tau)\|_{\BL^2(\Omega)}d\tau \nonumber \\ 
		&\lesssim T_c\left[\left\|\B{w}_{k}\right\|_{C([0,T_c];\BH^1(\Omega))}+\left\|\partial_t\B{w}_{k}\right\|_{C([0,T_c];\BL^2(\Omega))}\right]
		\end{align}  and from \eqref{contest2}
	\begin{equation}\label{contest2n} 
		\|{\B{w}_{k+1}}\|_{C([0,T];\BH^1(\Omega))}\lesssim T_c\left[\left\|\B{w}_{k}\right\|_{C([0,T_c];\BH^1(\Omega))}+\left\|\partial_t\B{w}_{k}\right\|_{C([0,T_c];\BL^2(\Omega))}\right] + \|\B{w}_{k}\|_{C([0,T];\BL^2(\Omega))}.
	\end{equation} 
	Then adding \eqref{contest32n} and \eqref{contest2n} and summing up over $k$ we have, after rearranging and using \eqref{contract2},
	\begin{align}\label{add1} 
		&\sum\limits_{k=1}^\infty \|{\B{u}_{k+1}-\B{u}_k}_t\|_{\BX_{T_c}(\BH^1(\Omega),\BL^2(\Omega))} \nonumber \\
		&\lesssim \sum\limits_{k=1}^\infty\|\B{u}_{k+1}-\B{u}_k\|_{C([0,T];\BL^2(\Omega))} +  \dfrac{\delta}{1-\delta}  \|\B{u}_{1}-\B{u}_{0}\|_{C([0,T_c];\BL^2(\Omega))} \nonumber \\ 
		& +\dfrac{T_c}{1-T_c}\left[\|{\B{u}_{1}-\B{u}_0}_t\|_{C([0,T_c];{\BL^2(\Omega)})} + \|{\B{u}_{1}-\B{u}_0}\|_{C([0,T];\BH^1(\Omega))}\right]  < \infty,
	\end{align} 
	for appropriate (smaller, if necessary) $T_c.$ From this it follows that there exists $\B{u} \in \BBX_{T_c}(\BH^1(\Omega),\BL^2(\Omega))$ such that $\B{u}_k \to \B{u}$ in $\BX_{T_c}(\BH^1(\Omega),\BL^2(\Omega))$. It follows from the construction that $\B{u} \in \calD(\BA)$ and solves \begin{subnumcases}{\label{it1nn}}
		{\B{u}}_t = (\BA+\BP) \B{u} + \BB\Phi + \BF(\B{v}), \label{it1nna} \\[2mm]
		\B{u}(0) =\B{v}_e-\BB_1\Phi(0) \label{it1nnb} 
	\end{subnumcases} with $\B{v} = \B{u} + \BB_1\Phi.$ This completes the proof.
\end{proof}

In order to complete the proof of Theorem \ref{mainwpp}, we recall that $\B{v}_k = \B{u}_k + \BB_1\Phi$. By making $k \to \infty$ we get $\B{v}_k \to \B{u} + \BB_1\Phi := \B{v}(\Phi)$ in $\BBX_{T_c}(\BH^1(\Omega),\BL^2(\Omega))$.  Since, from the construction, $\B{v}_k \in \BX_{T_c}^\mathscr{K}(\BH^1(\Omega),\BL^2(\Omega))$ for each $k$, so does $\B{v}(\Phi).$ This justifies the definition of the control-to-state map $\BBS: \BU^{ad} \to \BBX_{T_c}^\BCK(\BH^1(\Omega),\BL^2(\Omega))$ as in \eqref{ctsmap} as $\BS(\Phi) = \B{v}(\Phi).$

\section{Optimal control problem}\label{control}
\subsection{Existence of optimal controls}\label{existence_control}

Rather then with $\BBS$ we work here with $\BS: \BU_0^\eta \subset \BU_0 \to \BBX_{T_c}^\BK\left(\BH^1(\Omega),\BL^2(\Omega)\right)$, with
$\BS(\Phi) := \BBS(\phi(\Phi)) = \BBS(\Phi + \Phi^\mathrm{e})$; see Remark \ref{U0eta}. Thus, we study the well-posedness of the reduced minimization problem:
\begin{equation}
	\begin{aligned}
		\min\limits \ &  J(\Phi) \coloneqq \hat J\left(\Phi,\BS(\Phi)\right) =  \dfrac{1}{2}\|\BS(\Phi)-\B{v}_d\|_{L^2(0,T;\BL^2(\Omega))}^2 + \dfrac{\sigma}{2}\|\Phi\|_{\BU}^2\\
		\mathrm{s.t.} \ \  &\Phi \in \BU_0^\eta \ \mathrm{and} \ \BS(\Phi) \in \BV_{T_c,\BK}^\mathrm{ad}.\\
	\end{aligned}
	\tag{${rP}_{T}^\BK$}\label{minp4n}
\end{equation}
\begin{proposition}[\bf Existence of optimal control]\label{existoptimal3}
	Assume that $\BS:\BU_0^\eta \to L^2(0,T_c;\BL^2(\Omega))$ is {weak--to--strong} continuous, i.e., given a sequence $(\Phi_n)$ in $\BU_0^\eta$ 
	\begin{equation}\label{wtos}
	\Phi_n \rightharpoonup \Phi \ \mbox{\rm in} \ \BU \Longrightarrow \BS(\Phi_n) \to \BS(\Phi) \ \mbox{\rm in} \ L^2(0,T_c;\BL^2(\Omega)).
	\end{equation}
	Then, \eqref{minp4} admits a solution.
\end{proposition}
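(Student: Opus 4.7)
The plan is to apply the direct method of the calculus of variations. First, I would observe that the feasible set is non-empty: $\Phi = 0 \in \BU_0^\eta$ gives $\BS(0) = \BBS(\Phi^\mathrm{e}) = \B{v}_\mathrm{e}$ (since $\Phi^\mathrm{e}$ is precisely the boundary trace of the steady state), and suitability of $\BK$ combined with Assumption \ref{SSnet} places $\B{v}_\mathrm{e}$ pointwise in $\mathrm{int}(\BK)$, so that $\B{v}_\mathrm{e} \in \BV_{T_c,\BK}^\mathrm{ad}$. Since $J \geqslant 0$, the infimum $j^\ast := \inf J$ is finite, and I can fix a minimizing sequence $(\Phi_n) \subset \BU_0^\eta$ with $\BS(\Phi_n) \in \BV_{T_c,\BK}^\mathrm{ad}$ and $J(\Phi_n) \to j^\ast$.

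Next I would extract a weakly convergent subsequence. The set $\BU_0^\eta$ is bounded by $\eta$ in the reflexive Hilbert space $\BU_0 = [H^2(0,T)]^{2m}$, convex, and norm-closed: the $\BU$-ball $\{\|\cdot\|_\BU \leqslant \eta\}$ is trivially closed-convex, while the pointwise condition $\|\Phi\|_{[C([0,T])]^{2m}} \leqslant \kappa_\BU$ cuts out a closed-convex subset of $\BU_0$ thanks to the continuous embedding \eqref{emb_12}. Mazur's theorem then yields weak closedness of $\BU_0^\eta$, and reflexivity produces (along a not relabelled subsequence) a weak limit $\Phi_n \rightharpoonup \Phi^\ast$ with $\Phi^\ast \in \BU_0^\eta$.

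Finally, I would pass to the limit. The standing hypothesis \eqref{wtos} delivers $\BS(\Phi_n) \to \BS(\Phi^\ast)$ strongly in $L^2(0,T_c;\BL^2(\Omega))$. Since by Theorem \ref{mainwpp} the control-to-state map takes values in $\BBX_{T_c}^\BK(\BH^1(\Omega),\BL^2(\Omega))$ --- a class whose pointwise constraint $\B{v}(t,\B{x}) \in \BK$ on $\overline{Q}_{T_c}$ is built into its definition --- we obtain $\BS(\Phi^\ast) \in \BV_{T_c,\BK}^\mathrm{ad}$ automatically, so $\Phi^\ast$ is feasible. Lower semicontinuity of $J$ then closes the argument: the tracking term $\tfrac{1}{2}\|\cdot-\B{v}_d\|_{L^2(0,T_c;\BL^2(\Omega))}^2$ converges outright by strong $L^2$-convergence, while $\tfrac{\sigma}{2}\|\cdot\|_\BU^2$ is weakly lower semicontinuous as the square of a Hilbert norm; combining these yields $J(\Phi^\ast) \leqslant \liminf_n J(\Phi_n) = j^\ast$, so $\Phi^\ast$ realizes the infimum.

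No substantial obstacle is expected at this stage. The only delicate analytical ingredient --- the weak-to-strong continuity of $\BS$ --- is the proposition's standing hypothesis, and the remainder is the textbook direct-method machinery, with the small technical point being the verification that $\BU_0^\eta$ remains weakly closed in the presence of the sup-norm smallness constraint.
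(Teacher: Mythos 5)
Your argument is the same direct-method scheme as the paper's: nonempty feasible set, minimizing sequence, weak compactness and weak closedness of $\BU_0^\eta$, the hypothesized weak-to-strong continuity of $\BS$, and splitting $J$ into a strongly convergent tracking term plus a weakly lower semicontinuous Tikhonov term. All of that matches, and your verification that the sup-norm cap $\|\Phi\|_{[C([0,T])]^{2m}}\leqslant\kappa_\BU$ survives weak limits via the embedding \eqref{emb_12} is a welcome detail the paper glosses over.

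The one step I would not accept as written is the feasibility of the weak limit. You assert that $\BS(\Phi^\ast)\in\BV_{T_c,\BK}^{\mathrm{ad}}$ \emph{automatically} because the control-to-state map takes values in $\BBX_{T_c}^{\BK}(\BH^1(\Omega),\BL^2(\Omega))$. If that were literally true for the same compact set $\BK$ appearing in the constraint, then \emph{every} $\Phi\in\BU_0^\eta$ would be feasible, the state constraint would be vacuous, and the paper's entire subsequent analysis of non-redundant state constraints, normality, and measure-valued multipliers would be pointless. The intended reading is that the well-posedness theorem confines the state to some suitable compact set associated with the local existence construction, which need not coincide with (and is in general larger than) the constraint set $\BK$; the paper itself flags the case $\BCK=\BK$ as the degenerate one where first-order conditions follow from Fermat's theorem alone. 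The correct argument, and the one the paper uses, does not need any such identification: each $\Phi_n$ is feasible, so $\BS(\Phi_n)(t,\B{x})\in\BK$ pointwise; strong convergence $\BS(\Phi_n)\to\BS(\Phi^\ast)$ in $L^2(0,T_c;\BL^2(\Omega))$ gives a.e.\ convergence along a subsequence; and since $\BK$ is closed the pointwise constraint passes to the limit (then everywhere, by continuity of $\BS(\Phi^\ast)$). Replace your ``automatic'' claim by this closedness argument and the proof is complete.
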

\begin{proof}
	First, notice that the feasible set of \eqref{minp4n}%, i.e., the set of $\Phi \in \BU_0^\eta$ such that $\BS(\Phi) \in \BV_{T_c,\BK}^\mathrm{ad}$, 
  is nonempty since it contains zero. This along with the non-negativity of $J$ allow us to define 
	\begin{equation}\label{inf2} 
		d := \inf\{J(\Phi); \Phi \ \mbox{is feasible}\} \geqslant 0.
	\end{equation} We want to show that $d = J (\Phi^\ast)$ for some feasible $\Phi^\ast$.
	
	By properties of the infimum, there exists a sequence $(\Phi_n)$ of feasible controls such that $J(\Phi_n) \to d$ in $\BR$ as $n \to \infty.$ Since $\BU_0^\eta$ is weakly closed, there exists $\Phi^\ast \in \BU_0^\eta$ and a (non--relabeled) subsequence $(\Phi_n)$ such that $\Phi_n \rightharpoonup \Phi^\ast$ in $\BU.$ By {weak--to--strong} continuity of $\BS:\BU_0^\eta \to L^2(0,T_c;\BL^2(\Omega))$ we have $\BS(\Phi_n) \to \BS(\Phi^\ast).$ Moreover, since $\Phi_n$ is feasible for all $n$ we have $\BS(\Phi_n) \in \BV_{T_c,\BK}^\mathrm{ad}$ for all $n$ and since $\BK$ is compact, it follows that $\BS(\Phi^\ast) \in \BV_{T_c,\BK}^\mathrm{ad}$. Hence, $\Phi^\ast$ is feasible.  
	
	Now, {weak--to--strong} continuity of $\BS$ implies weak--to--strong continuity of the first summand of $J$ and the second summand is weakly lower semi-continuous. Therefore, we have 
	\begin{align*}
		d &= \liminf_{n \to \infty} J(\Phi_n) 
		= \liminf_{n\to \infty} \left[\dfrac{1}{2}\|\BS(\Phi_n)-\B{v}_d\|_{L^2(0,T;\BL^2(\Omega))}^2 + \dfrac{\sigma}{2}\|\Phi_n\|_{\BU}^2 \right] \\
		& = \dfrac{1}{2} \lim_{n\to \infty} \|\BS(\Phi_n)-\B{v}_d\|_{L^2(0,T;\BL^2(\Omega))}^2 + \dfrac{\sigma}{2} \liminf_{n\to \infty}  \|\Phi_n\|_{\BU}^2 
		 \geqslant J(\Phi^\ast) \geqslant d,
	\end{align*} and then $\Phi^\ast$ is an optimal solution.
\end{proof}
We now show that $\BS:\BU_0^\eta \to L^2(0,T_c;\BL^2(\Omega))$ is actually weak--to--strong continuous. For establishing this fact we need a series of lemmas. We start by showing a Lipschitz property.
\begin{lemma}\label{Lip}
	By decreasing $T_c$ (if necessary), the map $\BS: \BU_0^\eta \subset \BU_0 \to \BBX_{T_c}^\BCK(\BH^1(\Omega),\BL^2(\Omega))$ is Lipschitz continuous. 
\end{lemma}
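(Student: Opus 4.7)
The plan is to reduce the statement to the linear estimates of Theorem \ref{linearu} applied to the difference of two solutions, closing the resulting self-referential inequality through smallness of $T_c$. Given $\Phi_1, \Phi_2 \in \BU_0^\eta$, write $\tilde\Phi_i := \Phi_i + \Phi^\mathrm{e}$, set $\B{v}_i := \BS(\Phi_i)$, and let $\B{u}_i := \B{v}_i - \BB_1\tilde\Phi_i$ denote the corresponding solutions of the abstract system \eqref{abs1}. The key observation is that because $\Phi_1(0) = \Phi_2(0) = 0$, both $\B{u}_i$ carry the \emph{same} initial datum $\B{v}_\mathrm{e} - \BB_1\Phi^\mathrm{e}$. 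Consequently the difference $\B{w} := \B{u}_1 - \B{u}_2$ solves
\begin{equation*}
\B{w}_t = (\BA+\BP)\B{w} + \BB(\Phi_1 - \Phi_2) + \BF(\B{v}_1) - \BF(\B{v}_2), \qquad \B{w}(0) = 0,
\end{equation*}
and, since $\B{v}_1(0) = \B{v}_2(0) = \B{v}_\mathrm{e}$, also the auxiliary datum $\B{w}_1$ appearing in \eqref{contest3n} vanishes.

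First I would apply \eqref{contest1} in combination with \eqref{LipF0} and Lemma \ref{regB}, rewriting $\|\B{v}_1-\B{v}_2\|_{\BL^2} \leqslant \|\B{w}\|_{\BL^2} + \|\BB_1(\Phi_1-\Phi_2)\|_{\BL^2}$, to obtain
\begin{equation*}
\|\B{w}\|_{C([0,T_c];\BL^2(\Omega))} \lesssim e^{\gamma T_c} T_c \left( \|\Phi_1-\Phi_2\|_{\BU} + \|\B{w}\|_{C([0,T_c];\BL^2(\Omega))} \right).
\end{equation*}
For $T_c$ small the last term is absorbed into the left-hand side, yielding a first Lipschitz bound $\|\B{w}\|_{C([0,T_c];\BL^2)} \lesssim \|\Phi_1-\Phi_2\|_{\BU}$.

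Next I would combine \eqref{contest3n} with the time-Lipschitz bound \eqref{LipF2}, and \eqref{contest2} with \eqref{LipF1}, observing that both $\|\B{v}_1-\B{v}_2\|_{\BH^1}$ and $\|\partial_t(\B{v}_1-\B{v}_2)\|_{\BL^2}$ split into a $\B{w}$-contribution and a controllable $\BB_1(\Phi_1-\Phi_2)$-contribution. This leads to the combined inequality
\begin{equation*}
\|\B{w}_t\|_{C([0,T_c];\BL^2)} + \|\B{w}\|_{C([0,T_c];\BH^1)} \lesssim \|\Phi_1-\Phi_2\|_{\BU} + T_c \left( \|\B{w}_t\|_{C([0,T_c];\BL^2)} + \|\B{w}\|_{C([0,T_c];\BH^1)} \right),
\end{equation*}
which for $T_c$ sufficiently small produces the desired Lipschitz estimate for $\B{w}$ in $\BBX_{T_c}(\BH^1(\Omega),\BL^2(\Omega))$. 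Translating back from $\B{u}$ to $\B{v}$ via $\B{v}_i = \B{u}_i + \BB_1\tilde\Phi_i$ and using continuity of $\BB_1$ (Lemma \ref{regB}) concludes the argument.

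The main obstacle is the circular dependence between the $\BL^2$, $\BH^1$, and time-derivative norms of $\B{w}$: the nonlinear difference $\BF(\B{v}_1)-\BF(\B{v}_2)$ in $\BL^2$ is controlled by the $\BH^1$-norm of $\B{v}_1-\B{v}_2$ (through \eqref{LipF2}), which in turn is controlled by the time-derivative through \eqref{contest2}. Smallness of $T_c$ furnishes the contraction factor needed to close this loop, which is why the statement allows a possible further shrinking of $T_c$. The uniformity of the Lipschitz constant over $\BU_0^\eta$ follows from the fact that all constants in Theorem \ref{linearu} and Lemma \ref{regF} depend only on $\BCK$, $\kappa_t, \kappa_{\B{x}}$ and $\B{v}_\mathrm{e}$, which are fixed.
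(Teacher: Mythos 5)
Your proposal is correct and follows essentially the same route as the paper: form the difference $\B{u}_\Phi-\B{u}_\Psi$ of the abstract solutions (with zero initial datum since both controls agree at $t=0$), apply the linear estimates \eqref{contest1}, \eqref{contest3n}, \eqref{contest2} together with the Lipschitz bounds \eqref{LipF0}--\eqref{LipF2} on $\BF$ and Lemma \ref{regB}, and absorb the $T_c$-weighted terms by shrinking $T_c$. The only minor inaccuracy is the claim that the auxiliary datum $\B{u}_1$ in \eqref{contest3n} vanishes: since $\BB$ contains $D_t\BB_1$, the term $\BB(\Phi_1-\Phi_2)(0)$ retains a contribution from $D_t(\Phi_1-\Phi_2)(0)$, but this is harmlessly controlled by $\|\Phi_1-\Phi_2\|_{\BU}$ via the embedding \eqref{emb_12}, so the argument is unaffected.
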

\begin{proof}
	Let $\tilde \Phi, \tilde \Psi \in \BU_0^\eta$ and define $\Phi = \tilde \Phi + \Phi^\mathrm{e}$, $\Psi = \tilde \Psi + \Phi^\mathrm{e}$. By setting $\B{u}_{(\cdot)} := \BBS(\cdot) - \BB_1(\cdot)$ and in view of the construction of $\BBS$, it follows that $\B{u}_\Phi - \B{u}_\Psi$ is a classical solution of the Cauchy problem \begin{subnumcases}{\label{it1c}}
		{(\B{u}_\Phi - \B{u}_\Psi)}_t = (\BA+\BP)(\B{u}_\Phi - \B{u}_\Psi) + \BB(\Phi-\Psi) + \BF(\BBS(\Phi)) - \BF(\BBS(\Psi)), \label{it1ca} \\[2mm]
		(\B{u}_\Phi - \B{u}_\Psi)(0) =0. \label{it1cb} 
	\end{subnumcases} By \eqref{contest1} it follows that
	\begin{align*}
		\|\B{u}_\Phi - \B{u}_\Psi\|_{C(0,T_c);\BL^2(\Omega)} 
		&\leqslant e^{\gamma T_c} \int_0^{T_c} \left\|\BB(\Phi - \Psi)(\tau) + \BF(\BBS(\Phi))(\tau)-\BF(\BBS(\Psi))(\tau)\right\|_{\BL^2(\Omega)}d\tau \\
		& \lesssim \left\|\BB(\Phi - \Psi)\right\|_{L^1(0,T;\BL^2(\Omega))} + \int_0^{T_c}\left\|\BF(\BBS(\Phi))(\tau)-\BF(\BBS(\Psi))(\tau)\right\|_{\BL^2(\Omega)}d\tau.
	\end{align*}  From \eqref{LipF0} we obtain 
	\begin{align*}
		\|\B{u}_\Phi - \B{u}_\Psi\|_{C(0,T_c);\BL^2(\Omega)} 
		& \lesssim \left\|\BB(\Phi - \Psi)\right\|_{L^1(0,T;\BL^2(\Omega))} + T_c\left\|\BBS(\Phi)-\BBS(\Psi)\right\|_{C([0,T_c];\BL^2(\Omega))}.
	\end{align*}
	Lemma \ref{regB} then yields
	\begin{align}\label{LipL2}
		\|\BBS(\Phi) - \BBS(\Psi)\|_{C([0,T];\BL^2(\Omega))} 
		&\lesssim  \dfrac{1}{1-T_c}\|\Phi-\Psi\|_{\BU}.
	\end{align}
	Now, by \eqref{contest3n} it follows that
	\begin{align*}
		\|D_t(\B{u}_\Phi - \B{u}_\Psi)\|_{C(0,T_c);\BL^2(\Omega)} 
		&\leqslant e^{\gamma T_c} \int_0^{T_c} \left\|D_t\BB(\Phi - \Psi)(\tau) + D_t\BF(\BBS(\Phi))(\tau)-D_t\BF(\BBS(\Psi))(\tau)\right\|_{\BL^2(\Omega)}d\tau \\
		& \lesssim \left\|D_t\BB(\Phi - \Psi)\right\|_{L^1(0,T;\BL^2(\Omega))} + \int_0^{T_c}\left\|D_t\left[\BF(\BBS(\Phi))-\BF(\BBS(\Psi))\right](\tau)\right\|_{\BL^2(\Omega)}d\tau,
	\end{align*}  
	and then from \eqref{LipF2} we infer 
	\begin{align}\label{stlevell2}
		&\|D_t(\BBS(\Phi)-\BBS(\Psi))\|_{C(0,T_c;\BL^2(\Omega))} \nonumber \\
		& \lesssim \left\|\BB D_t(\Phi - \Psi)\right\|_{L^1(0,T;\BL^2(\Omega))} + \left\|\BB_t D_t(\Phi - \Psi)\right\|_{C([0,T];\BL^2(\Omega))} \nonumber \\
		&+ T_c\left[\left\|\BBS(\Phi)-\BBS(\Psi)\right\|_{C([0,T_c];\BH^1(\Omega))} + \left\|D_t(\BBS(\Phi)-\BBS(\Psi))\right\|_{C([0,T_c];\BL^2(\Omega))}\right].
	\end{align}
	Using \eqref{contest2} along with \eqref{LipL2} we have 
	\begin{align}\label{slevelh1}
		&\|\BBS(\Phi) - \BBS(\Psi)\|_{C(0,T_c;\BH^1(\Omega))} \nonumber \\
		& \lesssim \left\|\BB D_t(\Phi - \Psi)\right\|_{L^1(0,T;\BL^2(\Omega))} + \left\|\BB(\Phi - \Psi)\right\|_{C([0,T_c];\BL^2(\Omega))} \nonumber\\
		&+\left\|\BB(\Phi - \Psi)\right\|_{C([0,T];\BH^1(\Omega))}+ \dfrac{1}{1-T_c}\|\Phi-\Psi\|_{\BU} \nonumber \\
		&+ T_c\left[\left\|\BBS(\Phi)-\BBS(\Psi)\right\|_{C([0,T_c];\BH^1(\Omega))} + \left\|D_t(\BBS(\Phi)-\BBS(\Psi))\right\|_{C([0,T_c];\BL^2(\Omega))}\right].
	\end{align}
	Therefore, adding \eqref{LipL2}, \eqref{stlevell2} and \eqref{slevelh1} and using Lemma \ref{regB} where necessary, we have
	\begin{align}\label{LipXH1L2}
		\|\BBS(\Phi) - \BBS(\Psi)\|_{\BBX_{T_c}(\BH^1(\Omega),\BL^2(\Omega))}  
		&\lesssim  \dfrac{2-T_c}{(1-T_c)^2}\|\Phi-\Psi\|_{\BU} \lesssim \|\Phi-\Psi\|_\BU,
	\end{align}
	and the proof is finished by noticing that $\BBS(\Phi) = \BS(\tilde\Phi)$, $\BBS(\Psi) = \BS(\tilde\Psi)$ and $\Phi - \Psi = \tilde\Phi - \tilde\Psi.$
\end{proof} 
In all the other results presented in this section, we work with a (possibly smaller) $T_c$ such that $\BS: \BU_0^\eta \to \BBX_{T_c}^\BCK(\BH^1(\Omega),\BL^2(\Omega))$ is Lipschitz continuous. An immediate corollary of the previous lemma is the \emph{strong--to--strong} continuity of the map $\BS: \BU_0^\eta \to L^2(0,T_c; \BL^2(\Omega))$. The next \emph{essential} lemma shows that $\BS$ is weakly closed, i.e., given a sequence $(\Phi_n) \in \BU_0^\eta$  one has
\begin{equation}\label{weaklyclosed}
	\Phi_n \rightharpoonup \Phi \ \mbox{\rm in} \ \BU \ \mbox{\rm and} \ \BS(\Phi_n) \rightharpoonup \B{v} \ \mbox{\rm in} \ L^2(0,T_c; \BL^2(\Omega)) \Longrightarrow \Phi \in \BU_0^\eta \ \mbox{\rm and} \ \BS(\Phi) = \B{v}.
\end{equation}
In order to prove weak--closedness of $\BS$ we need to weaken the notion of solution for the semilinear Euler system. Below we introduce the notion of weak solutions for the semilinear abstract problem \eqref{abs1}. To that end, let $\mathfrak{X}_T^\BF \subset C([0,T];\BL^2(\Omega))$ be the \emph{largest} set with the following properties: 
\begin{itemize}
	\item[(a)] For all $\B{v} \in \mathfrak{X}_T^\BF$ we have that $\BF(\B{v})$ is well defined and $\BF(\B{v}) \in C([0,T];\BL^2(\Omega))$; and
	\item[(b)] the map $\BF$ is Lipschitz in $C([0,T];\BL^2(\Omega))$, i.e., there exists $L>0$ such that $$\| \BF(\B{v}_1)- \BF(\B{v}_2)\|_{C([0,T];\BL^2(\Omega))} \leqslant L\|\B{v}_1-\B{v}_2\|_{C([0,T];\BL^2(\Omega))}.$$
\end{itemize}
Of course, the set $\mathfrak{X}_T^\BF$ is nonempty. Indeed, it was proven in Lemma \ref{regF} that, for example, $$\BBX_T^\BCK(\BH^1(\Omega),\BL^2(\Omega)) \subset \mathfrak{X}_T^\BF$$ for any suitable set $\BCK$. 
\begin{definition}\label{weak}
	Let $\Phi \in \BU^{\mathrm{ad}}$. We say that $\B{v}: \Omega \times (0,T) \to \BR^{2m}$ is a {\bf weak solution} of the semilinear Euler system if 
	\begin{itemize}
		\item[\bf (i)] $\B{v} \in \mathfrak{X}_T^\BF$;
		\item[\bf (ii)] $(\B{v}(0,\cdot) - \BB_1\Phi(0),\B{z})_{\BL^2(\Omega)} = (\B{v}_\mathrm{e}-\BB_1\Phi^\mathrm{e},\B{z})_{\BL^2(\Omega)}$ for all $\B{z} \in \calD(\BA)$;
		\item[\bf (iii)] the function $t \mapsto (\B{v}(t,\cdot),\B{z})$ belongs to $[H^1(0,T)]^{2m}$ for all $\B{z} \in \calD(\BA)$; and
		\item[\bf (iv)] for all $\B{z} \in \calD(\BA)$ and $t \in [0,T]$ we have 
		\begin{equation}\label{weakform}
			\dfrac{d}{dt}(\B{u}(t,\cdot),\B{z})_{\BL^2(\Omega)} = (\B{u}(t,\cdot),(-\BA+\BP^\ast)\B{z})_{\BL^2(\Omega)} + (\BB\Phi(t),\B{z})_{\BL^2(\Omega)} + (\BF(\B{v})(t,\cdot),\B{z})_{\BL^2(\Omega)}
		\end{equation} where $\B{u} = \B{v} - \BB_1\Phi.$
	\end{itemize}
\end{definition}

It is not difficult to see that a classical solution is \emph{weak} and that a smooth \emph{weak} solution is classical, so the definition is consistent. Since we already know that for reasonably chosen $T_c$, local classical solution exists, it is clear that a weak solution also exists. When it comes to weak solutions, however, since we are making the solution space \emph{bigger}, we might lose \emph{uniqueness}, which is a key ingredient in the proof of weak--closedness of $\BS$. The set $\mathfrak{X}_T^\BF$ here represents a set with conditions under which weak solutions are well defined and \emph{unique}. We do not know whether it is the maximal set where weak solutions are unique, but it is big enough for our purposes.
\begin{theorem}
	The semilinear Euler system admits at {most} one weak solution in $\mathfrak{X}_T^\BF.$
\end{theorem}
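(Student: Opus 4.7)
The plan is to show that the difference of two weak solutions satisfies a linear abstract Cauchy problem that admits a mild (variation-of-constants) representation, and then to close the argument by Gr\"onwall using the Lipschitz property built into the definition of $\mathfrak{X}_T^\BF$.

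First I would fix two weak solutions $\B{v}_1, \B{v}_2 \in \mathfrak{X}_T^\BF$ corresponding to the same datum $\Phi \in \BU^{\mathrm{ad}}$ and set $\B{w}:=\B{v}_1-\B{v}_2=\B{u}_1-\B{u}_2$. Condition (ii) of Definition \ref{weak} gives $(\B{w}(0),\B{z})_{\BL^2(\Omega)}=0$ for all $\B{z}\in\calD(\BA)$; since $\BA$ is maximal dissipative (Theorem \ref{maxmon}), $\calD(\BA)$ is dense in $\BL^2(\Omega)$, so $\B{w}(0)=0$. Subtracting the two instances of (iv) yields, for every $\B{z}\in\calD(\BA)$ and every $t\in[0,T]$,
$$\frac{d}{dt}(\B{w}(t),\B{z})_{\BL^2(\Omega)}=(\B{w}(t),(-\BA+\BP^\ast)\B{z})_{\BL^2(\Omega)}+(\B{g}(t),\B{z})_{\BL^2(\Omega)},$$
where $\B{g}(t):=\BF(\B{v}_1)(t)-\BF(\B{v}_2)(t)$. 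By property (a) of $\mathfrak{X}_T^\BF$, $\B{g}\in C([0,T];\BL^2(\Omega))$.

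Next I would identify the generator's adjoint. Since $\BA$ is skew-adjoint by Corollary \ref{skew} and $\BP$ is bounded, $(\BA+\BP)^\ast=-\BA+\BP^\ast$ with $\calD((\BA+\BP)^\ast)=\calD(\BA)$. Hence the identity above is exactly the notion of \emph{weak solution}, in the sense of Ball (\emph{Proc. Amer. Math. Soc.} \textbf{63} (1977)), of the abstract inhomogeneous Cauchy problem
$$\B{w}_t=(\BA+\BP)\B{w}+\B{g},\qquad \B{w}(0)=0.$$
Ball's theorem then guarantees that $\B{w}$ coincides with the unique mild solution
$$\B{w}(t)=\int_0^t \BT(t-\tau)\,\B{g}(\tau)\,d\tau,$$
where $\{\BT(t)\}_{t\geqslant 0}$ is the $C_0$-semigroup generated by $\BA+\BP$ discussed after \eqref{semig}. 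If direct appeal to Ball's theorem is preferred to be avoided, the same representation can be obtained by testing (iv) with the time-dependent function $\B{z}(t)=\BT(T-t)^\ast\B{\phi}$ for arbitrary $\B{\phi}\in\calD(\BA)$, integrating in time, and using $\tfrac{d}{dt}\BT(T-t)^\ast\B{\phi}=-(\BA+\BP)^\ast\BT(T-t)^\ast\B{\phi}$ to collapse the boundary term.

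Finally, since $\tilde\BT(t)$ is an isometric $C_0$-group and $\BP$ is bounded, a standard perturbation bound gives $\|\BT(t)\|\leqslant e^{\|\BP\| t}$. Combined with the Lipschitz constant $L>0$ provided by property (b) of $\mathfrak{X}_T^\BF$, the mild formula yields
$$\|\B{w}(t)\|_{\BL^2(\Omega)}\leqslant L\,e^{\|\BP\|T}\int_0^t \|\B{w}(\tau)\|_{\BL^2(\Omega)}\,d\tau,$$
whence $\B{w}\equiv 0$ on $[0,T]$ by Gr\"onwall's inequality, i.e., $\B{v}_1\equiv\B{v}_2$. The only genuine obstacle is the step from the distributional identity (iv) to the mild representation; the rest is routine. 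Regularity would not suffice here because weak solutions need not take values in $\calD(\BA)$, so direct energy testing with $\B{w}(t)$ itself is unavailable, which is precisely why the semigroup/dual route is forced upon us.
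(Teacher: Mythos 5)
Your proposal is correct and follows essentially the same route as the paper: the paper also derives the mild representation $\B{w}(\tau)=\int_0^\tau\BT(\tau-t)(\BF(\B{v}_1)-\BF(\B{v}_2))(t)\,dt$ by testing the weak identity against $\lambda(\tau,t)=\BT^\ast(\tau-t)\B{z}$ (exactly your fallback dual-semigroup argument, rather than citing Ball's theorem), using density of $\calD(\BA)$, and then closes with the Lipschitz property of $\BF$ on $\mathfrak{X}_T^\BF$ and Gr\"onwall.
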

\begin{proof}
	Assume $\B{v}_1, \B{v}_2 \in \mathfrak{X}_T^\BF$ are both weak solutions of \eqref{abs1} associated to the control $\Phi \in \BU^\mathrm{ad}$. Of course they agree at $t = 0$. Define $\B{w} \coloneqq \B{v}_1 - \B{v}_2$ and notice that, by definition, $\B{w}$ is such that for all $\B{z} \in \calD(\BA)$ and $t \in [0,T]$ we have 
	\begin{equation}\label{weakform2}
		\dfrac{d}{dt}(\B{w}(t,\cdot),\B{z})_{\BL^2(\Omega)} = (\B{w}(t,\cdot),(-\BA+\BP^\ast)\B{z})_{\BL^2(\Omega)} + (\BF(\B{v}_1)(t,\cdot)-\BF(\B{v}_2)(t,\cdot),\B{z})_{\BL^2(\Omega)}.
	\end{equation} Let $0 <\tau \leqslant T$ and define $\lambda(\tau,t) \coloneqq \BT^\ast(\tau-t)\B{z}$ for $0 \leqslant t \leqslant \tau$ where $\BT^\ast$ is the dual semigroup, generated by $(\BA+\BP)^\ast = -\BA + \BP^\ast.$ It follows by semigroup theory that
	\begin{align*}
		\dfrac{d}{dt}(\lambda(\tau,t),\B{w})_{\BL^2(\Omega)} 
		&= (-(\BA+\BP)^\ast\lambda(\tau,t),\B{w})_{\BL^2(\Omega)} + (\lambda(\tau,t),(\BA+\BP)\B{w} + \BF(\B{v}_1)-\BF(\B{v}_2))_{\BL^2(\Omega)} \\
		&=(\lambda(\tau,t),\BF(\B{v}_1)-\BF(\B{v}_2))_{\BL^2(\Omega)}.
	\end{align*} 
	Then, after integrating in time on $(0,\tau)$ and recalling that $\B{w}(0) = 0$ we have
	\begin{align*}
		(\lambda(\tau,\tau),\B{w})_{\BL^2(\Omega)} =  \int_0^\tau(\B{z},\BT(\tau-t)(\BF(\B{v}_1)-\BF(\B{v}_2))(t))_{\BL^2(\Omega)}dt,
	\end{align*}
	which simplifies to
	\begin{align*}
		(\B{z},\B{w})_{\BL^2(\Omega)} =  \left(\B{z},\int_0^\tau\BT(\tau-t)(\BF(\B{v}_1)-\BF(\B{v}_2))(t)dt\right)_{\BL^2(\Omega)},
	\end{align*}
	and since $\calD(\BA)$ is dense in $\BL^2(\Omega)$ it follows that $$\B{w} = \int_0^\tau\BT(\tau-t)(\BF(\B{v}_1)-\BF(\B{v}_2))(t)dt.$$
	From this we estimate
	\begin{align*}
		\|\B{v}_1(\tau,\cdot)-\B{v}_2(\tau,\cdot)\|_{\BL^2(\Omega)} \leqslant CL \int_0^\tau e^{\gamma (\tau-t)}\|\B{v}_1(t,\cdot)-\B{v}_2(t,\cdot)\|_{\BL^2(\Omega)}dt,
	\end{align*}
	whereby the result follows from Gr\"ownwall's inequality.
\end{proof}
We are now ready to show weak--closedness of $\BS.$
\begin{lemma}
	The map $\BS: \BU_0^\eta \to L^2(0,T_c; \BL^2(\Omega))$ is weakly closed.
\end{lemma}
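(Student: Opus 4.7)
The plan is to combine three ingredients: (a) the weak closedness of $\BU_0^\eta$; (b) the uniform $\BBX_{T_c}$-bound on $\{\BS(\Phi_n)\}$ coming from Lemma \ref{Lip}, which together with the compact embedding \eqref{embed} yields strong subsequential convergence of the states in $\BV_{T_c}$; and (c) the uniqueness of weak solutions in $\mathfrak{X}_{T_c}^\BF$ just established. The key point is that weak $L^2$-convergence alone cannot handle the nonlinearity, so I upgrade it to strong convergence in a space where $\BF$ is Lipschitz.

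First I would verify that $\Phi \in \BU_0^\eta$. Since $H^2(0,T) \hookrightarrow C([0,T])$ compactly, the weak convergence $\Phi_n \rightharpoonup \Phi$ in $\BU$ implies strong convergence $\Phi_n \to \Phi$ in $[C([0,T])]^{2m}$; hence the pointwise condition $\Phi(0)=0$ and the bound $\|\Phi\|_{[C([0,T])]^{2m}}\leqslant\kappa_\BU$ pass to the limit, while the bound $\|\Phi\|_\BU\leqslant\eta$ follows from weak lower semicontinuity of the norm.

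Next, Lemma \ref{Lip} shows $\{\BS(\Phi_n)\}$ is bounded in $\BBX_{T_c}(\BH^1(\Omega),\BL^2(\Omega))$. The compact embedding \eqref{embed} delivers a (non-relabeled) subsequence and some $\tilde \B{v}$ with $\BS(\Phi_n) \to \tilde \B{v}$ strongly in $\BV_{T_c}$, hence strongly in $L^2(0,T_c;\BL^2(\Omega))$. Uniqueness of limits forces $\tilde\B{v}=\B{v}$, so $\B{v}\in\BV_{T_c}$. Since each $\BS(\Phi_n)$ takes values in the closed set $\BCK$, so does $\B{v}$, placing $\B{v}$ inside $\mathfrak{X}_{T_c}^\BF$ (by the inclusion noted after the definition of $\mathfrak{X}_{T_c}^\BF$).

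I would then pass to the limit in the weak formulation. Writing $\tilde\Phi_n := \Phi_n + \Phi^\mathrm{e}$, $\tilde\Phi := \Phi + \Phi^\mathrm{e}$ and $\B{u}_n := \BS(\Phi_n) - \BB_1\tilde\Phi_n$, integration of Definition \ref{weak}(iv) in time yields, for $\B{z}\in\calD(\BA)$ and $t\in[0,T_c]$,
\begin{align*}
(\B{u}_n(t),\B{z}) - (\B{u}_n(0),\B{z}) &= \int_0^t (\B{u}_n(s),(-\BA+\BP^\ast)\B{z})\,ds \\
&\quad + \int_0^t(\BB\tilde\Phi_n(s),\B{z})\,ds + \int_0^t(\BF(\BS(\Phi_n))(s),\B{z})\,ds,
\end{align*}
with $\B{u}_n(0)=\B{v}_\mathrm{e}-\BB_1\Phi^\mathrm{e}$ independent of $n$. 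Strong convergence of $\tilde\Phi_n$ in $C([0,T_c])^{2m}$ together with $\BS(\Phi_n)\to\B{v}$ in $\BV_{T_c}$ yields $\B{u}_n \to \B{u} := \B{v} - \BB_1\tilde\Phi$ in $C([0,T_c];\BL^2(\Omega))$, handling the first two terms and the initial datum. Weak convergence $\Phi_n\rightharpoonup\Phi$ in $\BU$ combined with $\BB \in \calL(\BU;H^1(0,T;\BL^2))$ from Lemma \ref{regB} gives $\BB\tilde\Phi_n \rightharpoonup \BB\tilde\Phi$ in $L^2(0,T_c;\BL^2)$, sufficient for the $\BB$-integral. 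For the nonlinear term, the Lipschitz property of $\BF$ on $\mathfrak{X}_{T_c}^\BF$ (recall \eqref{LipF0}) together with strong $\BV_{T_c}$-convergence yields $\BF(\BS(\Phi_n))\to\BF(\B{v})$ in $C([0,T_c];\BL^2)$. Passing to the limit shows that $\B{v}$ is a weak solution corresponding to the control $\tilde\Phi$; uniqueness of weak solutions in $\mathfrak{X}_{T_c}^\BF$ then forces $\B{v}=\BS(\Phi)$.

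The main obstacle is the nonlinear term, which contains $1/p$ and $q|q|$ and is therefore not weakly continuous in $L^2$; it has to be neutralized by the strong convergence in $\BV_{T_c}$ gifted by \eqref{embed}, while simultaneously ensuring the limit stays inside the "safe" set $\BCK$ so that the Lipschitz estimate of Lemma \ref{regF} can be invoked and uniqueness can close the argument.
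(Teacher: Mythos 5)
Your proof is correct and follows essentially the same route as the paper: uniform boundedness of $(\BS(\Phi_n))$ in $\BBX_{T_c}$, an Aubin--Lions compactness argument to upgrade the weak $L^2$ convergence to strong convergence (so that the nonlinearity $\BF$ can be passed to the limit via the Lipschitz estimate \eqref{LipF0}), limit passage in the weak formulation, and identification of the limit with $\BS(\Phi)$ by uniqueness of weak solutions in $\mathfrak{X}_{T_c}^\BF$. The only (harmless) difference is that you invoke the compact embedding \eqref{embed} to get uniform convergence in $\BV_{T_c}$, whereas the paper extracts a weak limit in $\BBY_{T_c}^2$ and uses the compact embedding into $L^2(0,T_c;\BL^2(\Omega))$; your stronger convergence in fact makes the transfer of the $\BCK$-bounds to the limit cleaner.
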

\begin{proof}
	Let $(\Phi_n)$ be a sequence in $\BU_0^\eta$ such that $\Phi_n \rightharpoonup \Phi $ in $\BU$ and $\BS(\Phi_n) \rightharpoonup \B{v}$ in $L^2(0,T_c;\BL^2(\Omega)).$ It follows from weak--closedness of $\BU_0^\eta$ that $\Phi \in \BU_0^\eta$. Let $\tilde \Phi_n = \Phi_n + \Phi^\mathrm{e}$ and $\tilde \Phi = \Phi + \Phi^\mathrm{e}.$
	
	The corresponding subsequence $(\BS({\Phi_n}))$ is uniformly bounded in $\BBX_{T_c}(\BH^1(\Omega),\BL^2(\Omega))$ and is such that $\B{u}_{\tilde\Phi_n} \coloneqq \BBS(\tilde\Phi_n) - \BB_1(\tilde\Phi_n)$ solves \eqref{abs1} with control $\tilde \Phi_n$. Recall the embeddings 
	\begin{equation}\label{embrec}
		\BBX_{T_c}(\BH^1(\Omega),\BL^2(\Omega)) \hookrightarrow  \BBY_{T_c}^2(\BH^1(\Omega),\BL^2(\Omega))\hookrightarrow L^2(0,T_c;\BL^2(\Omega)).
	\end{equation}
	It follows from the first embedding in \eqref{embrec} that the subsequence $(\BS(\Phi_n))$ is also uniformly bounded in $\BBY_{T_c}^2(\BH^1(\Omega),\BL^2(\Omega))$ and since this is a Hilbert space, by passing (if necessary) on to a further subsequence, we can assume that there exists $\B{w}$ such that $\BS(\Phi_n) \rightharpoonup \B{w}$ in $\BBY_{T_c}^2(\BH^1(\Omega),\BL^2(\Omega))$. By the second embedding in \eqref{embrec} and by the uniqueness of the weak limit, we have $\B{w} = \B{v}.$
	
	We now show that $\B{v}$ is a weak solution of \eqref{abs1}. From the regularity of $\BBY_{T_c}^2(\BH^1(\Omega),\BL^2(\Omega))$, we only need to justify the passing to the limit in \eqref{weakform} for $\B{u}_n \coloneqq \BBS(\tilde\Phi_n) - \BB_1(\tilde\Phi_n)$, that is, 
	\begin{equation}\label{weakform3}
		\dfrac{d}{dt}(\B{u}_n(t,\cdot),\B{z})_{\BL^2(\Omega)} = (\B{u}_n(t,\cdot),(-\BA+\BP^\ast)\B{z})_{\BL^2(\Omega)} + (\BB\tilde\Phi_n(t),\B{z})_{\BL^2(\Omega)} + (\BF(\BBS(\tilde\Phi_n))(t,\cdot),\B{z})_{\BL^2(\Omega)}.
	\end{equation}
	It follows from the compactness of the embedding $H^2(0,T) \hookrightarrow H^1(0,T)$ that there is a (non--relabeled) subsequence $({\Phi_n})$ such that ${\Phi_n} \to \Phi$ in $[H^1(0,T_c)]^{2m}$ as $n \to \infty$. As a result $\BB_1\tilde\Phi_n \to \BB_1\tilde\Phi$ in $H^1(0,T;\BL^2(\Omega))$ and hence the linear terms above provide no problems. Hence it suffices to show that passing to the limit is justified for the nonlinear term. We are going to use the details given in proving \eqref{LipF0} in Lemma \ref{regF}. Again here we only address the case $m=1$. To that end we denote $\BBS(\tilde\Phi_n) = (u_1^n, v_1^n)$ and $\B{v} \in \BBY_{T_c}^2(\BH^1(\Omega),\BL^2(\Omega))$ as $\B{v} = (u,v).$ We then have (omitting the obvious details and sometimes the argument $(t,x)$)
	\begin{align*} 
		&
		\left|\left(\BF(\BBS(\tilde\Phi_n))(t,\cdot) - \BF(\B{v})(t,\cdot),\B{z}\right)_{\BL^2(\Omega)}\right|^2 
		\leqslant \left\|\BF(\BBS(\tilde\Phi_n))(t,\cdot) - \BF(\B{v})(t,\cdot)\right\|_{{\BL^2(\Omega)}}^2\|\B{z}\|_{\BL^2(\Omega)}^2 \\
		&=  D^2c^2\beta^2\left\|\frac{{u_1^n}|{u_1^n}|}{{v_1^n}}-\frac{{u}|{u}|}{{v}}\right\|_{L^2(\Omega)}^2 \|\B{z}\|_{\BL^2(\Omega)} \\
		&\lesssim \left[\int_0^{L }\big|{u_1^n} |{u_1^n} |({v} -{v_1^n} ) + ({u_1^n} (|{u_1^n} |-|{u} |)+({u_1^n} -{u} )|{u} |){v_1^n} \big|^2dx\right]\|\B{z}\|_{\BL^2(\Omega)}^2  \nonumber\\ 
		& \lesssim \left[\|{u_1^n} \|_{L^\infty(\Omega )}^4\|{v} -{v_1^n} \|_{L^2(\Omega )}^2 
		+ (\|{u_1^n} \|_{L^\infty(\Omega )}^2+\|{u} \|_{L^\infty(\Omega )}^2)\|{v_1^n} \|_{L^\infty(\Omega )}^2\|{u_1^n} -{u} \|_{L^2(\Omega )}^2\right]\|\B{z}\|_{\BL^2(\Omega)}^2  \nonumber\\ 
		& \lesssim \left[\|{v}(t,\cdot) -{v_1^n}(t,\cdot)\|_{L^2(\Omega )}^2 + \|{u_1^n}(t,\cdot) -{u}(t,\cdot)\|_{L^2(\Omega )}^2\right]\|\B{z}\|_{\BL^2(\Omega)}^2 \\
		&= \left\|\BBS(\tilde\Phi)(t,\cdot) - \B{v}(t,\cdot)\right\|_{\BL^2(\Omega)}^2\|\B{z}\|_{\BL^2(\Omega)}^2 \to 0
	\end{align*} 
	as $n \to \infty$ since the second embedding in \eqref{embrec} is compact.
	
	Now since the bounds of $\BBX_{T_c}^\BCK(\BH^1(\Omega),\BL^2(\Omega))$ on $\BS(\Phi_n)$ transfer to $\B{v}$ in $\BBY_{T_c}^2(\BH^1(\Omega),\BL^2(\Omega))$, we have that $\B{v}$ is a weak solution of \eqref{abs1} in $\mathfrak{X}_{T_c}^\BF$. But so is $\BS(\Phi)$. Therefore, $\BS(\Phi) = \B{v}$ by the uniqueness of weak solutions.
\end{proof}
In the next proposition we achieve our main goal, namely showing that $\BS: \BU_0^\eta \to L^2(0,T:\BL^2(\Omega))$ is weak--to--strong continuous. For this purpose, we need the following \emph{classical} lemma. 
\begin{lemma}[\bf Uryson's subsequence principle, \cite{T}]
	Let $(x_n)$ be a sequence in a topological space $X$, and let $x$ be another point in $X$. Then, the following are equivalent:
	\begin{itemize}
		\item[\bf (i)] $x_n \to x$ in $X.$ 
		\item[\bf (ii)] Every subsequence of $(x_n)$ has a further subsequence that converges to $x.$
	\end{itemize}
\end{lemma}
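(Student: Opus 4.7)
The plan is to prove the equivalence by establishing each implication separately, with the nontrivial direction handled via contrapositive. The forward direction $\mathbf{(i)} \Rightarrow \mathbf{(ii)}$ is essentially tautological: if the full sequence converges to $x$, then any subsequence $(x_{n_k})$ also converges to $x$, and hence trivially has a further subsequence (itself) converging to $x$. No extra argument is needed.

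For the reverse direction $\mathbf{(ii)} \Rightarrow \mathbf{(i)}$ I would argue by contradiction. Assume that $\mathbf{(ii)}$ holds but $x_n \not\to x$ in $X$. By negating the definition of convergence, there exists an open neighborhood $U$ of $x$ such that the set $\{n : x_n \notin U\}$ is infinite. Enumerating this set in increasing order yields a subsequence $(x_{n_k})$ entirely contained in the closed set $F \coloneqq X \setminus U$. By hypothesis $\mathbf{(ii)}$, this subsequence admits a further subsequence $(x_{n_{k_j}})$ with $x_{n_{k_j}} \to x$. However, since $F$ is closed and contains all terms $x_{n_{k_j}}$, the limit must lie in $F$, i.e., $x \in X \setminus U$. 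This contradicts $x \in U$, completing the argument.

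The only subtlety, which is essentially no obstacle at all, is verifying that the limit of a convergent sequence lying in a closed set stays in that closed set; this holds in any topological space by the definition of closedness via complements of open sets (if $x \notin F$, then $X \setminus F$ would be an open neighborhood of $x$ eventually containing all terms of the subsequence). Since the lemma is classical and the application in the subsequent proposition is to the normed (hence first-countable, metrizable) setting of $L^2(0,T;\BL^2(\Omega))$ equipped with its strong topology, no pathological topological issues arise. Thus the proof is short and self-contained, and I would likely present it in a single compact paragraph in the paper or simply cite the reference \cite{T}.
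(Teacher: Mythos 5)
Your proof is correct, and the paper itself offers no proof of this lemma at all --- it is stated as a classical fact and attributed to the reference \cite{T} --- so there is nothing in the paper to compare against. Your argument is the standard one: the forward implication is immediate, and the reverse implication follows by extracting from the "bad" indices a subsequence avoiding a fixed neighborhood $U$ of $x$ and observing that no further subsequence of it can converge to $x$. One small simplification: you do not actually need the closed-set step. Once the further subsequence $(x_{n_{k_j}})$ converges to $x$, the definition of convergence forces its terms to lie eventually in the open neighborhood $U$, which directly contradicts the fact that every term of $(x_{n_k})$ was chosen outside $U$; invoking sequential closedness of $X\setminus U$ is a correct but slightly longer route to the same contradiction. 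Either way the argument is valid in an arbitrary topological space, so your remark about the metrizable setting of the application, while true, is not needed for the lemma itself.
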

\begin{proposition}
	The map $\BS: \BU_0^\eta \to L^2(0,T; \BL^2(\Omega))$ is weak--to--strong continuous.
\end{proposition}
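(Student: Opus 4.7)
The plan is to combine Uryson's subsequence principle with the weak--closedness of $\BS$ just established, using the compact embedding \eqref{embed2} to extract strong convergence along subsequences.

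Let $(\Phi_n) \subset \BU_0^\eta$ with $\Phi_n \rightharpoonup \Phi$ in $\BU$. By the weak sequential closedness of $\BU_0^\eta$, we have $\Phi \in \BU_0^\eta$. By Uryson's lemma, it suffices to show that every subsequence of $(\BS(\Phi_n))$ admits a further subsequence converging strongly to $\BS(\Phi)$ in $L^2(0,T_c;\BL^2(\Omega))$. Fix an arbitrary subsequence, which for notational simplicity we still denote by $(\BS(\Phi_n))$.

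Since $(\Phi_n)$ is weakly convergent in $\BU$, it is bounded in $\BU$, and $\|\Phi_n\|_{[C([0,T])]^{2m}}\leqslant \kappa_\BU$ uniformly. From the construction of $\BS$ in Section \ref{localwell}, together with the Lipschitz estimate \eqref{LipXH1L2} of Lemma \ref{Lip} applied between $\Phi_n$ and $0$, the sequence $(\BS(\Phi_n))$ is uniformly bounded in $\BBX_{T_c}(\BH^1(\Omega),\BL^2(\Omega))$. By the chain of embeddings \eqref{embed2},
\[
\BBX_{T_c}(\BH^1(\Omega),\BL^2(\Omega)) \hookrightarrow \BBY_{T_c}^2(\BH^1(\Omega),\BL^2(\Omega)) \hookrightarrow L^2(0,T_c;[C(\overline\Omega)]^2),
\]
the second of which is compact, we can extract a (non-relabeled) further subsequence and some $\B{v}\in L^2(0,T_c;\BL^2(\Omega))$ with $\BS(\Phi_n)\to \B{v}$ strongly in $L^2(0,T_c;[C(\overline\Omega)]^2)$, hence also strongly (and therefore weakly) in $L^2(0,T_c;\BL^2(\Omega))$.

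Now apply the weak closedness property \eqref{weaklyclosed} established in the previous lemma: since $\Phi_n \rightharpoonup \Phi$ in $\BU$ and $\BS(\Phi_n)\rightharpoonup \B{v}$ in $L^2(0,T_c;\BL^2(\Omega))$, we conclude $\BS(\Phi) = \B{v}$. Thus the extracted further subsequence converges strongly to $\BS(\Phi)$ in $L^2(0,T_c;\BL^2(\Omega))$. Since the original subsequence was arbitrary, Uryson's principle yields the strong convergence $\BS(\Phi_n)\to \BS(\Phi)$ in $L^2(0,T_c;\BL^2(\Omega))$ of the entire sequence, which is the desired weak--to--strong continuity. The one delicate point, already dispensed with in the preceding lemma, is the identification of the limit: it is precisely the uniqueness of weak solutions in $\mathfrak{X}_{T_c}^\BF$ that guarantees $\B{v}=\BS(\Phi)$; without it, the Uryson argument would only yield convergence of a subsequence.
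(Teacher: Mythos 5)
Your proposal is correct and follows essentially the same route as the paper: uniform boundedness of $(\BS(\Phi_n))$, extraction of a strongly convergent further subsequence via the compact Aubin--Lions embedding, identification of the limit with $\BS(\Phi)$ through the weak--closedness lemma (which rests on uniqueness of weak solutions in $\mathfrak{X}_{T_c}^\BF$), and Uryson's subsequence principle to upgrade to the whole sequence. The only cosmetic difference is that the paper first passes to a weak limit in the Hilbert space $\BBY_{T_c}^2(\BH^1(\Omega),\BL^2(\Omega))$ before invoking compactness, whereas you apply the compact embedding directly to the bounded subsequence; both are valid.
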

\begin{proof}
	Let $(\Phi_n)$ be a sequence in $\BU_0^\eta$ such that $\Phi_n \rightharpoonup \Phi$ in $\BU$. From weak closedness of $\BU_0^\eta$ it follows that $\Phi \in \BU_0^\eta$. We now claim that the sequence $(\BS(\Phi_n))$ converges to $\BS(\Phi)$ \emph{strongly} in $L^2(0,T;\BL^2(\Omega)).$
	
	Indeed, we know that $(\BS(\Phi_n))$ is uniformly bounded in $\BBY_{T_c}^2(\BH^1(\Omega),\BL^2(\Omega))$. Let $(\BS(\Phi_{n_k}))$ be a -- also uniformly bounded -- subsequence of $(\BS(\Phi_n))$. Then, there exists a further subsequence $(\BS(\Phi_{n_{k_j}}))$ which converges weakly to some $\B{v}$ in $\BBY_{T_c}^2(\BH^1(\Omega),\BL^2(\Omega))$. Hence, by compactness of the embedding $\BBY_{T_c}^2(\BH^1(\Omega),\BL^2(\Omega)) \hookrightarrow L^2(0,T;\BL^2(\Omega))$ there exists a further subsequence $(\BS(\Phi_{n_{k_{j_i}}}))$ that converges strongly to $\B{v}$ in $L^2(0,T;\BL^2(\Omega)).$ Now, weak closedness of $\BS$ guarantees that $\B{v} = \BS(\Phi)$. Therefore, Uryson's subsequence principle can be applied to yield $\BS(\Phi_n) \to \BS(\Phi)$ in $L^2(0,T;\BL^2(\Omega)).$
\end{proof}

\begin{remark}
	Notice that in the proof of the previous corollary the weak closedness of the map $\BS$ is used to tie the limit of any weakly convergent subsequence of $(\BS(\Phi_n))$ -- that in principle could converge to any function in $L^2(0,T;\BL^2(\Omega))$ -- to $\BS(\Phi).$
\end{remark}

\subsection{Adjoint states and differentiability of the control--to--state map}\label{adj_state_diff}

The following result is used to derive a formula for the gradient of $J.$ It follows by \cite[Theorem 1.2, p. 184]{P}.

\begin{proposition}\label{green22}
	Let $\B{f}, \B{g}: [0,T] \times \BL^2(\Omega) \to \BL^2(\Omega)$ be both continuous in $t$ and uniformly Lipschitsz continuous (w.r.t the second variable) in $\BL^2(\Omega)$. Then the abstract Cauchy problems 
	\begin{subnumcases}{\label{dder}}
		\B{w}_t = (\BA+\BP)\B{w}+\B{f}(t,\B{w}),\label{dder1} \\[2mm]
		\B{w}(0) = 0,\label{dder2} 
	\end{subnumcases} 
	and 
	\begin{subnumcases}{\label{dder3}}
		-\B{p}_t = (-\BA+\BP^\ast)\B{p}+\B{g}(t,\B{p}), \label{dder4} \\[2mm]
		\B{p}(T) = 0,\label{dder5} 
	\end{subnumcases} 
	have unique solutions $\B{w},\B{p} \in C([0,T],\BL^2(\Omega))$, respectively. Moreover, the following {Green}-type formula holds:
	\begin{equation}
		\label{green222}\int_0^T(\B{g}(t,\B{p}(t)),\B{w}(t))_{\BL^2(\Omega)}dt=  \int_0^T(\B{p}(t),\B{f}(t,\B{w}(t)))_{\BL^2(\Omega)}dt
	\end{equation}
\end{proposition}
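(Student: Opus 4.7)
The plan is to split the argument into three stages: (i) well-posedness of the forward problem \eqref{dder}, (ii) well-posedness of the backward problem \eqref{dder3} via time-reversal, and (iii) derivation of the Green-type identity \eqref{green222} by differentiating the pairing $(\B{p}(t),\B{w}(t))_{\BL^2(\Omega)}$ and exploiting the skew-adjointness of $\BA$ obtained in Corollary \ref{skew}.

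For stage (i), I would recall that, by Theorem \ref{gen}, $\BA$ generates a $C_0$-group of isometries on $\BL^2(\Omega)$, and since $\BP \in \calL(\BL^2(\Omega))$ is a bounded perturbation, $\BA+\BP$ generates a $C_0$-semigroup. Together with the continuity and uniform Lipschitz assumptions on $\B{f}(t,\cdot)$, the standard semilinear existence and uniqueness theorem \cite[Theorem 1.2, p. 184]{P}, already cited in the statement, then produces a unique mild solution $\B{w}\in C([0,T];\BL^2(\Omega))$.

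For stage (ii), I would perform the change of variable $\tilde{\B{p}}(t) \coloneqq \B{p}(T-t)$ and $\tilde{\B{g}}(t,\B{z}) \coloneqq \B{g}(T-t,\B{z})$. The backward problem \eqref{dder3} then becomes the forward semilinear Cauchy problem $\tilde{\B{p}}_t = (-\BA+\BP^\ast)\tilde{\B{p}}+\tilde{\B{g}}(t,\tilde{\B{p}})$ with $\tilde{\B{p}}(0)=0$. Since $-\BA$ also generates a $C_0$-group (being the generator of the dual group, by Corollary \ref{skew} applied to $\BA^\ast=-\BA$) and $\BP^\ast$ is bounded, the same reference yields a unique mild solution $\tilde{\B{p}}\in C([0,T];\BL^2(\Omega))$, and therefore $\B{p}\in C([0,T];\BL^2(\Omega))$ as required.

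For stage (iii), I would first assume that $\B{w}$ and $\B{p}$ are classical solutions, i.e., that they are differentiable with values in $\calD(\BA)$ and satisfy the equations pointwise in time. Then one computes, for $t\in(0,T)$,
\begin{align*}
\frac{d}{dt}(\B{p}(t),\B{w}(t))_{\BL^2(\Omega)}
&= ((-\BA+\BP^\ast)\B{p}+\B{g}(t,\B{p}),\B{w})_{\BL^2(\Omega)} \\
&\quad - (\B{p},(\BA+\BP)\B{w}+\B{f}(t,\B{w}))_{\BL^2(\Omega)}\cdot(-1) \\
&= -(\BA\B{p},\B{w}) - (\B{p},\BA\B{w}) + (\B{g}(t,\B{p}),\B{w}) - (\B{p},\B{f}(t,\B{w})),
\end{align*}
where the $\BP$-terms cancel by the definition of the adjoint, and the $\BA$-terms cancel by Corollary \ref{skew} (i.e., $(\BA\B{p},\B{w})=-(\B{p},\BA\B{w})$ on $\calD(\BA)$). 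Integrating on $(0,T)$ and using $\B{w}(0)=0$ and $\B{p}(T)=0$ yields \eqref{green222}. The main obstacle I anticipate is that the solutions produced in (i)--(ii) are only mild, so the pointwise computation above is not directly justified. I would overcome this by a density/approximation argument: approximate the data by sequences $(\B{f}_n,\B{g}_n)$ of smoother nonlinearities (e.g.\ $C^1$ in $t$ with values in $\calD(\BA)$ if needed, or through a Yosida regularization $\BA_n \coloneqq n\BA(nI-\BA)^{-1}$ combined with mollification of $\B{f},\B{g}$) for which the corresponding solutions $\B{w}_n,\B{p}_n$ are classical. The identity \eqref{green222} then holds for $(\B{w}_n,\B{p}_n)$, and by Lipschitz stability of both problems in $C([0,T];\BL^2(\Omega))$ combined with Grönwall's inequality, we can pass to the limit $n\to\infty$ and recover \eqref{green222} in its full generality.
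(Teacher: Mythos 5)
Your outline is correct and, in substance, it is the argument the paper leaves implicit: the paper offers no proof body for Proposition \ref{green22} beyond the citation of \cite[Theorem 1.2, p.\ 184]{P}, and your stages (i)--(ii) (semilinear existence/uniqueness for the forward problem; time reversal plus the fact that $-\BA+\BP^\ast$ generates a $C_0$-semigroup for the backward one) are exactly what that citation delivers. Your stage (iii) computation is also the right idea (modulo a sign slip in your first displayed line: the term $((-\BA+\BP^\ast)\B{p}+\B{g}(t,\B{p}),\B{w})$ equals $(-\B{p}_t,\B{w})$, so it should carry a minus sign; your final line and conclusion are nevertheless consistent with the correct bookkeeping, which gives $\tfrac{d}{dt}(\B{p},\B{w})=-(\B{g}(t,\B{p}),\B{w})+(\B{p},\B{f}(t,\B{w}))$).

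The one genuinely soft spot is the closing approximation argument. The solutions from \cite[Theorem 1.2, p.\ 184]{P} are only mild, and the two regularization routes you propose are harder than you acknowledge: classical-solution theorems for semilinear problems (e.g.\ \cite[Theorem 6.1.5]{P}) require the nonlinearity to be $C^1$ in \emph{both} variables, and mollifying $\B{f},\B{g}$ in the infinite-dimensional state variable $\BL^2(\Omega)$ is not a routine operation; the Yosida route is also delicate because $\BA_n\coloneqq n\BA(nI-\BA)^{-1}$ is no longer exactly skew-adjoint (its adjoint is $-n\BA(nI+\BA)^{-1}$), so the cancellation $(\BA_n\B{p},\B{w})+(\B{p},\BA_n\B{w})=0$ fails and one must track a vanishing error term in both equations simultaneously. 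A cleaner way to close the argument, requiring no classical-solution detour at all, is to insert the Duhamel representations $\B{w}(t)=\int_0^t\BT(t-s)\B{f}(s,\B{w}(s))\,ds$ and $\B{p}(t)=\int_t^T\BT^\ast(s-t)\B{g}(s,\B{p}(s))\,ds$ (the latter from your time-reversal step, with $\BT^\ast$ the dual semigroup generated by $-\BA+\BP^\ast$) directly into the left-hand side of \eqref{green222}, move $\BT(t-s)$ to the other slot of the inner product as $\BT^\ast(t-s)$, and apply Fubini; the inner integral reassembles into $\B{p}(s)$ and \eqref{green222} follows at the level of mild solutions.
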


For $\Phi \in \BU^{\mathrm{ad}}$ and $\BCK$ a suitable compact set, denote $\BBS(\Phi) = ((p^k(\Phi),q^k(\Phi))\kk{k} \in \BBX_{T_c}^\BCK(\BH^1(\Omega),\BL^2(\Omega))$ and define 
$$\calF^k(\Phi) \coloneqq 
\begin{bmatrix}
	0 & 0 \\ 
	\beta_k \dfrac{q^k(\Phi)|q^k(\Phi)|}{p^k(\Phi)^2} & -2\beta_k \dfrac{|q^k(\Phi)|}{p^k(\Phi)},
\end{bmatrix}$$ 
as well as $\BFF(\Phi) \coloneqq (\calF^k(\Phi))\kk{k}.$ A tedious but straighforward computation yields that $$\BFF(\Phi) \in \calL\left(\BV_T\right) \cap \calL\left(\BL^2(\Omega)\right)$$, and the inequalities 
\begin{equation}\label{Fineq} 
	\|\BFF(\Phi)\|_{\calL\left(\BV_T\right)} +\|\BFF(\Phi)\|_{\calL\left(\BL^2(\Omega)\right)} \leqslant C_\BCK\|\BS(\Phi)\|_{\BBX_{T_c}(\BH^1(\Omega),\BL^2(\Omega))},
\end{equation}
and 
\begin{equation}\label{Fineq2}
	\|\BFF(\Phi)-\BFF(\Psi)\|_{\calL\left(\BV_T\right)}+\|\BFF(\Phi)-\BFF(\Psi)\|_{\calL\left(\BL^2(\Omega)\right)} \leqslant C_\BCK\|\Phi-\Psi\|_{\BU} 
\end{equation}
hold for all $\Phi,\Psi \in \BU^{\mathrm{ad}}.$ 

Now, for a given $\Phi \in \BU_0^\eta$ and $\B{h} \in \BU_0$, we let $\tilde \Phi \coloneqq \Phi + \Phi^\mathrm{e}$ and define the affine map $\B{f}_\Phi(\B{h}): [0,T_c] \times \BL^2(\Omega) \to \BL^2(\Omega)$ by $\B{f}_\Phi(\B{h})(t,\B{w}) \coloneqq \BFF(\tilde \Phi)\B{w} + (\BB + \BFF(\tilde \Phi)\BB_1)\B{h}$. We notice that by the regularity of the operators involved in its definition we have that $\B{f}_\Phi(\B{h})$ is continuous in the first variable and (uniformly) Lipschitz continuous w.r.t the second variable. It then follows from Proposition \ref{green22} that there exists a unique solution $\B{w} \coloneqq \B{w}_\Phi(\B{h}) \in C([0,T_c];\BL^2(\Omega))$ to the problem  
\begin{subnumcases}{\label{dernn1}}
	\B{w}_t = (\BA+\BP)\B{w}+\B{f}_\Phi(\B{h})(t,\B{w}),\label{dernna} \\[2mm]
	\B{w}(0) = 0.\label{dernnb} 
\end{subnumcases} 
Now, let $\BE_\Phi: \BU_0 \to C([0,T_c];\BL^2(\Omega))$ be the operator that maps each $\B{h} \in \BU_0$ to $\BE_\Phi(\B{h}) = \B{w}_\Phi(\B{h}) + \BB_1\B{h}$. The main theorem of this section establishes that $\BE_\Phi$ characterizes the derivative of $\BS.$ Before proving this result, we establish properties of $\BE_\Phi.$
\begin{proposition}
	For each $\Phi \in \BU_0^\eta$, the operator $\BE_\Phi$ is linear and belongs to $\calL(\BU_0,L^2(0,T_c;\BL^2(\Omega))).$
\end{proposition}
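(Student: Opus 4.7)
My approach splits into two essentially independent components: linearity and boundedness. Linearity will follow from the uniqueness of solutions to the abstract Cauchy problem \eqref{dernn1} (guaranteed by Proposition \ref{green22}) together with the observation that the forcing $\B{f}_\Phi(\B{h})$ decomposes into a part affine in $\B{w}$ with coefficient $\BFF(\tilde\Phi)$ \emph{independent} of $\B{h}$, plus a part that is linear in $\B{h}$. Concretely, given $\B{h}_1,\B{h}_2\in\BU_0$ and $\alpha,\beta\in\BR$, I would verify by direct substitution that $\alpha\B{w}_\Phi(\B{h}_1) + \beta\B{w}_\Phi(\B{h}_2)$ solves \eqref{dernn1} with right-hand side $\B{f}_\Phi(\alpha\B{h}_1+\beta\B{h}_2)$; uniqueness then identifies this sum with $\B{w}_\Phi(\alpha\B{h}_1+\beta\B{h}_2)$. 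Combined with the obvious linearity of $\BB_1$ from \eqref{bdop}--\eqref{defB}, this transfers to $\BE_\Phi$.

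For boundedness, I plan to estimate the two summands of $\BE_\Phi(\B{h}) = \B{w}_\Phi(\B{h}) + \BB_1\B{h}$ separately. The term $\|\BB_1\B{h}\|_{L^2(0,T_c;\BL^2(\Omega))}\lesssim \|\B{h}\|_\BU$ is immediate from Lemma \ref{regB}. For $\B{w}_\Phi(\B{h})$, I would exploit the Duhamel representation in terms of the semigroup $\{\BT(t)\}_{t\geq 0}$ generated by $\BA+\BP$ (see \eqref{semig}),
\begin{equation*}
\B{w}_\Phi(\B{h})(t) = \int_0^t \BT(t-s)\bigl[\BFF(\tilde\Phi)(s)\,\B{w}_\Phi(\B{h})(s) + (\BB+\BFF(\tilde\Phi)\BB_1)\B{h}(s)\bigr]\,ds.
\end{equation*}
From \eqref{Fineq} and the uniform bound on $\BS(\tilde\Phi)$ in $\BBX_{T_c}^\BCK(\BH^1(\Omega),\BL^2(\Omega))$ coming from Theorem \ref{mainwpp}, one has $\sup_t \|\BFF(\tilde\Phi)(t)\|_{\calL(\BL^2(\Omega))} \leq M$ for a constant $M$ depending only on the fixed data $\BCK,\B{v}_\mathrm{e},\kappa_\BU$. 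Taking $\BL^2$-norms, using the growth bound $\|\BT(t)\|\leq e^{\gamma t}$, and applying Gr\"onwall's inequality on $[0,T_c]$ yields
\begin{equation*}
\|\B{w}_\Phi(\B{h})\|_{C([0,T_c];\BL^2(\Omega))} \lesssim \|(\BB+\BFF(\tilde\Phi)\BB_1)\B{h}\|_{L^1(0,T_c;\BL^2(\Omega))} \lesssim \|\B{h}\|_\BU,
\end{equation*}
where the last inequality combines Lemma \ref{regB} with the operator bound on $\BFF(\tilde\Phi)$. Finally, the continuous embedding $C([0,T_c];\BL^2(\Omega)) \hookrightarrow L^2(0,T_c;\BL^2(\Omega))$ delivers the desired estimate for $\BE_\Phi$.

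I do not anticipate a serious obstacle here, as the argument is entirely linear once \eqref{Fineq} and Lemma \ref{regB} are in place. The only mildly delicate point worth double-checking is the uniformity in $t$ of the multiplication operator $\BFF(\tilde\Phi)(t)$ acting on $\BL^2(\Omega)$, but this is immediate from the pointwise structure of $\BFF$ and the fact that $\BS(\tilde\Phi)$ takes values in the suitable compact set $\BCK$, which keeps $p^k(\tilde\Phi)$ uniformly bounded away from zero.
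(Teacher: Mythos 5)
Your proposal is correct and follows essentially the same route as the paper: linearity via uniqueness of solutions to \eqref{dernn1} together with the affine-in-$\B{w}$, linear-in-$\B{h}$ structure of $\B{f}_\Phi(\B{h})$, and boundedness via the Duhamel formula, the operator bound \eqref{Fineq} on $\BFF(\tilde\Phi)$, Lemma \ref{regB}, and Gr\"onwall's inequality. No gaps to report.
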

\begin{proof}
	Linearity follows by linearity of $\BFF(\tilde\Phi)$ w.r.t $\B{h}$ along with the uniqueness of the solution to problem \eqref{dern1}. For continuity, first notice via semigroup theory that we have 
	\begin{align*}
		\|\B{w}_\Phi(\B{h})(t,\cdot)\|_{\BL^2(\Omega)} 
		&\leqslant \int_0^t \left\|\BT(t-\tau)\left[\BFF(\tilde\Phi)\B{w}_\Phi(\B{h})(\tau,\cdot) + (\BB + \BFF(\tilde\Phi)\BB_1)\B{h}(\tau,\cdot)\right]\right\|_{\BL^2(\Omega)}d\tau \\
		&\lesssim \|\BFF(\tilde\Phi)\|_{\calL(\BL^2(\Omega))}\int_0^{T_c}\left\|\B{w}_\Phi(\B{h})(\tau,\cdot)\right\|_{\BL^2(\Omega)}d\tau + (\|\BB\|+\|\BFF(\tilde\Phi)\|_{\calL(\BL^2(\Omega))}\|\BB_1\|)\|\B{h}\|_\BU,
	\end{align*} hence by Gr\"ownwall's inequality we have,
	\begin{align*}
		\|\BE_\Phi(\B{h})\|_{L^2(0,T_c;\BL^2(\Omega))}^2 
		= \int_0^{T_c} \|\B{w}_\Phi(\B{h})(t,\cdot) + \BB_1\B{h}(t,\cdot)\|_{\BL^2(\Omega)}^2dt 
		\lesssim \|\BB_1\|^2\|\B{h}\|_\BU^2 + C_{T_c} \|\B{h}\|_\BU^2 
		\lesssim \|\B{h}\|_{\BU}^2,
	\end{align*}
	which finishes the proof and establishes that $\|\BE_\Phi\|_{\calL(\BU_0,L^2(0,T_c;\BL^2(\Omega))} \lesssim C_{\BCK,T_c}$.
\end{proof}
\begin{proposition}\label{lipcontE}
	The map $\Phi \to \BE_\Phi$ is (Lipschitz) continuous from $\BU_0^\eta$ to $\calL(\BU_0,L^2(0,T_c;\BL^2(\Omega))).$
\end{proposition}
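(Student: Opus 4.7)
The plan is to reduce the statement to a perturbation estimate for the linear Cauchy problem \eqref{dernn1} driven by the $\Phi$-dependent data $\BFF(\tilde\Phi)$, and to control that perturbation by \eqref{Fineq2} together with the continuity of $\BE_\Psi$ established in the preceding proposition. Since $\BB_1$ does not depend on the control, observe first that for any $\B{h}\in\BU_0$ one has
$$\BE_\Phi(\B{h}) - \BE_\Psi(\B{h}) = \B{w}_\Phi(\B{h}) - \B{w}_\Psi(\B{h}) =: \B{d}(\B{h}),$$
so it suffices to estimate $\B{d}(\B{h})$ in $L^2(0,T_c;\BL^2(\Omega))$ by $\|\Phi-\Psi\|_\BU\,\|\B{h}\|_\BU$.

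Next I would subtract the two Cauchy problems satisfied by $\B{w}_\Phi(\B{h})$ and $\B{w}_\Psi(\B{h})$, adding and subtracting $\BFF(\tilde\Phi)\B{w}_\Psi(\B{h})$, to write $\B{d}(\B{h})$ as the classical (mild) solution of
\begin{subnumcases}{\label{plan_d}}
\B{d}_t = (\BA+\BP)\B{d} + \BFF(\tilde\Phi)\B{d} + (\BFF(\tilde\Phi)-\BFF(\tilde\Psi))\bigl(\B{w}_\Psi(\B{h}) + \BB_1\B{h}\bigr), \\[1mm]
\B{d}(0) = 0,
\end{subnumcases}
noting that the last bracket is precisely $\BE_\Psi(\B{h})$ and that $\tilde\Phi-\tilde\Psi = \Phi-\Psi$.

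Using the variation-of-constants formula for the semigroup $\{\BT(t)\}_{t\geqslant 0}$ generated by $\BA+\BP$, taking $\BL^2(\Omega)$-norms, and absorbing the bounded operator $\BFF(\tilde\Phi)$ into a Gr\"onwall inequality (its norm being uniformly bounded for $\Phi\in\BU_0^\eta$ by \eqref{Fineq} together with Theorem \ref{mainwpp}), I would obtain
$$\|\B{d}(\B{h})(t)\|_{\BL^2(\Omega)} \lesssim \|\BFF(\tilde\Phi)-\BFF(\tilde\Psi)\|_{\calL(\BL^2(\Omega))} \int_0^t \|\BE_\Psi(\B{h})(\tau)\|_{\BL^2(\Omega)}\,d\tau.$$
Invoking \eqref{Fineq2} and the previous proposition, which yields $\|\BE_\Psi(\B{h})\|_{L^2(0,T_c;\BL^2(\Omega))} \lesssim \|\B{h}\|_\BU$ with constant uniform in $\Psi\in\BU_0^\eta$, one concludes
$$\|\BE_\Phi(\B{h}) - \BE_\Psi(\B{h})\|_{L^2(0,T_c;\BL^2(\Omega))} \lesssim C_{\BCK,T_c}\, \|\Phi-\Psi\|_\BU\,\|\B{h}\|_\BU,$$
which, by taking the supremum over $\B{h}\in\BU_0$ with $\|\B{h}\|_\BU \leqslant 1$, is precisely the desired Lipschitz estimate.

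The proof is essentially mechanical once \eqref{plan_d} is isolated; the only delicate bookkeeping lies in checking that the constants depending on $\BFF(\tilde\Phi)$ admit a bound that is uniform over $\BU_0^\eta$. This follows because, for $\Phi\in\BU_0^\eta$, the state $\BS(\Phi)$ stays in $\BBX_{T_c}^\BCK(\BH^1(\Omega),\BL^2(\Omega))$, so $\|\BFF(\tilde\Phi)\|_{\calL(\BL^2(\Omega))}$ is bounded in terms of $\BCK$ and the structural constants $\kappa_t,\kappa_\B{x}$ of the invariant set only, which is exactly what makes the Gr\"onwall constant $\Phi$-independent.
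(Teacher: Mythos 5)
Your proof is correct and follows essentially the same route as the paper: both reduce the claim to a Gr\"onwall estimate for the Cauchy problem solved by $\B{w}_\Phi(\B{h})-\B{w}_\Psi(\B{h})$, bound the perturbation term via \eqref{Fineq2}, and invoke the uniform operator bound on $\BE_{(\cdot)}$ from the preceding proposition. The only (immaterial) difference is that you place the difference $\BFF(\tilde\Phi)-\BFF(\tilde\Psi)$ against $\BE_\Psi(\B{h})$ while the paper's display uses $\BE_\Phi(\B{h})$; your write-up is in fact more explicit about the equation \eqref{plan_d} satisfied by the difference and about why the Gr\"onwall constant is uniform over $\BU_0^\eta$.
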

\begin{proof}
	Let $\Phi,\Psi \in \BU_0^\eta.$ By Gr\"ownwall's inequality applied at the difference $\B{w}_\Phi(\B{h}) - \B{w}_\Psi(\B{h})$ one obtains
	\begin{align*}
		&\|\BE_\Phi - \BE_\Psi\|_{\calL(\BU_0,L^2(0,T_c;\BL^2(\Omega)))} 
		= \sup_{\B{h} \in \BU_0, \|\B{h}\|_\BU = 1} \|\BE_\Phi(\B{h}) - \BE_\Psi(\B{h})\|_{L^2(0,T_c;\BL^2(\Omega)))}\\
		& = \sup_{\B{h} \in \BU_0, \|\B{h}\|_\BU = 1}\ \|\B{w}_\Phi(\B{h}) - \B{w}_\Psi(\B{h})\|_{L^2(0,T_c;\BL^2(\Omega)))} \\
		& = \sup_{\B{h} \in \BU_0, \|\B{h}\|_\BU = 1} \sqrt{\int_0^{T_c}\|\B{w}_\Phi(\B{h})(t,\cdot) - \B{w}_\Psi(\B{h})(t,\cdot)\|_{\BL^2(\Omega)}^2dt} \\
		& \lesssim \sup_{\B{h} \in \BU_0, \|\B{h}\|_\BU = 1} \sqrt{\int_0^{T_c}\|\B{h}\|_\BU^2\|\|(\BFF(\tilde\Phi)-\BFF(\tilde\Psi))\BE_\Phi(\B{h})(t,\cdot)\|_{\BL^2(\Omega)}^2dt} \\
		& \lesssim \|\Phi-\Psi\|_\BU\sup_{\B{h} \in \BU_0, \|\B{h}\|_\BU = 1} \sqrt{\int_0^{T_c}\|\BE_\Phi(\B{h})(t,\cdot)\|_{\BL^2(\Omega)}^2dt} \\
		& \lesssim \|\Phi-\Psi\|_\BU\sup_{\B{h} \in \BU_0, \|\B{h}\|_\BU = 1} \|\BE_\Phi(\B{h})\|_{L^2(0,T_c;\BL^2(\Omega))} 
		 = \|\Phi-\Psi\|_\BU\|\BE_\Phi\|_{\calL(\BU_0,L^2(0,T_c;\BL^2(\Omega))}  \lesssim \|\Phi-\Psi\|_\BU,
	\end{align*}
	which finishes the proof.
\end{proof}

In the next theorem, we assume without loss of generality that $\BU_0^\eta \subset \BU^{\mathrm{op}} \subset \BU_0$ where $\BU^{\mathrm{op}}$ is an open set on which $\BS$ is still well defined. This set can, for example, be constructed by defining a new $\BU_0^\eta$ replacing $\eta$ by $ \eta - \varepsilon$ and $\kappa_\BU$ by $\kappa_\BU - \varepsilon$ for a very small, but fixed $\varepsilon>0$. We then define $\BU^\mathrm{op} := \phi^{-1}(\mathrm{int}(\BU_0^\eta)$, with $\BU_0^\eta$ here being the original set.

\begin{theorem}\label{diffe}
	The maps $\BS: \BU^{\mathrm{op}} \to L^2(0,T_c;\BL^2(\Omega))$ and $\BS: \BU^{\mathrm{op}} \to \BV_{T_c}$ are continuously G\^{a}teaux--differentiable. Moreover, for each $\Phi \in \BU^\mathrm{op}$ we have
	\begin{equation}\label{derivative}
		\BBS'(\Phi,\cdot)  = \BE_{\Phi}(\cdot).
	\end{equation}
\end{theorem}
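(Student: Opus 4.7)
The plan is to fix $\Phi \in \BU^\mathrm{op}$ and $\B{h} \in \BU_0$. Since $\BU^\mathrm{op}$ is open in $\BU_0$, there exists $s_0 > 0$ such that $\Phi + s\B{h} \in \BU_0^\eta$ for all $|s| < s_0$. Set $\tilde\Phi := \Phi + \Phi^\mathrm{e}$, $\tilde\Phi_s := \tilde\Phi + s\B{h}$, $\B{v}_s := \BBS(\tilde\Phi_s)$, $\B{v} := \BBS(\tilde\Phi)$ and $\B{u}_s := \B{v}_s - \BB_1\tilde\Phi_s$, $\B{u} := \B{v} - \BB_1\tilde\Phi$. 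Since $\B{h}(0) = 0$, we have $\B{u}_s(0) = \B{u}(0)$, so subtracting the two instances of \eqref{abs1} and dividing by $s$ yields that $\B{d}_s := s^{-1}(\B{u}_s - \B{u})$ satisfies
\begin{equation*}
(\B{d}_s)_t = (\BA + \BP)\B{d}_s + \BB\B{h} + s^{-1}\bigl(\BF(\B{v}_s) - \BF(\B{v})\bigr), \qquad \B{d}_s(0) = 0.
\end{equation*}

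The core linearization step is to write $\BF(\B{v}_s) - \BF(\B{v}) = \BFF(\tilde\Phi)(\B{v}_s - \B{v}) + R_s$, where $R_s$ collects the higher-order terms produced by the mean value theorem, and to use the identity $s^{-1}(\B{v}_s - \B{v}) = \B{d}_s + \BB_1\B{h}$. Substituting into the equation for $\B{d}_s$ and subtracting the equation \eqref{dernn1} satisfied by $\B{w} := \B{w}_\Phi(\B{h})$ gives, for the error $\B{z}_s := \B{d}_s - \B{w}$,
\begin{equation*}
(\B{z}_s)_t = (\BA + \BP)\B{z}_s + \BFF(\tilde\Phi)\B{z}_s + s^{-1}R_s, \qquad \B{z}_s(0) = 0.
\end{equation*}
Since $\BFF(\tilde\Phi) \in \calL(\BL^2(\Omega))$ by \eqref{Fineq}, a Gr\"onwall argument along the lines of Theorem \ref{linearu} yields $\|\B{z}_s\|_{C([0,T_c];\BL^2(\Omega))} \lesssim \|s^{-1}R_s\|_{L^1(0,T_c;\BL^2(\Omega))}$.

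The remainder is controlled as follows. Because $\B{v}$ and $\B{v}_s$ take values in the suitable compact $\BCK$ (where $p$ stays away from zero), the pointwise Jacobian of $\BBF$ is Lipschitz on $\BCK$; the integral form of the mean value theorem thus gives the pointwise--in--time bound $\|R_s\|_{C([0,T_c];\BL^2(\Omega))} \lesssim \|\B{v}_s - \B{v}\|_{C(\overline{Q}_{T_c})}\|\B{v}_s - \B{v}\|_{C([0,T_c];\BL^2(\Omega))}$. Combining this with the Lipschitz estimate $\|\B{v}_s - \B{v}\|_{\BBX_{T_c}(\BH^1(\Omega),\BL^2(\Omega))} \lesssim s$ of Lemma \ref{Lip} and the embedding \eqref{embed} yields $\|R_s\|_{C([0,T_c];\BL^2(\Omega))} \lesssim s^2$. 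Hence $s^{-1}R_s \to 0$ and $\B{d}_s \to \B{w}_\Phi(\B{h})$ in $C([0,T_c];\BL^2(\Omega))$, which through $s^{-1}(\BS(\Phi+s\B{h})-\BS(\Phi)) = \B{d}_s + \BB_1\B{h}$ establishes G\^ateaux differentiability into $L^2(0,T_c;\BL^2(\Omega))$ with derivative $\BE_\Phi$. Running the same argument at the $\BBX_{T_c}(\BH^1(\Omega),\BL^2(\Omega))$ level---using the time and space estimates \eqref{contest3n}--\eqref{contest2} together with \eqref{LipF1}--\eqref{LipF2}---and then invoking the continuous embedding $\BBX_{T_c}(\BH^1(\Omega),\BL^2(\Omega)) \hookrightarrow \BV_{T_c}$ gives the same conclusion with target $\BV_{T_c}$. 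Finally, continuity of $\Phi \mapsto \BE_\Phi$ is precisely Proposition \ref{lipcontE}, upgrading G\^ateaux differentiability to \emph{continuous} G\^ateaux differentiability.

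The main obstacle I anticipate is the remainder estimate for $s^{-1}R_s$. The scalar nonlinearity $q \mapsto q|q|$ is only $C^1$ at the origin (not $C^2$), so a pointwise second-order Taylor expansion is not directly available; one must instead rely on the Lipschitz continuity of the first-order Jacobian on $\BCK$---valid precisely because suitability of $\BCK$ excludes $p = 0$---combined with the Lipschitz estimate of Lemma \ref{Lip} to convert $\|\B{v}_s - \B{v}\|^2$ into the desired $O(s^2)$ bound.
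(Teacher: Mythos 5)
Your argument for the target $L^2(0,T_c;\BL^2(\Omega))$ is correct and takes a genuinely different route from the paper. The paper does not estimate a Taylor remainder at all: it uses the Lipschitz bound of Lemma \ref{Lip} only to get uniform boundedness of the difference quotients in $\BBX_{T_c}(\BH^1(\Omega),\BL^2(\Omega))$, extracts a strongly convergent subsequence via the compact embedding $\BBY_{T_c}^\infty(\BH^1(\Omega),\BL^2(\Omega))\hookrightarrow \BV_{T_c}$, passes to the limit in the variation-of-parameters formula to identify any subsequential limit with the \emph{unique} solution of \eqref{dernn1}, and then invokes Uryson's subsequence principle. Your direct decomposition $\BF(\B{v}_s)-\BF(\B{v})=\BFF(\tilde\Phi)(\B{v}_s-\B{v})+R_s$ with the bound $\|R_s\|_{C([0,T_c];\BL^2(\Omega))}\lesssim \|\B{v}_s-\B{v}\|^2\lesssim s^2$ (valid because the Jacobian of $\BBF$ is Lipschitz on the suitable compact $\BCK$, where $p$ is bounded away from zero and each $\BCK_k$ is convex so the mean-value segment stays in $\BCK_k$) is sound, and it buys more than the paper's argument: it is quantitative, avoids compactness and uniqueness of weak limits entirely, and in fact yields a remainder of order $O(s\|\B{h}\|_{\BU}^2)$, i.e.\ essentially a Fr\'echet-type estimate.

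There is, however, a genuine gap in the $\BV_{T_c}$ part. ``Running the same argument at the $\BBX_{T_c}(\BH^1(\Omega),\BL^2(\Omega))$ level'' via \eqref{contest3n}--\eqref{contest2} requires $\|s^{-1}D_tR_s\|_{L^1(0,T_c;\BL^2(\Omega))}\to 0$. Writing $D_tR_s=[D\BF(\B{v}_s)-D\BF(\B{v})]D_t\B{v}_s-[D_t(D\BF(\B{v}))](\B{v}_s-\B{v})$, each term is only $O(s)$ in $\BL^2(\Omega)$ (using the Lipschitz continuity of $D\BBF$ and the uniform bounds $\kappa_t$), so $s^{-1}D_tR_s$ is merely bounded; making it vanish would require a second-order expansion of $D\BBF$, which fails precisely because $q\mapsto q|q|$ is not $C^2$ at $q=0$ and $\BCK_k^{\mathrm q}$ may contain the origin --- the very obstruction you flag in your last paragraph, but which your workaround only resolves at the zeroth derivative level. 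The gap is repairable without new ideas: Lemma \ref{Lip} gives a uniform bound on $\B{d}_s$ in $C([0,T_c];\BH^1(\Omega))$, and interpolating this against your established convergence $\B{d}_s\to\B{w}_\Phi(\B{h})$ in $C([0,T_c];\BL^2(\Omega))$ via \eqref{emb_epsinter}--\eqref{emb_epsinf} (or, as the paper does, invoking the compact embedding into $\BV_{T_c}$) upgrades the convergence to $C([0,T_c];C(\overline\Omega))=\BV_{T_c}$. With that replacement, your proof is complete; the final step (continuity of $\Phi\mapsto\BE_\Phi$ from Proposition \ref{lipcontE}) matches the paper.
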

\begin{proof}
	This amounts to show that, given $\Phi \in \BU^\mathrm{op}$ and an arbitrary (but fixed) $\B{h} \in \BU_0$, the limit 
	\begin{equation}\label{derlim1}
		\lim\limits_{t \downarrow 0} \dfrac{\BS(\Phi+t\B{h})-\BS(\Phi)}{t}
	\end{equation}
	exists both in $L^2(0,T_c;\BL^2(\Omega)$ and $\BV_{T_c}$. Let $(t_n)$ be a sequence in $[0,1]$ such that $\Phi + t_n\B{h} \in \BU^{\mathrm{op}}$ and $t_n \downarrow 0.$ Consider a (non--relabelled) subsquence. Then it follows from Lipschitz continuity of $\BS$ (in $\BBX_{T_c}(\BH^1(\Omega),\BL^2(\Omega))$) that
	\begin{align*}
		\dfrac{1}{t_n} \|\BS(\Phi+t_n\B{h})-\BS(\Phi)\|_{\BBX_{T_c}(\BH^1(\Omega),\BL^2(\Omega))} 
		&\leqslant \dfrac{C_{T_c}}{t_n} \|t_n\B{h}\|_{\BU} = C_{T_c}\|\B{h}\|_\BU,
	\end{align*}
	rendering the subsequence $\left(\dfrac{\BS(\Phi+t_n\B{h})-\BS(\Phi)}{t_n}\right)_n$ uniformly (in $n$) bounded in $\BBX_{T_c}(\BH^1(\Omega),\BL^2(\Omega)).$ By compactness of the embedding $\BBY_{T_c}^\infty(\BH^1(\Omega),\BL^2(\Omega))\hookrightarrow \BV_{T_c}$ 
	it follows that, on a further (non--relabeled) subsequence, as $n \to \infty$ 
	\begin{equation}\label{strongS}
		\dfrac{\BS(\Phi+t_n\B{h})-\BS(\Phi)}{t_n} \to \overline{\B{v}}_\Phi(\B{h}) \in L^2(0,T_c;\BL^2(\Omega)) \cap \BV_{T_c}
	\end{equation} 
	in the strong sense, for some $\overline{\B{v}}_\Phi(\B{h}) \coloneqq ((\overline p^k_\Phi(\B{h}), \overline q^k_\Phi(\B{h})))\kk{k}$. We now show that $\overline{\B{v}}_\Phi(\B{h})$ is characterized uniquely by the solution of a PDE. 
	
	In fact, with reference to the map $\BF(\cdot)$ defined as $\BF(\cdot) = (\BBF^k(\cdot))\kk{k}$ (see \eqref{begF}) one can show by virtue of the strong convergence in \eqref{strongS} that, as $n \to \infty$, 
	\begin{equation}
		\dfrac{\BF(\BS(\Phi+t_n\B{h}))-\BF(\BS(\Phi))}{t_n} \to \BFF(\tilde\Phi)\overline{\B{v}}_\Phi(\B{h}), \qquad \tilde \Phi = \Phi + \Phi^\mathrm{e}
	\end{equation} 
	strongly in $L^2(0,T;\BL^2(\Omega))\cap \BV_{T_c}.$  Now, let $\B{u}_\Phi \coloneqq \BBS(\tilde\Phi) -\BB_1(\tilde\Phi)$ and notice that \eqref{abs1} implies that $$\B{w}_\Phi^n \coloneqq \dfrac{\B{u}_{\Phi+t_n\B{h}}-\B{u}_\Phi}{t_n}$$ solves 
	\begin{subnumcases}{\label{absder3}}
		{\B{w}_\Phi^n}_t = (\BA+\BP)\B{w}_\Phi^n + \dfrac{\BB(\tilde \Phi + t_n\B{h})-\BB(\tilde\Phi)}{t_n}+ \dfrac{\BF(\BBS(\tilde\Phi+t_n\B{h}))-\BF(\BBS(\tilde\Phi))}{t_n}, \label{lindera3} \\[2mm]
		{\B{w}_\Phi^n}(0) = -\dfrac{\BB_1(\tilde\Phi(0) + t_n\B{h}(0))-\BB_1(\tilde\Phi(0))}{t_n} = 0,\label{linderb3} 
	\end{subnumcases} 
	or equivalently, it satisfies the following implicit variation of parameters formula 
	\begin{align*}
		\B{w}_\Phi^n(t) & =  \int_0^t \BT(t-\tau)\left[ \dfrac{\BB(\tilde\Phi + t_n\B{h})-\BB(\tilde\Phi)}{t_n}+ \dfrac{\BF(\BBS(\tilde\Phi+t_n\B{h}))-\BF(\BBS(\tilde\Phi))}{t_n}\right](\tau)d\tau.
	\end{align*} 
	Then, as $n\to \infty$ above we have the following operator variation of parameter formula 
	\begin{align}\label{varder}
		\overline{\B{v}}_\Phi(\B{h}) - \BB_1\B{h} & =  \int_0^t\BT(t-\tau)\left[\BB\B{h} + \BFF(\tilde\Phi)\overline{\B{v}}_\Phi(\B{h})\right](\tau)d\tau.
	\end{align}
	But, again via semigroup theory, we see that \eqref{varder} implies that $\B{z}_\Phi(\B{h}) \coloneqq \overline{\B{v}}_\Phi(\B{h}) - \BB_1\B{h}$ solves 
	\begin{subnumcases}{\label{absder4}}
		{\B{z}_\Phi(\B{h})}_t = (\BA+\BP)\B{z}_\Phi(\B{h}) + \BB\B{h} + \BFF(\tilde\Phi)\overline{\B{v}}_\Phi({\B{h}}),\label{lindera4} \\[2mm]
		{\B{z}_\Phi(\B{h})}(0) = 0,\label{linderb4} 
	\end{subnumcases} 
	which can be rewritten equivalently as 
	\begin{subnumcases}{\label{absder5}}
		{\B{z}_\Phi(\B{h})}_t = (\BA+\BP)\B{z}_\Phi(\B{h}) + \B{f}_{\Phi}(\B{h}), \label{lindera5} \\[2mm]
		{\B{z}_\Phi(\B{h})}(0) = 0.\label{linderb5} 
	\end{subnumcases} Therefore, uniqueness of the solution to the problem \eqref{dern1} implies $\overline{\B{v}}_\Phi(\B{h}) - \BB_1\B{h} = \B{z}_\Phi \equiv \B{w}_{\Phi}(\B{h}),$ and hence $\overline{\B{v}}_\Phi(\B{h}) = \BE_{\Phi}(\B{h}).$
	
	We have then established the following:
	\begin{itemize}
			\item For any given subsequence of a given sequence $(t_n)$ in $[0,1]$ such that $\Phi + t_n\B{h} \in \BU^{\mathrm{op}}$ and $t_n \downarrow 0$ as $n \to \infty$ there exists a further subsequence such that $$\dfrac{\BS(\Phi+t_n\B{h})-\BS(\Phi)}{t_n} \to \overline{\B{v}}_\Phi({\B{h}}) \in L^2(0,T_c;\BL^2(\Omega)) \cap \BV_{T_c},$$ and this limit, in principle, depends on the chosen subsequence.  
			\item We shown, however, that $\overline{\B{v}}_\Phi(\B{h})$ is characterized uniquely by the \emph{unique} solution of a PDE, and therefore is also uniquely determined and does not depend of the sequence $(t_n).$ 
			\item Then it follows from the Uryson's subsequence principle that the limit 
			\begin{equation}\label{derlim}
				\lim\limits_{t_n \downarrow 0} \dfrac{\BS(\Phi+t_n \B{h})-\BS(\Phi)}{t_n}
			\end{equation}exists in $L^2(0,T_c;\BL^2(\Omega)) \cap \BV_{T_c}$ and coincide with $\BE_{\Phi}(\B{h}).$
			\item Putting everything together we conclude that $\BS$ is Gateaux differentiable on $\BU^\mathrm{op}$ and that for each $\Phi \in \BU^\mathrm{op}$, $\BS'(\Phi,\cdot) \in \calL(\BU_0,L^2(0,T_c;\BL^2(\Omega)) \cap \BV_{T_c})$ is given by $$\BS'(\Phi,\cdot) = \BE_{\Phi}(\cdot)$$ which we know is linear and bounded and, due to Lemma \ref{lipcontE}, also (Lipschitz) continuous. 
	\end{itemize}
	
	Therefore, $\BS$ is continuously Gateaux differentiable with the same derivative formula.
\end{proof}

Now, for $\Phi \in \BU^{\mathrm{ad}}$ and $\BCK$ a suitable compact set define $\BFF_\square(\Phi) \coloneqq \BFF(\Phi)^\ast$ when the latter is taken as a (bounded) operator from $\BL^2(\Omega)$ to itself. Similar to $\BFF(\Phi)$ in the previous section, a straightforward computation yields that $$\BFF_\square(\Phi) \in \calL\left(\BV_{T_c}\right) \cap \calL\left(\BL^2(\Omega)\right)$$, and the inequalities 
\begin{equation}\label{Fineqn} 
	\|\BFF_\square(\Phi)\|_{\calL\left(\BV_{T_c}\right)} +\|\BFF_\square(\Phi)\|_{\calL\left(\BL^2(\Omega)\right)} \leqslant C_\BCK\|\BS(\Phi)\|_{\BBX_{T_c}(\BH^1(\Omega),\BL^2(\Omega))}
\end{equation}
and 
\begin{equation}\label{Fineq2n}
	\|\BFF_\square(\Phi)-\BFF_\square(\Psi)\|_{\calL\left(\BV_{T_c}\right)}+\|\BFF_\square(\Phi)-\BFF_\square(\Psi)\|_{\calL\left(\BL^2(\Omega)\right)} \leqslant C_\BCK\|\Phi-\Psi\|_{\BU} 
\end{equation}
hold for all $\Phi,\Psi \in \BU^{\mathrm{ad}}.$ Moreover, for every $\B{u},\B{v} \in L^2(0,T_c;\BL^2(\Omega))$ we have 
\begin{equation}\label{adj11}
	(\BFF_\square(\Phi)\B{u},\B{v})_{L^2(0,T_c;\BL^2(\Omega))} = (\B{u},\BFF(\Phi)\B{v})_{L^2(0,T_c;\BL^2(\Omega))}.
\end{equation}
As a result, for a given $\Phi \in \BU_0^\eta$ and $\B{h} \in \BU_0$, we define the affine map $\B{g}_\Phi: [0,T_c] \times \BL^2(\Omega) \to \BL^2(\Omega)$ as $\B{g}_\Phi(t,\B{p}) \coloneqq \BFF_\square(\tilde\Phi)\B{p} + \BS(\Phi) - \B{v}_d$ and notice that, by the regularity of the operators involved in its definition, we have that $\B{g}_\Phi$ is continuous in the first variable and (uniformly) Lipschitz continuous w.r.t the second variable. It then follows from Proposition \ref{green22} that there exists a unique solution $\B{p} \coloneqq \B{p}_\Phi \in C([0,T_c];\BL^2(\Omega))$ to the problem
\begin{subnumcases}{\label{ddern6}}
	-\B{p}_t = (-\BA+\BP^\ast)\B{p}+\B{g}_\Phi(t,\B{p}), \label{ddern7} \\[2mm]
	\B{p}(T) = 0.\label{ddern8} 
\end{subnumcases} 
The next corollary characterizes the derivative $J'(\Phi)$ using the above function $\B{p}_\Phi$, hereafter called the \emph{adjoint} state.
\begin{corollary}\label{gradient}
	Let $\Phi \in \BU_0^\eta$ and $\BCK$ be a suitable compact set. With $\B{p}_\Phi \in C([0,T_c];\BL^2(\Omega))$ denoting the adjoint state, the following formula holds
	\begin{equation}\label{jgrad}
		J'(\Phi) = (\BB^\ast + \BB_1^\ast\BFF_\square(\tilde\Phi))\B{p}_\Phi - \BB_1^\ast(\BS(\Phi) - \B{v}_d) + \sigma \Phi,
	\end{equation}
	where the adjoints $\BB^\ast$ and $\BB_1$ are taken when $\BB, \BB_1$ are considered as bounded operators from $\BU^{\mathrm{ad}}$ to $L^2(0,T_c;\BL^2(\Omega)).$ 
\end{corollary}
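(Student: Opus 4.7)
The plan is to apply the chain rule to the reduced cost and then convert the state-sensitivity term into an expression in terms of the adjoint state $\B{p}_\Phi$ via the Green-type identity of Proposition \ref{green22}. Concretely, for $\Phi \in \BU_0^\eta$ and an arbitrary direction $\B{h} \in \BU_0$, I would first combine Theorem \ref{diffe} with the differentiability of the quadratic data-fit and regularization terms to write
\[
J'(\Phi)[\B{h}] \;=\; (\BS(\Phi)-\B{v}_d,\,\B{w}_\Phi(\B{h})+\BB_1\B{h})_{L^2(0,T_c;\BL^2(\Omega))} + \sigma(\Phi,\B{h})_\BU,
\]
and split the inner product into the contribution coming from $\B{w}_\Phi(\B{h})$ and the one coming from $\BB_1\B{h}$. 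The $\BB_1\B{h}$ piece is transported into $\BU$ at once using the Hilbert-space adjoint $\BB_1^\ast$ of $\BB_1:\BU\to L^2(0,T_c;\BL^2(\Omega))$, producing the $\BB_1^\ast(\BS(\Phi)-\B{v}_d)$ term in the claimed formula.

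For the $\B{w}_\Phi(\B{h})$ piece, I would invoke Proposition \ref{green22} with $\B{f}=\B{f}_\Phi(\B{h})$ and $\B{g}=\B{g}_\Phi$, which gives
\[
\int_0^{T_c} (\BFF_\square(\tilde\Phi)\B{p}_\Phi + \BS(\Phi)-\B{v}_d,\,\B{w}_\Phi(\B{h}))_{\BL^2(\Omega)}\,dt \;=\; \int_0^{T_c} (\B{p}_\Phi,\,\BFF(\tilde\Phi)\B{w}_\Phi(\B{h}) + (\BB+\BFF(\tilde\Phi)\BB_1)\B{h})_{\BL^2(\Omega)}\,dt.
\]
The decisive cancellation then comes from the pointwise-in-time adjoint identity \eqref{adj11}: the $\BFF_\square(\tilde\Phi)\B{p}_\Phi$--$\B{w}_\Phi(\B{h})$ term on the left matches the $\B{p}_\Phi$--$\BFF(\tilde\Phi)\B{w}_\Phi(\B{h})$ term on the right, so both drop out, leaving
\[
(\BS(\Phi)-\B{v}_d,\,\B{w}_\Phi(\B{h}))_{L^2(0,T_c;\BL^2(\Omega))} \;=\; ((\BB^\ast + \BB_1^\ast\BFF_\square(\tilde\Phi))\B{p}_\Phi,\,\B{h})_\BU,
\]
after identifying $\BB^\ast, \BB_1^\ast$ as Hilbert adjoints into $\BU$. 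Summing this with the $\BB_1^\ast(\BS(\Phi)-\B{v}_d)$ contribution and the $\sigma(\Phi,\B{h})_\BU$ term, and using that $\B{h}\in\BU_0$ was arbitrary, yields the Riesz representative of $J'(\Phi)$ announced in the statement.

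The algebra above is routine; the main thing that needs careful checking is the admissibility of Proposition \ref{green22}. Both $\B{f}_\Phi(\B{h})$ and $\B{g}_\Phi$ are affine in the state variable, and the bounds \eqref{Fineq} and \eqref{Fineqn}, together with the boundedness of $\BB,\BB_1$ from Lemma \ref{regB} and the fact that $\BS(\Phi)\in C([0,T_c];\BL^2(\Omega))$, provide precisely the continuity in $t$ and uniform-in-$t$ Lipschitz continuity in the second variable that the proposition requires. A secondary point is to confirm that $\BFF_\square(\tilde\Phi)$ really acts as the $L^2(0,T_c;\BL^2(\Omega))$-adjoint of $\BFF(\tilde\Phi)$, which is exactly the content of \eqref{adj11} and is what drives the cancellation enabling the adjoint representation.
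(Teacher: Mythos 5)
Your argument follows the paper's proof essentially step for step: differentiate the reduced cost via Theorem \ref{diffe}, split $\BS'(\Phi)\B{h}=\B{w}_\Phi(\B{h})+\BB_1\B{h}$, transport the $\BB_1\B{h}$ contribution into $\BU$ by the Hilbert-space adjoint, and convert the $\B{w}_\Phi(\B{h})$ contribution via the Green-type identity \eqref{green222} combined with \eqref{adj11}; the admissibility checks you list for Proposition \ref{green22} are exactly the right ones. One point you should not gloss over, however: carried out literally, your transport of the $\BB_1\B{h}$ piece gives
\[
(\BS(\Phi)-\B{v}_d,\,\BB_1\B{h})_{L^2(0,T_c;\BL^2(\Omega))}=+\,(\BB_1^\ast(\BS(\Phi)-\B{v}_d),\B{h})_\BU,
\]
i.e.\ a \emph{plus} sign, whereas the formula you claim to recover (and the paper's own chain of equalities, which silently flips this sign at its second step) carries $-\BB_1^\ast(\BS(\Phi)-\B{v}_d)$. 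Your computation is the correct one, so the honest conclusion of your argument is the representative $(\BB^\ast+\BB_1^\ast\BFF_\square(\tilde\Phi))\B{p}_\Phi+\BB_1^\ast(\BS(\Phi)-\B{v}_d)+\sigma\Phi$; you should flag this discrepancy rather than assert agreement with the printed sign. Apart from that bookkeeping issue, the proof is complete and coincides with the paper's route.
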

\begin{proof}
	Recall that $\BS'(\Phi,\B{h}) = \BE_\Phi(\B{h}) = \B{w}_\Phi(\B{h}) + \BB_1\B{h}$. This along with the Green-type formula \eqref{green222} and \eqref{adj11} yields
	\begin{align*} 
		(J'(\Phi),\B{h})_\BU 
		& = (\BS(\Phi) - \B{v}_d,\BS'(\Phi)\B{h})_{L^2(0,T_c;\BL^2(\Omega))}+ \sigma(\Phi,\B{h})_{\BU} \\
		& = (\B{g}_\Phi(t,\B{p}_\Phi) - \BFF_\square(\tilde\Phi)\B{p}_\Phi, \B{w}_\Phi(\B{h}) )_{L^2(0,T_c;\BL^2(\Omega))} - (\BB_1^\ast(\BS(\Phi) - \B{v}_d) - \sigma \Phi,\B{h})_{\BU} \\
		& = (\B{p}_\Phi,\B{f}_\Phi(\B{h})(t,\B{w}_\Phi(\B{h}))-\BFF(\tilde\Phi)\B{w}_\Phi(\B{h}) )_{L^2(0,T_c;\BL^2(\Omega))}- (\BB_1^\ast(\BS(\Phi) - \B{v}_d) - \sigma \Phi,\B{h})_{\BU} \\
		& = (\B{p}_\Phi,(\BB + \BFF(\tilde\Phi)\BB_1)\B{h})_{L^2(0,T_c;\BL^2(\Omega))}- (\BB_1^\ast(\BS(\Phi) - \B{v}_d) - \sigma \Phi,\B{h})_{\BU} \\
		& = ((\BB^\ast + \BB_1^\ast\BFF_\square(\tilde\Phi))\B{p}_\Phi - \BB_1^\ast(\BS(\Phi) - \B{v}_d) + \sigma \Phi,\B{h})_{\BU},
	\end{align*}
	for all $\B{h} \in \BU_0.$	
\end{proof}

\bibliographystyle{abbrvurl} 
\bibliography{ref.bib}	
\end{document}